\patchcmd{\subsection}{-.5em}{.5em}{}{}
\patchcmd{\subsubsection}{-.5em}{.5em}{}{}
\numberwithin{equation}{section}
\newcommand{\cA}{\mathcal{A}}
\newcommand{\cC}{\mathcal{C}}
\newcommand{\cD}{\mathcal{D}}
\newcommand{\cL}{\mathcal{L}}
\newcommand{\cN}{\mathcal{N}}
\newcommand{\cP}{\mathcal{P}}
\newcommand{\cS}{\mathcal{S}}
\newcommand{\bA}{\mathbb{A}}
\newcommand{\bG}{\mathbb{G}}
\newcommand{\bH}{\mathbb{H}}
\newcommand{\bK}{\mathbb{K}}
\newcommand{\bN}{\mathbb{N}}
\newcommand{\bQ}{\mathbb{Q}}
\newcommand{\bR}{\mathbb{R}}
\newcommand{\bZ}{\mathbb{Z}}
\newcommand{\gog}{\mathfrak{g}}
\newcommand{\gol}{\mathfrak{l}}
\newcommand{\ra}{\rightarrow}
\newcommand{\qand}{\quad \textrm{and} \quad}
\def\acts{\curvearrowright}
\newcommand\subsetsim{\mathrel{%
\ooalign{\raise0.2ex\hbox{$\subset$}\cr\hidewidth\raise-0.8ex\hbox{\scalebox{0.9}{$\sim$}}\hidewidth\cr}}}
\newcommand{\eps}{\varepsilon}
\DeclareMathOperator{\dist}{dist}
\DeclareMathOperator{\Ad}{Ad}
\DeclareMathOperator{\Lip}{Lip}
\DeclareMathOperator{\supp}{supp}
\DeclareMathOperator{\id}{id}
\DeclareMathOperator{\Vol}{Vol}
\theoremstyle{theorem}
\newtheorem{theorem}{Theorem}[section]
\newtheorem{corollary}[theorem]{Corollary}
\newtheorem{proposition}[theorem]{Proposition}
\newtheorem{lemma}[theorem]{Lemma}
\theoremstyle{definition}
\newtheorem{remark}[theorem]{Remark}
\begin{document}

\title{Quantitative multiple mixing}

\author{Michael Bj\"orklund}
\address{Department of Mathematics, Chalmers, Gothenburg, Sweden}
\email{micbjo@chalmers.se}

\author{Manfred Einsiedler}
\address{Department of Mathematics, ETH, Z\"urich, Switzerland}
\email{manfred.einsiedler@math.ethz.ch}

\author{Alexander Gorodnik}
\address{School of Mathematics, University of Bristol, Bristol, UK}
\email{a.gorodnik@bristol.ac.uk}



\date{}


\maketitle

\begin{abstract}
We develop a method for providing quantitative estimates for higher order correlations
 of group actions. In particular, we establish effective mixing of all orders for 
 actions of semisimple Lie groups as well as semisimple $S$-algebraic groups and semisimple adele groups.
 As an application, we deduce existence of approximate configurations in lattices
 of semisimple groups.
\end{abstract}

\section{Introduction}

The aim of this paper is to investigate behaviour of higher order correlations
for group actions.
Let us consider a measure-preserving action of a locally compact group $G$ on a probability measure space $(X,m)$. Given a test-function $\phi\in \cL^\infty(X)$,
we obtain a family of functions on $X$
$$
g\cdot \phi: x\mapsto \phi(g^{-1}\cdot x),\quad g\in G,
$$
generated by the group action. 
We may think about $\{g\cdot \phi:\, g\in G\}$ as a collection of random variables
on $(X,m)$. For chaotic group actions, it is natural to expect that these
random variables are asymptotically independent.
The independence property is measured by the correlations of the form
$$
m((g_1 \cdot \phi) \cdots (g_k \cdot \phi))=\int_X \phi(g_1^{-1}\cdot x)\cdots \phi(g_k^{-1}\cdot x)\, dm(x),
$$
where $g_1,\ldots, g_k\in G$.
We say that the action is \emph{mixing of order $k$} if for every 
$\phi_1,\ldots, \phi_k\in \cL^\infty(X)$,
$$
m((g_1 \cdot \phi_1) \cdots (g_k \cdot \phi_k))\longrightarrow 
m(\phi_1) \cdots m(\phi_k)
$$
as $g_i^{-1}g_j\to \infty$ in $G$ for every $i\ne j$.
It is a difficult problem in general to establish mixing of higher order.
It is not known, for instance, whether for $\bZ$-actions mixing of order two
implies mixing of order three, and there are examples of $\bZ^2$-actions which are mixing of order two, but not mixing of order three (see \cite{led}).
In this paper we develop a method which allows to obtain quantitative estimates on correlations of order $k$ inductively assuming only information about correlations of order two. While our interest is mostly in actions of semisimple Lie groups
and semisimple algebraic groups, it will apparent from the proof that the developed
method can be 
potentially applied more generally provided that there is a collection of one-parameter subgroups which satisfies certain regularity, growth, and mixing assumptions.

The multiple mixing property has been extensively studied for 
flows on homogeneous spaces of the form 
$X=\Gamma \backslash L$, where $L$ is a connected Lie group,
and $\Gamma$ is a lattice subgroup of $L$. 
We consider the left action of $L$ on $X$ defined by 
\begin{equation}
\label{eq:action}
l\cdot x=xl^{-1}\quad\hbox{ for $l\in L$ and $x\in X$.}
\end{equation}
It follows from the work of Dani \cite{dani1,dani2}
that under mild assumptions, any partially hyperbolic one-parameter flow on the space $X$
satisfies the Kolmogorov property, so that it is mixing of all orders.
It was conjectured by Sinai in \cite{sinai} that the horocycle flow is also mixing of all orders. Although mixing of order two for the horocycle flow is relatively easy to prove using representation-theoretic techniques (see \cite{para}), Sinai's conjecture was proved in full generality  only much later by Marcus in \cite{marcus}.
A strikingly general result about mutiple mixing was established by
Mozes in \cite{mozes}.
He shows that for arbitrary measure-preserving actions of a connected Lie group $G$,
mixing of order two implies mixing of all orders provided that the group $G$ is $\hbox{Ad}$-proper (namely, it has finite centre, and its image under the adjoint
map into the group $\hbox{Aut}(\hbox{Lie}(G))$ is closed). This, in particular,
applies to connected semisimple Lie group with finite centre.
Using Ratner's measure classification, Starkov in \cite{starkov}
proved mixing of all orders for general mixing one-parameter flows on
finite-volume homogeneous spaces.

Although quantitative estimates for the correlations of order two have substantial history,  it seems that there has been very little known regarding correlations of higher order. 
We intend to remedy this gap in the present paper.
We note that analysis of higher order correlations 
arises naturally in many combinatorial, arithmetic, and probabilistic problems.
In Section \ref{sec:conf}, we use our results to deduce existence of approximate configurations in lattice subgroups. We also apply our results 
in the forthcoming works \cite{BEG} and \cite{BG}
to establish quantitative estimates on the number rational points lying on compactifications of certain homogeneous algebraic varieties,
and to derive limit theorems describing fine statistical properties
of group actions.

\subsection{Semisimple Lie groups}\label{s:ss}

Let $G$ be a connected semisimple Lie group with finite centre. We observe that given a measure-preserving action of $G$
on a probability space $(X,m)$, the correlations of order two can be interpreted as matrix
coefficients of the corresponding unitary representation of $G$ on $\cL^2(X)$.
Starting with the research programme of Harish-Chandra (summarised in the monographs \cite{war1,war2}), properties of 
matrix coefficients for representations of semisimple Lie groups have
been extensively studied. In particular, we mention important works 
of Borel and Wallach \cite{BW}, Cowling \cite{cowling}, Howe \cite{h}, 
Li and Zhu \cite{li,li2}, Moore \cite{m}, and  Oh \cite{oh1,oh2}
that provide explicit estimates on matrix coefficients for semisimple groups.
We formulate the main estimate coming for these works that will be a starting point of our investigation. 
We fix a left-invariant Riemannian metric $\rho_G$ on $G$ which is bi-invariant 
under a fixed maximal compact subgroup $K$ of $G$.
Let $\pi:G\to \mathcal{U}(H)$ be a unitary representation of $G$.
We say that $\pi$ has \emph{strong spectral gap}
if the restriction of $\pi$ to every noncompact simple factor of $G$ is isolated from the trivial representation with respect to the Fell topology on the dual space.
For every representation $\pi$ with the strong spectral gap, 
there 
exist $C,\delta>0$ such that 
for every $K$-finite vectors $v_1,v_2\in H$,
\begin{equation}\label{eq:exp_mix0}
\left<\pi(g)v_1,v_2\right>\le C\, e^{-\delta\, \rho_G(g,e)}\,\, \cN(v_1) \cN(v_2),
\end{equation}
where $\cN(v)=(\dim \left<Kv\right>)^{1/2} \|v\|$.
It is important for applications to have an analogue of
the estimate \eqref{eq:exp_mix0} which is valid for all smooth vectors in $H$.
It was observed by Katok and Spatzier in \cite{KS}
that under the strong spectral gap assumption, 
there exists $\delta>0$ such that
for all sufficiently large integers $d$
and arbitrary smooth vectors $v_1,v_2\in H$,
\begin{equation}\label{eq:exp_mix0_sobolev}
\left<\pi(g)v_1,v_2\right>\ll_d\, e^{-\delta\, \rho_G(g,e)}\,\, \|\cC_G^d v_1\| \|\cC_G^d v_2\|,
\end{equation}
where $\cC_G$ denotes the Casimir differential operator for $G$.

Now we suppose that the group $G$ is a closed subgroup 
of a connected Lie group $L$. Let $\Gamma$ be a lattice subgroup in $L$
and $X=\Gamma\backslash L$ equipped with the invariant probability measure $m$.
We consider the left action of $G$ on $X$
defined by \eqref{eq:action}.
We say that this action has \emph{strong spectral gap}
if the corresponding unitary representation of $G$ on $\cL^2_0(X)$
has strong spectral gap.
If this is the case, then the estimate \eqref{eq:exp_mix0_sobolev}
implies, in particular, that there exists $\delta>0$ such that
for all sufficiently large $d$, functions $\phi_1,\phi_2\in\cC_c^\infty(X)$, and an element $g\in G$,
\begin{equation}\label{eq:exp_mix}
| m((g \cdot \phi_1) \phi_2) - m(\phi_1) m(\phi_2) |
\ll_d
e^{-\delta \, \rho_G(g,e)} \,\, 
\|\cC_G^d \phi_1\|_2\, \|\cC_G^d \phi_2\|_2.
\end{equation}

Our first main result gives quantitative estimate on correlations
of arbitrary order for semisimple Lie groups generalising \eqref{eq:exp_mix}.
We formulate this estimate in terms of the Sobolev norms introduced in Section \ref{sec:sobolev} below.

\begin{theorem}[Exponential mixing of all orders for Lie groups]
\label{main1}
Let $L$ be a connected Lie group, $\Gamma$ a lattice subgroup of $L$,
and $X=\Gamma\backslash L$ equipped with the invariant probability measure $m$.
Let $G$ be a connected semisimple Lie subgroup of $L$ with finite center. We assume that the action of $G$ on $X$ has strong spectral gap.

Then, for every $k \geq 2$ and sufficiently large $d$, there exists $\delta=\delta(k,d) > 0$ such that for all functions $\phi_1,\ldots,\phi_k \in\cC_c^\infty(X)$ and elements $g_1,\ldots,g_k \in G$,
we have
\[
| m((g_1 \cdot \phi_1) \cdots (g_k \cdot \phi_k)) - m(\phi_1) \cdots m(\phi_k) |
\ll_{d,k}
\mathfrak{M}(g_1,\ldots,g_k)^{-\delta} \,\, \cS_d(\phi_1) \cdots \cS_d(\phi_k),
\]
where
$$
\mathfrak{M}(g_1,\ldots,g_k):=\exp\left(\min_{i\ne j}\rho_G(g_i,g_j)\right).
$$
\end{theorem}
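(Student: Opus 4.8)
The plan is to argue by induction on $k$, the base case $k=2$ being the input estimate \eqref{eq:exp_mix}. For the inductive step, suppose the bound holds for all orders $<k$. Given $g_1,\ldots,g_k\in G$, relabel so that, say, $\rho_G(g_{k-1},g_k)$ is comparable to the minimal pairwise distance (up to a bounded factor, after possibly separating the ``close cluster'' from the rest). The key idea is a \emph{splitting} trick: one isolates the pair of indices realizing (or nearly realizing) the minimum, writes the product $(g_{k-1}\cdot\phi_{k-1})(g_k\cdot\phi_k)$ as a single function $\psi$, and wants to decouple the remaining factors. But a naive grouping loses all decay involving the grouped pair, so instead one inserts the order-two estimate \emph{inside} the integral: after translating by $g_k^{-1}$ one studies
\[
\int_X \phi_k \cdot \bigl(g_k^{-1}g_{k-1}\cdot\phi_{k-1}\bigr)\cdot \prod_{i<k-1}\bigl(g_k^{-1}g_i\cdot\phi_i\bigr)\,dm,
\]
and approximates $\phi_k\cdot(g_k^{-1}g_{k-1}\cdot\phi_{k-1})$ by its mean $m(\phi_k)m(\phi_{k-1})$ plus an error controlled by \eqref{eq:exp_mix} with decay $e^{-\delta\rho_G(g_{k-1},g_k)}$, then apply the induction hypothesis to the remaining $k-1$ factors (with one factor being the product of $\phi_k\cdot(g_k^{-1}g_{k-1}\cdot\phi_{k-1})$ against the others). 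This naive approach fails, however, because the remaining gaps $\rho_G(g_i,g_j)$ for $i,j<k-1$ may be far \emph{smaller} than $\rho_G(g_{k-1},g_k)$, so the induction hypothesis on those factors gives no useful decay; the whole difficulty is that ``minimum distance is large'' does not propagate well under grouping.

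So the real strategy must be a \emph{one-parameter-subgroup} decomposition, matching the phrase in the introduction about ``a collection of one-parameter subgroups which satisfies certain regularity, growth, and mixing assumptions.'' Concretely: write each $g_i$ in a $KA^+K$ (Cartan) decomposition, so that the separation $\rho_G(g_i,g_j)$ is governed by the difference of the $A^+$-components together with the $K$-parts. One then wants to find, for the pair $(i_0,j_0)$ realizing the minimum (equivalently the maximum of $\mathfrak M$), a single one-parameter subgroup direction along which $g_{i_0}^{-1}g_{j_0}$ is driven to infinity at a rate $\gtrsim \rho_G(g_{i_0},g_{j_0})$, and along which one can run the two-variable mixing estimate while keeping the other factors as ``frozen'' smooth vectors. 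The machinery to make this uniform is: (i) a Sobolev-norm calculus showing that $\cS_d$ is sub-multiplicative up to constants, i.e. $\cS_{d}(\phi\,\psi)\ll \cS_{d'}(\phi)\,\cS_{d'}(\psi)$ for suitable $d'$, and equivariant under the $G$-action with polynomially controlled distortion, $\cS_d(g\cdot\phi)\ll (1+\rho_G(g,e))^{N}\cS_d(\phi)$; (ii) the exponential estimate \eqref{eq:exp_mix0_sobolev}/\eqref{eq:exp_mix} applied along the chosen direction; (iii) an elementary combinatorial/geometric lemma on $A^+$ ensuring that after removing the extremal pair and re-centering, the induction can be applied to a configuration whose $\mathfrak M$ is still bounded below by (a power of) the original $\mathfrak M$. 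Step (iii) — controlling how the minimal gap of the reduced configuration compares to the original minimal gap, uniformly in all the group elements — is where I expect the main obstacle to lie; it forces one to be careful about \emph{which} pair to split off (not necessarily the exact minimizer, but one chosen so that the residual configuration retains enough spread), and likely requires a pigeonhole argument over the finitely many Weyl chambers / orderings of the $A^+$-components.

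Granting that geometric lemma, the inductive step goes through as follows. Fix $k$ and the configuration $(g_1,\dots,g_k)$; by the lemma choose an index, say $k$, and a direction so that $\rho_G(g_k, g_i) \gtrsim \log \mathfrak M(g_1,\dots,g_k)$ for all $i\ne k$ — i.e. $g_k$ is uniformly far from \emph{every} other $g_i$. (If no such ``lonely'' index exists, the lemma instead produces a \emph{partition} into two clusters each of diameter $\lesssim$ the cross-distance, reducing to two sub-configurations handled by induction, glued by one application of \eqref{eq:exp_mix} across the clusters.) Translating by $g_k^{-1}$ and writing $h_i=g_k^{-1}g_i$ (so $\rho_G(h_i,e)\gtrsim\log\mathfrak M$ for $i<k$), expand
\[
m\!\left(\phi_k\cdot\prod_{i<k} h_i\cdot\phi_i\right)
= m(\phi_k)\, m\!\left(\prod_{i<k} h_i\cdot\phi_i\right) + \mathrm{Error},
\]
where $\mathrm{Error}$ is bounded using \eqref{eq:exp_mix} (with $\phi_1\leftrightarrow\phi_k$ and $\phi_2\leftrightarrow\prod_{i<k}h_i\cdot\phi_i$, whose Sobolev norm is controlled by the sub-multiplicativity and equivariance of part (i), at the cost of polynomial factors in $\rho_G(h_i,e)$ that are absorbed by trading a little of the exponential decay) giving a bound $\ll \mathfrak M^{-\delta_0}\prod_i\cS_{d}(\phi_i)$. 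The main term $m(\phi_k)\,m(\prod_{i<k}h_i\cdot\phi_i)$ is then treated by the induction hypothesis for order $k-1$ applied to $(h_1,\dots,h_{k-1})$: its deviation from $m(\phi_k)\prod_{i<k}m(\phi_i)=\prod_{i\le k}m(\phi_i)$ is $\ll \mathfrak M(h_1,\dots,h_{k-1})^{-\delta_1}\prod_i\cS_d(\phi_i)$, and by the geometric lemma $\mathfrak M(h_1,\dots,h_{k-1})\gtrsim\mathfrak M(g_1,\dots,g_k)^{c}$ for some $c=c(k)>0$. Collecting, the total error is $\ll \mathfrak M(g_1,\dots,g_k)^{-\delta}\prod_{i\le k}\cS_d(\phi_i)$ with $\delta=\delta(k,d)=\min(\delta_0,\,c\,\delta_1)>0$, which closes the induction. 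The only non-routine inputs are thus the Sobolev calculus of part (i) (standard but requires the specific norms $\cS_d$ of Section \ref{sec:sobolev}) and the combinatorial geometry of part (iii); everything else is bookkeeping of constants and exponents.
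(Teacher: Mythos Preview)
Your proposal has a concrete error that breaks the inductive step. In part (i) you assert the equivariance bound $\cS_d(g\cdot\phi)\ll (1+\rho_G(g,e))^{N}\cS_d(\phi)$, i.e.\ polynomial distortion in the Riemannian distance. This is false: property N3 of the Sobolev norms gives $\cS_d(g\cdot\phi)\ll \|g\|_{\rm op}^\sigma\,\cS_d(\phi)$, and by Lemma~\ref{lemma1_outline_main1}(iii) one has $\|g\|_{\rm op}\asymp e^{c\,\rho_G(g,e)}$, so the distortion is \emph{exponential} in $\rho_G(g,e)$. Consequently, when you form the grouped function $\tilde\Psi=\phi_j\cdot\prod_{i\neq j,k}(h_j^{-1}h_i)\cdot\phi_i$ and apply \eqref{eq:exp_mix} with $g=h_j^{-1}$, the bound you get is of order
\[
e^{-\delta_2\,\rho_G(g_k,g_j)}\;\cS_d(\tilde\Psi)\;\ll\; e^{-\delta_2\,\rho_G(g_k,g_j)}\prod_{i\neq j,k} e^{c\sigma\,\rho_G(g_j,g_i)}\prod_i\cS_{d'}(\phi_i).
\]
The distances $\rho_G(g_j,g_i)$ appearing in the blowup are \emph{not} controlled by $\log\mathfrak{M}(g_{[k]})$ from above---only from below---so they may be arbitrarily larger than the single distance $\rho_G(g_k,g_j)$ providing the decay. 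Already for $k=3$ with $\rho_G(g_1,g_2)=R\gg r=\rho_G(g_1,g_3)=\rho_G(g_2,g_3)$ (so $\log\mathfrak{M}=r$), every choice of peeled-off index and of $j$ yields an error of order $e^{c\sigma R-\delta_2 r}$ or $e^{(c\sigma-\delta_2)r}$, neither of which is $\ll e^{-\delta r}$ for any fixed $\delta>0$ independent of $\sigma,\delta_2$. Your clustering alternative does not rescue this example either, since no two-cluster partition has cluster diameters bounded by the cross-distance. The ``geometric lemma'' you posit in (iii) cannot exist in the form you need, because the obstruction is analytic (exponential norm growth), not combinatorial.

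The paper's route avoids this entirely by never forming such products. Instead, one picks a \emph{nilpotent} vector $Z\in\gog$ (realising the maximal operator norm among the $g_i^{-1}g_j$) and works with the unipotent flow $h(t)=(\exp(tZ_1),\ldots,\exp(tZ_k))$, $Z_j=\Ad(g_j^{-1}g_s)Z/\|\Ad(g_1^{-1}g_s)Z\|$, under which the pushed coupling $\eta=g_{[k]}^{-1}\cdot m_{\Delta_{[k]}(X)}$ is invariant. Because the flow is unipotent, Lemma~\ref{lemma1_outline_main1}(iv) gives $\|\exp(tZ_j)\|_{\rm op}\ll\max(1,|t|)^{\dim G}$, so the Sobolev distortion along the flow is genuinely \emph{polynomial} in $t$ (Lemma~\ref{lemma1}). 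A pigeonhole on the speeds $w_j=\|Z_j\|$ (which satisfy $1=w_1\ge\cdots\ge w_k\le q^{-1}$) then produces a split $[k]=I\sqcup J$ and a time scale $T$ for which the $I$-part has mixed (Lemma~\ref{lemma2}) while the $J$-part has barely moved (Lemma~\ref{lemma3}); these feed into a van der Corput--type averaging estimate (Proposition~\ref{mainest}) that compares $\eta$ to $m_{[k]}$ with the induction hypothesis entering only through the lower-order marginals $\eta_I,\eta_J$. The unipotence of the flow is precisely what converts the exponential-vs-exponential race in your scheme into a winnable polynomial-vs-polynomial one.
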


Our result should be compared with the recent work of Konstantoulas \cite{konst}
which also provides an estimate of the form
\begin{equation}
\label{eq:mix_konst}
| m((a_1 \cdot \phi_1) \cdots (a_k \cdot \phi_k)) - m(\phi_1) \cdots m(\phi_k) |
\le 
\mathfrak{R}(a_1,\ldots,a_k) \,\, C(\phi_1,\ldots,\phi_k)
\end{equation}
with explicit $\mathfrak{R}(a_1,\ldots,a_k)$,
where the elements $a_1,\ldots,a_k$ belong to the same Cartan subgroup of $G$.
This estimate in \cite{konst} holds on a dense subspace of functions,
but it seems that the method of the proof in \cite{konst} cannot be used to make this subspace explicit. In particular, the constant $C(\phi_1,\ldots,\phi_k)$ in \eqref{eq:mix_konst} is not explicit.
The estimator $\mathfrak{R}(a_1,\ldots,a_k)$ is different from our estimator
$\mathfrak{M}(a_1,\ldots,a_k)^{-\delta}$. In particular, it might happen that 
$\mathfrak{R}(a_1,\ldots,a_k)\nrightarrow 0$ when $a_i^{-1}a_j\to \infty$ for all $i\ne j$, so that the estimate \eqref{eq:mix_konst} does not imply mixing 
of order $k$ along the Cartan subgroup.
On the other hand, probably it might happen that 
$\mathfrak{R}(a_1,\ldots,a_k)\le \mathfrak{M}(a_1,\ldots,a_k)^{-\delta}$ for some particular choices
of elements $a_1,\ldots,a_k$.
We note that validity of our estimate 
in Theorem \ref{main1} for general elements $g_1,\ldots,g_k\in G$ is crucial
for the combinatorial application discussed in Section \ref{sec:conf0} below.

We also mention that the exponential multiple mixing estimates
have been established for 
partially hyperbolic flows (see \cite[Th.~4.4]{hui} and \cite[Th.~2]{dolg}),
but it is not clear how to extend this approach to more general group actions. 
\\

We note that the strong spectral gap assumption in Theorem \ref{main1} is known to hold in a number of cases.
For instance, if a simple factor $G_0$ of $G$ has rank at least two,
then if the action of $G_0$ on $X$ is ergodic, it follows from the Kazhdan property (T) that the representation of $G_0$ on $\cL^2_0(X)$ is isolated from
the trivial representation. Another important case is when $L$ is a connected
semisimple Lie group with finite centre and no compact factors,
and $\Gamma$ is an irreducible lattice in $L$.
Then the action of $L$ on 
$X=\Gamma\backslash L$ has strong spectral gap (see \cite[p.~285]{KSar}).
Furthermore, 
for the homogeneous spaces of this form,
given any closed 
connected semisimple subgroup $G$ of $L$, the action of $G$ on 
$X$ also has strong spectral gap (see \cite{n}).\\

We observe that the correlations of order $k$ can be interpreted 
it terms of the probability measure $m_{\Delta_k(X)}$ 
supported on the diagonal $\Delta_k(X)$ in $X^k$ which is the push-forward of $m$
under the diagonal map $X\to \Delta_k(X)\subset X^k$. We note that the measure $m_{\Delta_k(X)}$ is invariant under the action
of the diagonal subgroup $\Delta_k(G)$ of $G^k$,
and its projections to each of the factors of $X^k$ are equal to $m$.
More generally, we say that a probability measure $\xi$ on $X^k$
is a \emph{$k$-coupling} of $(X,m)$ if its marginals (the push-forwards of $\xi$ onto the factors of $X^k$) are equal to $m$. We establish the following effective equiditribution result that applies to general $\Delta_k(G)$-invariant $k$-couplings.

\begin{theorem}[Uniform exponential mixing of all orders for Lie groups]
	\label{main1_coupl}
	Let $G, X, m$ be as in Theorem \ref{main1}. Then, for every $k \geq 2$ and sufficiently  large $d$,
	there exists $\delta=\delta(k,d)>0$ such that 
	for every $\Delta_{k}(G)$-invariant coupling $\xi$ of $(X,m)$,
	functions $\phi_1,\ldots,\phi_k\in\cC_c^\infty(X)$, and
	elements $g_1,\ldots,g_k \in G$, we have
	\[
	|\xi((g_1\cdot \phi_1)\otimes\cdots \otimes(g_k\cdot \phi_k))-m(\phi_1)\cdots m(\phi_k)|
\ll_{d,k}  \,
	 \mathfrak{M}(g_{[k]})^{-\delta}\, \cS_d(\phi_1)\cdots \cS_d(\phi_k).
	\]
	In particular, the above bound is uniform over all $\Delta_{k}(G)$-invariant $k$-couplings $\xi$ of $(X,m)$.
\end{theorem}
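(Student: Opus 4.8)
Since Theorem~\ref{main1} is the special case $\xi=m_{\Delta_k(X)}$ of Theorem~\ref{main1_coupl}, I would prove the coupling statement directly, the organising principle being that the reference measure enters the argument only through two features: it is $\Delta_k(G)$-invariant, and each of its one-coordinate marginals equals $m$. The proof is by induction on $k$, after a preliminary multilinear reduction. Writing $\phi_i=m(\phi_i)+\phi_i^0$ with $m(\phi_i^0)=0$ and expanding, the quantity $\xi(\bigotimes_i g_i\cdot\phi_i)-\prod_i m(\phi_i)$ becomes a sum, over nonempty $S\subseteq\{1,\dots,k\}$, of terms $\big(\prod_{i\notin S}m(\phi_i)\big)\,\xi_S\big(\bigotimes_{i\in S}g_i\cdot\phi_i^0\big)$, where $\xi_S$ is the marginal of $\xi$ on the coordinates in $S$; the terms with $|S|=1$ vanish since the marginals are $m$, each $\xi_S$ is again a $\Delta_{|S|}(G)$-invariant $|S|$-coupling of $(X,m)$, and $\mathfrak{M}(g_S)\geq\mathfrak{M}(g_{[k]})$ for $|S|\geq 2$. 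Since moreover $\cS_d(\phi_i^0)\ll\cS_d(\phi_i)$ and $|m(\phi_i)|\le\cS_d(\phi_i)$, it suffices to bound $|\xi(\bigotimes_i g_i\cdot\phi_i^0)|$ for mean-zero functions, with all pairwise distances $\rho_G(g_i,g_j)$ of size $\log\mathfrak{M}(g_{[k]})$. Replacing each $g_i$ by $g_1^{-1}g_i$ (legitimate by $\Delta_k(G)$-invariance, and leaving $\mathfrak{M}$ unchanged since $\rho_G$ is left-invariant) one may furthermore assume $g_1=e$.

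The single input that needs re-examination for couplings is the two-point estimate. Let $p_i\colon X^k\to X$ be the $i$-th coordinate projection; it is $G$-equivariant for the diagonal action $\Delta_k(G)$ and satisfies $(p_i)_*\xi=m$, so $p_i^{*}\colon\cL^2(X)\to\cL^2(X^k,\xi)$ is a $G$-equivariant isometric embedding, the subspace $\cH_i:=p_i^{*}\cL^2_0(X)$ is $\pi_\xi$-invariant and isomorphic as a $G$-representation to the representation on $\cL^2_0(X)$, and the orthogonal projection $Q_i$ onto $\cH_i$ commutes with $\pi_\xi$ and hence with $\cC_G$. Therefore, for $v\in\cH_i$ and any $w\in\cL^2(X^k,\xi)$ one has $\langle\pi_\xi(g)v,w\rangle=\langle\pi_\xi(g)v,Q_iw\rangle$; applying \eqref{eq:exp_mix0_sobolev} inside $\cH_i\cong\cL^2_0(X)$ (whose representation has strong spectral gap by hypothesis) and using $\|\cC_G^d Q_iw\|\le\|\cC_G^d w\|$ yields, for all sufficiently large $d$ and with $\delta>0$ as in \eqref{eq:exp_mix0_sobolev},
\[
|\langle\pi_\xi(g)v,w\rangle|\ll_d e^{-\delta\rho_G(g,e)}\,\|\cC_G^d v\|\,\|\cC_G^d w\|.
\]
Taking $v=p_i^{*}\phi^0$ and $w=p_j^{*}\psi^0$ with $i\neq j$, and using $\cC_G^d p_i^{*}=p_i^{*}\cC_G^d$ together with $\|p_i^{*}(\cdot)\|_{\cL^2(\xi)}=\|\cdot\|_{\cL^2(m)}$, recovers the full two-point exponential mixing estimate in the Sobolev norms $\cS_d$ of the original functions on $X$. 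The same reasoning applies verbatim to the self-coupling $\xi\otimes\xi$ of $(X,m)$ on $2k$ coordinates, which is again a $\Delta_{2k}(G)$-invariant coupling and which carries the shift by a one-parameter subgroup $\{a_s\}$ in one of its two $X^k$-factors.

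With this in hand, the inductive step is carried out as in the proof of Theorem~\ref{main1}: using the structure theory of the semisimple group $G$ one produces a one-parameter subgroup $\{a_s\}$ along which the translates of the (mean-zero) functions separate for the given configuration $(g_1,\dots,g_k)$; one rewrites the correlation as an average over $s$ in a large box by $\Delta_k(G)$-invariance; and a van der Corput / Cauchy--Schwarz estimate in $s$ peels off one coordinate against the remaining bulk, the resulting decorrelation being controlled on $\xi\otimes\xi$ by the two-point estimate above applied to the two single-coordinate copies of the peeled function — this is where the mean-zero hypothesis on that factor is consumed — and leaving a correlation of strictly smaller complexity for the inductive hypothesis. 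The Sobolev norms survive because conjugation by $\{a_s\}$ distorts $\cC_G$ only polynomially in $s$; the exponent degrades by a controlled factor at each step, producing the final $\delta=\delta(k,d)>0$.

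The main obstacle is the one already present for Theorem~\ref{main1} and is not specific to couplings: arranging the van der Corput induction so that it genuinely lowers the complexity, and constructing, for an \emph{arbitrary} configuration with all pairwise distances large, a single one-parameter subgroup realising the required separation. The delicate point in the coupling setting is that after a van der Corput step one works on $\xi\otimes\xi$ with functions depending on several coordinates; what makes this harmless is precisely that at each decorrelation step one pairs off only a single-coordinate function of zero mean, so that the estimate of the second paragraph — which demands a spectral gap only for that one factor, via the projection $Q_i$ — suffices, and one never invokes a spectral gap on the whole of $\cL^2_0(X^k,\xi)$, which a general coupling need not have. Granting the core argument of Theorem~\ref{main1}, the coupling statement then costs only the observations that marginals and self-couplings of $\Delta$-invariant couplings are again such couplings, and that two-point mixing survives for their single-coordinate functions; since none of the resulting bounds depends on $\xi$ beyond these two features, the uniformity over all $\Delta_k(G)$-invariant $k$-couplings follows.
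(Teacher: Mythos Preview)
Your mean-zero expansion and the observation that each marginal $\xi_S$ is again a $\Delta_{|S|}(G)$-invariant coupling are correct, as is the spectral-gap claim for the single-coordinate subspace $\cH_i=p_i^*\cL_0^2(X)\subset\cL^2(X^k,\xi)$. But the heart of your sketch --- the van der Corput step that ``peels off one coordinate'' --- has a real gap, and the paper does not proceed this way. Peeling off coordinate~$1$ via averaging along a flow $h(t)=(\exp(tZ_1),\ldots,\exp(tZ_k))$ requires that on the relevant time scale $T$ the first factor equidistributes while factors $2,\ldots,k$ barely move; in the paper's normalization this means $w_1T\gg 1$ and $w_2T\ll 1$, i.e.\ a large gap $w_1/w_2$. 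No such gap is guaranteed: the construction from Lemma~\ref{lemma1_outline_main1}(ii) only yields $1=w_1\ge w_2\ge\cdots\ge w_k$ with $w_k\le q^{-1}$, and the intermediate $w_j$ can be spread arbitrarily. Your phrase ``one produces a one-parameter subgroup along which the translates separate'' hides exactly this difficulty, and the passage through $\xi\otimes\xi$ is not made precise.

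What the paper actually does is a \emph{block} version. A pigeonhole on the $k$ points $q^{-\delta i}$ inside $[w_k,w_1]$ locates an index $1\le p<k$ and a scale $T\in[w_p^{-1},w_{p+1}^{-1}]$ with both $w_pT\ge q^{\delta/2}$ and $w_{p+1}T\le q^{-\delta/2}$; one sets $I=[1,p]$, $J=[p+1,k]$ and applies the abstract coupling inequality (Proposition~\ref{mainest}). The averaging operator $P_T$ acts on the whole $I$-block, the slow $J$-block contributes the Lipschitz error $w_{p+1}T$, and the deviation of $(P_T\otimes\id)^*\eta$ from $\eta_I\otimes\eta_J$ is bounded by $E_T(\eta_I)$. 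Crucially, the two-mixing input is \emph{not} a matrix-coefficient bound inside $\cL^2(X^k,\xi)$: it is two-mixing for the product measure $m_I=m^{\otimes|I|}$ (Lemma~\ref{lemma2}), which bounds $E_T(m_I)$, and the inductive hypothesis enters by transferring $E_T(\eta_I)\approx E_T(m_I)$ through the Wasserstein distance $\dist_{M_I}(\eta_I,m_I)$ (Lemma~\ref{lemmaII.2}). This is why the norm pair $M\preceq N$ and the product estimate~\eqref{dr} are needed, and why uniformity over couplings is automatic.
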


The proofs of Theorem \ref{main1} and Theorem \ref{main1_coupl} will be given in Section \ref{outline_main1_0}.

\subsection{An application: approximate configurations in lattices}
\label{sec:conf0}

It was realised by Furstenberg in his proof of Szemeredi theorem \cite{F}
that analysis of higher order correlations of dynamical systems leads to deep combinatorial
consequences. Developments of these ideas have allowed to prove a number of far-reaching results regarding existence of configurations. For instance, we mention the works of Furstenberg, Katznelson and Weiss \cite{FKW}
and Ziegler \cite{Z} which show that  given a subset $\Omega$ of $\bR^n$ of positive upper density and a $k$-tuple $(v_1,\ldots,v_k)\in (\bR^n)^k$, for all sufficiently large dilations $t$
and $\eps>0$ there exist a $k$-tuple $(\omega_1,\ldots,\omega_k)\in\Omega^k$ and  an isometry $I$ of $\bR^n$ such that
$$
d(tv_i, I(\omega_i))<\eps\quad\quad\hbox{for all $i=1,\ldots,k$,}
$$
i.e., the set $\Omega$ must contain an approximate isometric copy of any
sufficiently dilated configuration. 
In particular, it follows from this result that given any lattice $\Lambda$ in $\bR^n$,
any $k$-tuple $(v_1,\ldots,v_k)\in (\bR^n)^k$ and $\eps>0$,
for all sufficiently large $t$, 
there exist a $k$-tuple $(z_1,\ldots,z_k)\in\Lambda^k$ and  an isometry $I$ of $\bR^n$ such that
$$
d(tv_i, I(z_i))<\eps\quad\quad\hbox{for all $i=1,\ldots,k$.}
$$
It would be interesting to investigate whether an analogue of this statement holds
for other locally compact groups and whether it can be made explicit in terms of $t$.
Here we address these questions for lattices in semisimple Lie groups.\\

To illustrate our general result, let us consider a Fuchsian group $\Gamma\subset \hbox{Isom}(\bH^2)$ of finite covolume. 
For fixed $v_0\in \bH^2$, we consider a discrete subset $\Gamma v_0$
of the hyperbolic plane $\bH^2$. How rich is the set of $k$-tuple
$(z_1,\ldots,z_k)$ with $z_i\in \Gamma v_0$ ?
For a $k$-tuple $(v_1,\ldots,v_k)\in \bH^2$,
we define its width by
$$
w(v_1,\ldots,v_k)=\min_{i\ne j} d(v_i,v_j).
$$
It follows from our main result that for every $k\ge 2$,
given an arbitrary $k$-tuple $(v_1,\ldots,v_k)\in (\bH^2)^k$
that satisfies 
\begin{equation}
\label{eq:min}
w(v_1,\ldots,v_k)\ge c_k\log(1/\eps)
\end{equation}
with $\eps\in (0,\eps_k)$,
there exist a $k$-tuple $(z_1,\ldots,z_k)\in (\Gamma v_0)^k$ and an isometry $g\in \hbox{PSL}_2(\bR)$
such that 
$$
d(v_i,g\,z_i)<\eps\quad\quad \hbox{for all $i=1,\ldots,k$.}
$$

We note that the instance of this result when $k=2$ reduces to analysing the set of distances
$\{d(\gamma\, v_0,v_0):\, \gamma\in \Gamma\}$. For example, when $\Gamma=\hbox{PSL}_2(\bZ)$,
we need to show that for 
$$
\mathcal{D}:=\{d(\gamma\, v_0,v_0):\, \gamma\in \Gamma\}
=\left\{\cosh^{-1} 2(a^2+b^2+c^2+d^2)/4:\,\, ad-bc=1,\, a,b,c,d\in\bZ\right\},
$$
the sets $\mathcal{D}\cap [c_k\log(1/\eps)),\infty)$
are $\eps$-dense in $[c_k\log(1/\eps),\infty)$.
Using that the set of distances is contained in $\cosh^{-1}(\bN)/4$, it is not hard to check that
if \eqref{eq:min} is replaced by the condition that $w(v_1,\ldots,v_k)\ge \sigma(\eps)$ with 
$\sigma(\eps)=o(\log(1/\eps))$
as $\eps\to 0^+$, then the above statement fails.\\

In full generality, we consider a connected semisimple Lie group $G$ with finite centre and without compact factors equipped with 
a left-invariant Riemannian metric $\rho_G$ on $G$ which is
bi-invariant under a fixed maximal compact subgroup.
For any irreducible lattice $\Gamma$ in $G$, we prove

\begin{corollary}
\label{main1_cor}
For every $k\ge 2$, there exist $c_k, \eps_k>0$ such that
given arbitrary $k$-tuple $(g_1,\ldots, g_k)\in G^k$
satisfying 
$$
w(g_1,\ldots,g_k):=\min_{i\ne j} \rho_G(g_i,g_j)\ge c_k\log(1/\eps)
$$
with $\eps\in (0,\eps_k)$, there exist a $k$-tuple $(\gamma_1,\ldots, \gamma_k)\in \Gamma^k$
and $g\in G$ such that 
$$
\rho_G(g_i,g\,\gamma_i)<\eps\quad\quad \hbox{for all $i=1,\ldots,k$.}
$$
\end{corollary}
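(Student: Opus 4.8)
The plan is to deduce Corollary \ref{main1_cor} from Theorem \ref{main1} by a bump-function positivity argument on $X=\Gamma\backslash G$. Taking $L=G$ in Theorem \ref{main1}, the space $X=\Gamma\backslash G$ carries an invariant probability measure $m$; since $\Gamma$ is an irreducible lattice in the connected semisimple group $G$ with finite centre and no compact factors, the action of $G$ on $X$ has strong spectral gap (as recalled after Theorem \ref{main1_coupl}), so Theorem \ref{main1} applies to $X$. Write $n=\dim G$ and $B_\epsilon=\{g\in G:\rho_G(g,e)<\epsilon\}$; since $\Gamma$ is discrete there is $\epsilon_0=\epsilon_0(\Gamma)>0$ with $B_\epsilon\to X$ injective for $\epsilon<\epsilon_0$. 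For such $\epsilon$ I would fix $\phi_\epsilon\in\cC_c^\infty(X)$ with $0\le\phi_\epsilon\le 1$, supported on the image of $B_\epsilon$, equal to $1$ on the image of $B_{\epsilon/2}$, obtained by pushing down a left bump function on $G$ whose derivatives of order $\le d$ are $\ll_d\epsilon^{-d}$. Two elementary facts are then available for $\epsilon<\epsilon_0$: $m(\phi_\epsilon)\ge c_0\,\epsilon^{\,n}$ with $c_0=c_0(G,\Gamma)>0$ (the volume of a small ball), and $\cS_d(\phi_\epsilon)\le C_1(d)\,\epsilon^{-A}$ for some $A=A(d)\ge 0$ (the scaling of the degree-$d$ Sobolev norm under contraction to scale $\epsilon$).

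For the geometric reduction, note that for the $G$-action $g\cdot x=xg^{-1}$ one has $(g_i\cdot\phi_\epsilon)(\Gamma h)=\phi_\epsilon(\Gamma h g_i)$, hence
\[
m\big((g_1\cdot\phi_\epsilon)\cdots(g_k\cdot\phi_\epsilon)\big)=\int_X\prod_{i=1}^k\phi_\epsilon(\Gamma h g_i)\,dm(\Gamma h)\ \ge\ 0 .
\]
If this integral is strictly positive, then there is $h\in G$ with $\phi_\epsilon(\Gamma h g_i)>0$ for all $i$, so $hg_i\in\Gamma B_\epsilon$; writing $hg_i=\gamma_i b_i$ with $\gamma_i\in\Gamma$, $b_i\in B_\epsilon$, and putting $g:=h^{-1}$, left-invariance of $\rho_G$ gives
\[
\rho_G(g_i,\,g\gamma_i)=\rho_G(g_i,\,h^{-1}\gamma_i)=\rho_G(\gamma_i^{-1}hg_i,\,e)=\rho_G(b_i,e)<\epsilon\qquad(1\le i\le k),
\]
which is exactly the configuration asserted in the corollary. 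Thus it remains to guarantee that the correlation above is strictly positive whenever $\epsilon$ is small and $w(g_1,\ldots,g_k)$ is large.

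For this I would apply Theorem \ref{main1} with $\phi_1=\cdots=\phi_k=\phi_\epsilon$: fixing $d=d(k)$ large enough for that theorem and letting $\delta=\delta(k,d)>0$ be the resulting exponent,
\[
m\big((g_1\cdot\phi_\epsilon)\cdots(g_k\cdot\phi_\epsilon)\big)\ \ge\ m(\phi_\epsilon)^k-C_{d,k}\,\mathfrak{M}(g_1,\ldots,g_k)^{-\delta}\,\cS_d(\phi_\epsilon)^k .
\]
Under the hypothesis $w(g_1,\ldots,g_k)\ge c_k\log(1/\epsilon)$ one has $\mathfrak{M}(g_1,\ldots,g_k)^{-\delta}=\exp(-\delta\,w(g_1,\ldots,g_k))\le\epsilon^{\,c_k\delta}$, so the right-hand side is at least $c_0^{\,k}\,\epsilon^{\,kn}-C_{d,k}\,C_1(d)^k\,\epsilon^{\,c_k\delta-kA}$. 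Choosing $c_k:=k(n+A)/\delta+1$ makes $c_k\delta-kA>kn$, so this lower bound is strictly positive once $\epsilon$ drops below an explicit threshold $\epsilon_k\le\epsilon_0$ that depends only on $k$ (through $d(k),\delta,A,n,c_0,C_1,C_{d,k}$). Since neither $c_k$ nor $\epsilon_k$ depends on the tuple $(g_1,\ldots,g_k)$ or on $\epsilon$, this proves the corollary; the Fuchsian statement in the introduction is the special case $G=\PSL_2(\bR)$, read off after projecting along $G\to G/\mathrm{PSO}(2)=\bH^2$.

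I expect that, granting Theorem \ref{main1}, this deduction will be essentially routine, with the one delicate point being the polynomial-in-$\epsilon^{-1}$ control of $\cS_d(\phi_\epsilon)$: the admissible degree $d$ is dictated by Theorem \ref{main1}, and enlarging $d$ both increases $A$ and may shrink $\delta$, so I would take $d$ minimal with Theorem \ref{main1} applicable and then choose $c_k$ accordingly. All the substantial content resides in Theorem \ref{main1} itself.
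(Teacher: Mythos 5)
Your proposal is correct and follows essentially the same route as the paper: apply Theorem \ref{main1} to $X=\Gamma\backslash G$ (invoking the strong spectral gap as in the discussion after Theorem \ref{main1_coupl}), feed in a smooth bump function supported near the identity coset with $\cS_d(\phi_\eps)$ polynomially bounded in $\eps^{-1}$, and observe that strict positivity of the $k$-fold correlation forces the desired $(\gamma_1,\ldots,\gamma_k)\in\Gamma^k$ and $g\in G$. The only cosmetic difference is the normalization: you take an $L^\infty$-normalized bump with $m(\phi_\eps)\asymp\eps^{\,n}$, whereas the paper periodizes an $L^1$-normalized $\Phi_\eps$ on $G$ so that $m(\phi_\eps)=1$ exactly; both lead to the same choice of $c_k$ up to a constant, and the final inequality $\rho_G(g_i,g\gamma_i)<\eps$ via left-invariance of $\rho_G$ is identical.
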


We prove Corollary \ref{main1_cor} in Section \ref{sec:conf}.

\subsection{S-algebraic groups}

The results of Section \ref{s:ss} can be extended to actions
of $S$-algebraic semisimple groups. Let $\bG\subset \hbox{GL}_n$ be a simply connected absolutely simple algebraic group defined over a number field $F$. We denote by $\mathcal{V}_F$ the set of places of $F$, 
and for $v\in \mathcal{V}_F$ we write $F_v$ for the completion of $F$ with respect to the norm $|\cdot|_v$. Let $G_v=\bG(F_v)$. We fix a finite subset $S$ of $\mathcal{V}_F$ and consider the group 
\begin{equation}
\label{eq:GS}
G:=\prod_{v\in S} G_v.
\end{equation}
Let $S=S_\infty\sqcup S_f$ where $S_\infty$ and $S_f$ denote the subsets of the Archemedean places and the non-Archemedean places respectively.
We set 
$$
G_\infty:=\prod_{v\in S_\infty} G_v\quad\hbox{and}\quad G_f:=\prod_{v\in S_f} G_v,
$$
so that $G=G_\infty G_f$.

Let us consider a measure-preserving action of $G$ on a probability space $(X,m)$.
Then we obtain a unitary representation of $G$ on the space $\cL^2(X)$.
Given a compact open subgroup $U$ of $G_f$, we denote by $\cC^\infty(X)^U$
the subalgebra of $\cL^2(X)$ consisting of vectors which are smooth with respect to
the action of $G_\infty$ and are $U$-invariant.
We say that the action of $G$ on $(X,m)$
has \emph{strong spectral gap} if the representation of each noncompact factor $G_v$ with $v\in S$
on $\cL^2_{0}(X)$ is isolated  from the trivial representation.
In this situation there are quantitative bounds on matrix coefficients of $\cC^\infty(X)^U$
analogous to \eqref{eq:exp_mix0}. In particular, we refer to the works 
of Borel, Wallach \cite{BW}, Oh \cite{oh2}, Clozel, Oh, Ullmo \cite{CUO},
and Gorodnik, Maucourant, Oh \cite{GMO}
that discuss such bounds over non-Archimedean fields.
For every $v\in S$, let us fix a norm on $\hbox{M}_n(F_v)$ and define the height function on $G$ by
$$
\hbox{H}(g):=\prod_{v\in S} \|g_v\|_v\quad \hbox{ for $g=(g_v)_{v\in S}\in G$.} 
$$
One can check that $\hbox{H}$ is a proper function on $G$.
With this notation, there exists $\delta>0$
such that for all sufficiently large $d$, a compact open subgroup $U$ of $G_f$, 
functions $\phi_1,\phi_2\in \cC^\infty(X)^U$, and an element $g\in G$,
\begin{equation}
\label{eq:mixH}
|m((g\cdot \phi_1)\phi_2) -m(\phi_1)m(\phi_2)|\ll_{d,U} \hbox{H}(g)^{-\delta}\, \|\mathcal{C}_{G_\infty}^d\phi_1\|_2\, \|\mathcal{C}_{G_\infty}^d\phi_2\|_2.
\end{equation}
This estimate can deduces as in the proof of \cite[Theorem~3.27]{GMO} from the bounds 
for representations of the local factors $G_v$. In this paper, we establish an analogous estimate for correlations
of higher order.

We consider a continuous measure-preserving action of $G$ on a locally compact Hausdorff space $X$
equipped with a probability Borel measure $m$. Let $\cC_c^\infty(X)$ 
denote the subalgebra of $\cC_c(X)$ consisting of functions which are smooth with
respect to the action of $G_\infty$ and invariant under a compact open subgroup of $G_f$.
Given a compact open subgroup $U$ of $G_f$, we denote by $\cC_c^\infty(X)^U$
the subalgebra of functions in $\cC_c^\infty(X)$ which are invariant under $U$.

We assume that there is a family of norms $\cS_d$, $d\in \mathbb{N}$, on $\cC_c^\infty(X)^U$
that satisfy the following properties:
\begin{enumerate}
	\setlength\itemsep{0.5em}
	\item[N1.] For all sufficiently large $d$, any compact open subgroup $U$ of $G_f$, and $\phi \in\cC_c^\infty(X)^U$,
	\begin{equation}
	\label{a1}
	\|\phi\|_\infty \ll_{d,U}  \cS_d(\phi).
	\end{equation}
	
	\item[N2.] For all sufficiently large $d$, any compact open subgroup $U$ of $G_f$, $\phi \in\cC_c^\infty(X)^U$, and $g\in G_v$ with $v\in S_\infty$,
	\begin{equation}
	\label{a2}
	\|\phi - g \cdot \phi\|_\infty \ll_{d,U} \rho_{G_v}(g,e_{G_v}) \, \cS_d(\phi),
	\end{equation}
	where $\rho_{G_v}$ denotes a left-invariant Riemannian metric on $G_v$.
	
	\item[N$2^\prime$.] For all sufficiently large $d$, any compact open subgroup $U$ of $G_f$, $\phi \in\cC_c^\infty(X)^U$, and $g\in G_v$ with $v\in S_f$,
	\begin{equation}
	\label{a2'}
	\|\phi - g \cdot \phi\|_\infty \ll_{d,U} \|\Ad(g)-id\| \, \cS_d(\phi),
	\end{equation}
	where $\|\cdot \|$ denotes the operator norm on $\hbox{End}(\hbox{Lie}(G_v))$
	for a fixed choice of a norm on $\hbox{Lie}(G_v)$.
	
	\item[N3.] For all sufficiently large $d$, there exists $\sigma=\sigma(d)>0$
	such that for any compact open subgroup $U$ of $G_f$, $\phi \in\cC_c^\infty(X)^U$, and $g\in G$,
	\begin{equation}
	\label{a3}
	\cS_d(g \cdot \phi) \ll_{d,U} \|\Ad(g)\|^{\sigma} \, \cS_d(\phi).
	\end{equation}
	
	\item[N4.] There exists $r > 0$ such that for all sufficiently large $d$, any compact open subgroup $U$ of $G_f$,
	and $\phi_1, \phi_2 \in\cC_c^\infty(X)^U$,
	\begin{equation}
	\label{a4}
	\cS_d(\phi_1 \phi_2) \ll_{d,U} \cS_{d+r}(\phi_1) \, \cS_{d+r}(\phi_2).
	\end{equation}
\end{enumerate}

Such collections of norms can constructed on finite-volume homogeneous spaces
of $S$-algebraic groups (see, for instance, \cite[Appendix~A]{EMMV}).

We establish the following general result which extends the estimate \eqref{eq:mixH} to correlations of arbitrary order.

\begin{theorem}[Exponential mixing of all orders for $S$-algebraic groups]
	\label{main1_Sarithmetic}
	
Let $G$ be an $S$-algebraic group as in \eqref{eq:GS}
which acts continuously and in a measure-preserving fashion on
a locally compact Hausdorff space $X$ equipped with a Borel probability measure $m$. We assume 
that $X$ is equipped with a family of norms $\cS_d$ satisfying Properties {\rm N1--N4},
and  
there exists $\delta_2>0$
such that for all sufficiently large $d$, a compact open subgroup $U$ of $G_f$, 
functions $\phi_1,\phi_2\in\cC_c^\infty(X)^U$, and an element $g\in G$, we have 
$$
|m((g\cdot \phi_1)\phi_2) -m(\phi_1)m(\phi_2)|\ll_{d,U} \hbox{\rm H}(g)^{-\delta_2}\, \cS_d(\phi_1)\cS_d(\phi_2).
$$
	
Then, for every $k \geq 2$ and sufficiently large $d$, there exists $\delta=\delta(k,d, \delta_2) > 0$
such that for all compact open subgroups $U$ of $G_f$,
functions $\phi_1,\ldots,\phi_k \in\cC_c^\infty(X)^U$, and elements $g_1,\ldots,g_k \in G$, we have 
	\[
	| m((g_1 \cdot \phi_1) \cdots (g_k \cdot \phi_k)) - m(\phi_1) \cdots m(\phi_k) |
	\ll_{d,U,k}
	\mathfrak{H}(g_1,\ldots,g_k)^{-\delta} \, \cS_d(\phi_1) \cdots \cS_d(\phi_k),
	\]
	where 
	$$
	\mathfrak{H}(g_1,\ldots,g_k):=\min_{i\ne j} \hbox{\rm H}(g_i^{-1} g_j).
	$$
\end{theorem}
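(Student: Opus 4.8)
The plan is to prove Theorem~\ref{main1_Sarithmetic} by induction on the order $k$, using an ``unfolding of the smallest gap'' argument that reduces a $k$-fold correlation to a $(k-1)$-fold correlation plus a controlled error supplied by the order-two hypothesis. First I would reorder the indices so that the minimal value $\min_{i\ne j}\mathrm{H}(g_i^{-1}g_j)$ is attained by a specific pair, say $\mathrm{H}(g_{k-1}^{-1}g_k)$, and translate everything by $g_k^{-1}$ on the left (legitimate since $m$ is $G$-invariant), so that $g_k$ becomes the identity and $h:=g_{k-1}^{-1}g_k$ is the ``close'' element. Then $\prod_i g_i\cdot\phi_i = \bigl(\prod_{i<k-1} g_i'\cdot\phi_i\bigr)\cdot\bigl((g_{k-1}'\cdot\phi_{k-1})\,\phi_k\bigr)$, and I want to replace the product $(g_{k-1}'\cdot\phi_{k-1})\,\phi_k$ by a single function to which the inductive hypothesis applies, paying an error that is small because $g_{k-1}'$ is close to $g_k'$. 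The key mechanism is an intermediate-scale decomposition: choose an auxiliary element (or rather, compare $g_{k-1}'$ and $g_k'$) so that $\mathrm{H}$ of the relevant displacement is roughly a fractional power $\mathfrak{H}^{\theta}$ of the global minimum; on the Archimedean factors one uses Property~N2 with $\rho_{G_v}(g,e)\ll\log\mathrm{H}(g)$, and on the non-Archimedean factors one uses Property~N$2'$ with $\|\mathrm{Ad}(g)-\mathrm{id}\|\ll\mathrm{H}(g)^{O(1)}$, to estimate $\|(g_{k-1}'\cdot\phi_{k-1})\,\phi_k - (g_k'\cdot\phi_{k-1})\,\phi_k\|_\infty$ and hence the contribution of the difference.

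More precisely, I would split the correlation into two pieces. For the ``main'' piece, write $(g_{k-1}'\cdot\phi_{k-1})\,\phi_k = \psi_k + \bigl((g_{k-1}'\cdot\phi_{k-1}) - (g_k'\cdot\phi_{k-1})\bigr)\phi_k$ where $\psi_k := (g_k'\cdot\phi_{k-1})\,\phi_k = \phi_{k-1}\,\phi_k$ when $g_k'=e$; this is a genuine element of $\cC_c^\infty(X)^U$ with $\cS_d(\psi_k)\ll \cS_{d+r}(\phi_{k-1})\cS_{d+r}(\phi_k)$ by Property~N4. Applying the inductive hypothesis at order $k-1$ to the functions $g_1'\cdot\phi_1,\ldots,g_{k-2}'\cdot\phi_{k-2},\psi_k$ — actually to $\phi_1,\ldots,\phi_{k-2},\psi_k$ with elements $g_1',\ldots,g_{k-2}',e$ — gives $m\bigl(\prod_{i<k-1}(g_i'\cdot\phi_i)\cdot\psi_k\bigr)$ up to $\mathfrak{H}(g_1',\ldots,g_{k-2}',e)^{-\delta_{k-1}}\prod\cS_d$. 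There is a subtlety: the new minimal gap $\mathfrak{H}(g_1',\ldots,g_{k-2}',e)$ could in principle be smaller than the old one, so one must track that the second-smallest gap of the original tuple controls it, or iterate the unfolding; handling this bookkeeping cleanly (perhaps by always peeling off a pair realizing the minimum and arguing that after at most $k-1$ steps everything is resolved) is the technical heart of the argument. Then $m\bigl(\prod_{i<k-1}(g_i'\cdot\phi_i)\cdot\psi_k\bigr)$ must be compared to $m(\phi_1)\cdots m(\phi_k)$; since $\psi_k=\phi_{k-1}\phi_k$ this is $m(\phi_1)\cdots m(\phi_{k-2})\,m(\phi_{k-1}\phi_k)$, and the discrepancy $|m(\phi_{k-1}\phi_k)-m(\phi_{k-1})m(\phi_k)|$ is exactly an order-two correlation — but at displacement $e$, which is not small. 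The resolution is that $\psi_k$ should instead be $(g_k'\cdot\phi_{k-1})\phi_k$ with $g_k'$ chosen at an \emph{intermediate} scale, not equal to $e$: one inserts a genuine group element $a$ with $\mathrm{H}(a)\asymp\mathfrak{H}^{\theta}$, writes $(g_{k-1}'\cdot\phi_{k-1})\phi_k = (a\cdot\phi_{k-1})\phi_k + \text{(small, by N2/N2$'$ since }\mathrm{H}(g_{k-1}'a^{-1})\text{ small)}$, applies order-two mixing to $m\bigl((a\cdot\phi_{k-1})\phi_k\bigr)$ giving $m(\phi_{k-1})m(\phi_k)$ up to $\mathrm{H}(a)^{-\delta_2}$, and folds this back into the $(k-1)$-fold estimate via Property~N1 to convert the $\cL^2$/sup error into an error against $\cS_d$ norms. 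Optimizing $\theta\in(0,1)$ balances the N2/N2$'$ error (polynomial or logarithmic in $\mathfrak{H}^{\theta}$) against the mixing error $\mathfrak{H}^{-\theta\delta_2}$ and the inductive error, yielding $\delta(k,d,\delta_2)>0$.

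The remaining ingredients are routine but must be assembled: Property~N3 is used whenever a translated function $g_i'\cdot\phi_i$ appears as an argument of a lower-order correlation, to bound $\cS_d(g_i'\cdot\phi_i)\ll\|\mathrm{Ad}(g_i')\|^\sigma\cS_d(\phi_i)$, and one checks $\|\mathrm{Ad}(g)\|\ll\mathrm{H}(g)^{O(1)}$ so that this factor is at most a fixed power of $\mathfrak{H}$, harmless after a further tiny shrinkage of $\delta$; Property~N1 converts sup-norm errors into $\cS_d$-bounded errors; and Property~N4 (with the index shift $r$) is used each time two functions are multiplied, which over $k$ steps costs a total shift $\le (k-1)r$, so the ``sufficiently large $d$'' in the conclusion is $d$ plus a $k$-dependent constant. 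The main obstacle, as flagged above, is the bookkeeping of which gap controls the inductive error after unfolding — ensuring that peeling off the minimal pair genuinely decreases $k$ without losing control of $\mathfrak{H}$ — together with choosing the interpolation exponent $\theta$ uniformly so that the recursion $\delta_k = c\cdot\min(\delta_{k-1},\delta_2)$ stays positive; everything else is a matter of combining the four norm axioms with the base case provided by the hypothesis.
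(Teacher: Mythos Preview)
Your ``merge the closest pair'' induction does not work, and the obstacle is structural rather than bookkeeping. In the regime of interest $\mathfrak{H}\to\infty$, so even the pair realising the minimum has $\mathrm{H}(g_{k-1}')=\mathfrak{H}\to\infty$: no two translates are close. Your proposed intermediate element $a$ with $\mathrm{H}(a)\asymp\mathfrak{H}^\theta$ cannot do the job, because by sub-multiplicativity $\mathrm{H}(a^{-1}g_{k-1}')\gg\mathfrak{H}^{1-\theta}\to\infty$, and Properties~N2/N2$'$ only give smallness when their argument is near the identity; the quantities $\rho_{G_v}(a^{-1}g_{k-1}',e)$ and $\|\Ad(a^{-1}g_{k-1}')-\id\|$ both \emph{grow} with $\mathfrak{H}$, so the replacement error does not decay. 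If instead you keep $a=g_{k-1}'$ and set $\Psi=(g_{k-1}'\cdot\phi_{k-1})\phi_k$, then N4 and N3 force $\cS_d(\Psi)\ll\|\Ad(g_{k-1}')\|^{\sigma}\,\cS_{d+r}(\phi_{k-1})\cS_{d+r}(\phi_k)\ll\mathfrak{H}^{c\sigma}\,\cS_{d+r}(\phi_{k-1})\cS_{d+r}(\phi_k)$, so the inductive error is $\mathfrak{H}^{c\sigma-\delta_{k-1}}$. Since $\sigma=\sigma(d)$ is a fixed structural exponent while $\delta_{k-1}$ is a (typically tiny) spectral-gap exponent, one cannot assume $\delta_{k-1}>c\sigma$; this is not ``harmless after a tiny shrinkage of $\delta$'' but a genuine loss.

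The paper proceeds quite differently. It rewrites the $k$-fold correlation as the evaluation of a $\Delta_{[k]}(G)$-invariant coupling on a tensor, passes to $\eta=g_{[k]}^{-1}\cdot\xi$, and exploits that $\eta$ is invariant under a diagonal one-parameter flow $t\mapsto(\exp(tZ_1),\ldots,\exp(tZ_k))$ built from a unit \emph{nilpotent} direction $Z$ chosen to realise the \emph{maximal} displacement $Q=\max_{i\ne j}\|g_i^{-1}g_j\|_{\rm op}$. The conjugated speeds $w_j=\|Z_j\|$ satisfy $1=w_1\ge\cdots\ge w_k$ with $w_k\ll Q^{-1}$; a pigeonhole finds an index $p$ with a gap $w_{p+1}\ll w_p$, one splits $[k]=I\sqcup J$ at $p$, and then averages the flow over $|t|\le T$ with $T\in[w_p^{-1},w_{p+1}^{-1}]$. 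A three-term Wasserstein triangle inequality (Proposition~\ref{mainest}) gives: (i) the $J$-flow is slow, so moving $\eta$ to its $I$-average costs $w_{p+1}T$ via N2/N2$'$; (ii) the $I$-flow mixes at rate $(w_pT)^{-\delta_2/2}$ by the order-two hypothesis; (iii) the marginals $\eta_I,\eta_J$ are handled by the inductive hypothesis at level $<k$. Here N3 enters only as polynomial growth $N_I(h_I(t)\cdot\phi_I)\ll|t|^a N_I(\phi_I)$ of the averaging operator, which is beaten by the mixing rate after optimising the free parameter $T$; crucially it never produces an uncontrolled factor $\mathfrak{H}^{c\sigma}$.
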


In fact, our method allows to deal with arbitrary 
$\Delta_{k}(G)$-invariant $k$-couplings of the space $(X,m)$.

\begin{theorem}[Uniform exponential mixing of all orders for $S$-algebraic groups]
	\label{main1_coupl_Sarithmetic}
	Let $G, X, m$ be as in Theorem \ref{main1_Sarithmetic}. Then, for every $k \geq 2$
	and sufficiently large $d$, there exists $\delta=\delta(k,d,\delta_2)>0$ such that for all compact open subgroups $U$ of $G_f$, every $\Delta_{k}(G)$-invariant coupling $\xi$ of $(X,m)$,
	functions $\phi_1,\ldots,\phi_k \in\cC_c^\infty(X)^U$,
	and elements $g_1,\ldots,g_k \in G$, we have
		\[
		| \xi((g_1 \cdot \phi_1)\otimes \cdots \otimes(g_k \cdot \phi_k)) - m(\phi_1) \cdots m(\phi_k) |
		\ll_{d,U,k}
		\mathfrak{H}(g_1,\ldots,g_k)^{-\delta} \, \cS_d(\phi_1) \cdots \cS_d(\phi_k).
		\]
	In particular, the above bound is uniform over all $\Delta_{k}(G)$-invariant $k$-couplings $\xi$ of $(X,m)$.
\end{theorem}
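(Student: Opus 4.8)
The plan is to reduce Theorem \ref{main1_coupl_Sarithmetic} to the single-group coupling statement (Theorem \ref{main1_coupl}), or rather to run the same inductive machinery that proves Theorem \ref{main1_coupl} but in the $S$-algebraic setting, using the hypotheses N1--N4 and the order-two estimate as the only analytic inputs. The point of stating it for $k$-couplings is that the induction on $k$ naturally produces couplings: if one fixes $g_k$ and pushes all the other functions forward, one wants to compare $\xi((g_1\cdot\phi_1)\otimes\cdots\otimes(g_k\cdot\phi_k))$ with a product in which the last factor has been split off, and the error term is itself a correlation of order $k-1$ for a new coupling. So the first step is to set up the correct inductive hypothesis: for every $k' < k$, the bound holds uniformly over all $\Delta_{k'}(G)$-invariant $k'$-couplings. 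The base case $k=2$ is precisely the hypothesis of the theorem, since a $2$-coupling invariant under $\Delta_2(G)$ is handled by the given order-two estimate (one may even reduce to $\xi = m_{\Delta_2(X)}$, but the argument does not need this).

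The core of the induction is the standard ``splitting + thickening'' argument familiar from Mozes-type proofs of multiple mixing, now made quantitative. Given $g_1,\dots,g_k$, relabel so that $\mathfrak H(g_1,\dots,g_k) = \mathrm H(g_1^{-1}g_k)$ (or whichever pair realizes the minimum), and write $h_i = g_k^{-1} g_i$ so that $h_k = e$; using $\Delta_k(G)$-invariance of $\xi$ one may replace each $g_i$ by $h_i$. The idea is that $h_1$ is ``large'' while we have no control on $h_2,\dots,h_{k-1}$. We want to decouple the first factor from the rest. Choose a one-parameter unipotent (or more generally ``expanding'') subgroup direction inside $G$ along which $h_1$ contributes growth, and flow the function $\phi_1$ by a small amount $t$ in a transverse contracting direction; Properties N2/N2$'$ bound the cost $\|\phi_1 - u_t\cdot\phi_1\|_\infty$ linearly in $t$ (resp. in $\|\Ad(u_t)-\mathrm{id}\|$), while N3 controls the growth of $\cS_d(u_t\cdot\phi_1)$ by $\|\Ad(u_t)\|^\sigma$. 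Averaging $\phi_1$ over $u_t$, $t\in[0,T]$, and using the order-two mixing estimate to say that this average is close (in $\cL^2$, hence by N1 and N4 in a usable norm) to the constant $m(\phi_1)$ — because $u_t h_1$ runs off to infinity at rate comparable to $\mathrm H(h_1)^{c}$ once $T$ is a small power of $\log \mathrm H(h_1)$ — we replace $\phi_1$ by $m(\phi_1)$ plus a controlled error. What remains is $m(\phi_1)$ times a $(k-1)$-fold correlation for the coupling $\xi'$ obtained by integrating out the first coordinate; $\xi'$ is a $\Delta_{k-1}(G)$-invariant $(k-1)$-coupling, so the inductive hypothesis applies with $\mathfrak H(h_2,\dots,h_{k-1},e) \ge \mathfrak H(g_1,\dots,g_k)$ minus the translation by $h_1$ — here one must be slightly careful, but since we only moved $\phi_1$ and $h_1$ was the large one, the pairwise distances among the \emph{remaining} indices are unchanged, so the new width is still at least $\mathfrak H(g_1,\dots,g_k)$, possibly shrunk by the thickening parameter $T$, which is logarithmically small. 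Optimizing the split between the error from the order-two step (polynomially small in $\mathrm H(h_1)$, i.e. in $e^{-\delta_2 \cdot (\text{power of }\log)}$ after the thickening) against the Sobolev-norm growth $\|\Ad(u_t)\|^\sigma$ and the N4 loss $r$ in the product rule yields the final exponent $\delta(k,d,\delta_2)>0$, with the $-\delta$ power of $\mathfrak H$.

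The adaptation from the Lie-group case to the $S$-algebraic case is largely formal: one carries the estimates factor-by-factor over $S$, using N2 at Archimedean places and N2$'$ at non-Archimedean places, and using that $\mathrm H(g) = \prod_v \|g_v\|_v$ is proper and comparable to $\|\Ad(g)\|$ up to polynomial factors; the thickening at a non-Archimedean place $v$ uses a small compact open subgroup of $G_v$ in place of a one-parameter flow, and the $U$-invariance is preserved throughout (which is why the implied constants depend on $U$). The uniformity over couplings comes for free because no step uses anything about $\xi$ beyond its marginals being $m$ and its $\Delta_k(G)$-invariance. I expect the main obstacle to be bookkeeping the choice of the expanding/contracting directions and the thickening subgroup so that (a) flowing $\phi_1$ genuinely makes $u_t h_1$ escape to infinity at a quantitatively controlled rate in the height $\mathrm H$ — this is where one needs the structure theory of $G_v$, essentially the existence of enough one-parameter subgroups with the required growth, contraction, and mixing properties alluded to in the introduction — and (b) the pairwise-distance bookkeeping: ensuring that after the substitution $g_i \mapsto h_i$ and the thickening of the first factor, the width controlling the $(k-1)$-fold correlation is still bounded below by a definite fraction of the original $\mathfrak H$. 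Once these are set up carefully the rest is the routine optimization of parameters.
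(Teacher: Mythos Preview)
Your high-level plan --- induct on $k$ uniformly over all $\Delta_k(G)$-invariant couplings, use a one-parameter flow together with the order-two estimate, and let N1--N4 do the analytic work --- is exactly the paper's strategy. But the specific inductive step you describe has a real gap.

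You propose to peel off a single factor: pick the pair $(1,k)$ realising the \emph{minimum} $\mathfrak{H}$, normalise so that $h_k=e$, choose a unipotent direction $V$ along which $h_1$ ``contributes growth'', average, and replace $\phi_1$ by $m(\phi_1)$, leaving a $(k-1)$-coupling. The difficulty is that the only flows preserving $\eta=g_{[k]}^{-1}\cdot\xi$ are the \emph{diagonal} ones, so in factor $j$ the flow is $\exp(t\,\Ad(g_j^{-1}g_s)Z)$ with speed $w_j=\|\Ad(g_j^{-1}g_s)Z\|$. Your step needs factor $1$ to be fast and \emph{every} other factor slow, but you only know that $h_1$ realises the minimum; nothing prevents some $h_i$ ($2\le i\le k-1$) from being much larger, and then any direction expanded by $h_1^{-1}$ is expanded even more by $h_i^{-1}$, so factor $i$ moves faster than factor $1$ and is certainly not Lipschitz-close to stationary. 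Concretely, take $G=\SL_2(\bR)$, $k=3$, $h_3=e$, $h_1=a(T)$, $h_2=a(2T)$ with $a(t)=\mathrm{diag}(e^t,e^{-t})$: the pair $(1,3)$ realises the minimum, but for either unipotent root direction the speed in factor $2$ dominates that in factor $1$, so there is no choice of $T^*$ making factor $1$ mix while factor $2$ barely moves.

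The paper's argument fixes this by working with the pair realising the \emph{maximum} $Q=\max_{i\neq j}\|g_i^{-1}g_j\|_{\rm op}$ (Lemma~\ref{lemma1_outline_main1_padic}(ii)): this produces a nilpotent $Z$ for which the normalised speeds $1=w_1\ge\cdots\ge w_k$ satisfy $w_k\ll Q^{-1}$, hence span a range of size $\gtrsim q^{\varepsilon}$. One does \emph{not} try to isolate a predetermined index; instead a pigeonhole of the $k$ points $\theta^i$ against the $k-1$ intervals $[w_{p+1},w_p)$ locates an index $p$ and a scale $T$ with $w_pT\ge q^{\delta}$ and $w_{p+1}T\le q^{-\delta}$. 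Splitting $[k]=I\sqcup J$ with $I=[1,p]$, $J=[p+1,k]$ and averaging the diagonal flow over $|t|\le T$, the $I$-block genuinely mixes (order-two estimate, Property~2) while the $J$-block is Lipschitz-close to stationary (Property~3). This reduces $\dist_{N_{[k]}}(\eta,m_{[k]})$ to $\dist_{M_I}(\eta_I,m_I)$ and $\dist_{M_J}(\eta_J,m_J)$ via the general coupling estimate (Proposition~\ref{mainest}), and one inducts on \emph{both} pieces --- which is why the inductive hypothesis \eqref{k-1ass_1} is stated for all proper $I\subsetneq[k]$, not just $|I|=k-1$.

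A smaller remark: at non-Archimedean places the paper still uses a genuine one-parameter unipotent flow $t\mapsto\exp(tZ)$ with $t\in\bQ_p$; the compact open subgroup $U$ enters only through the function spaces and the implicit constants, not as a replacement for the flow.
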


The proofs of Theorem \ref{main1_Sarithmetic} and \ref{main1_coupl_Sarithmetic} will be given in Section \ref{sec:salg}. We note that the uniformity in Theorem \ref{main1_coupl_Sarithmetic}
will be crucial in our analysis of higher order correlations on adele groups 
in the next section.

\subsection{Adele groups}

Let $\bG\subset \hbox{GL}_n$ be a simply connected absolute simple algebraic group defined over a number field $F$. Let $\bG(\bA_F)$ be the corresponding adele group
and 
$$
X:=\bG(F)\backslash\bG(\bA_F)
$$
equipped with the invariant probability measure $m$.
For each $v\in \mathcal{V}_F$, we fix a norm $\|\cdot \|_v$ on $\hbox{M}_n(F_v)$
which is the $\max$ norm for almost all places $v$. 
The height function $\hbox{H}: \bG(\bA_F)\to \mathbb{R}^+$ is defined by
\begin{equation}
\label{eq:HHH}
\hbox{H}(g):=\prod_{v\in\mathcal{V}_F} \|g\|_v, \quad \hbox{for $g=(g_v)_{v\in\mathcal{V}_F}\in \bG(\bA_F)$}.
\end{equation}
We note that $\hbox{H}$ is a proper function on $\bG(\bA_F)$ (see, for instance, \cite[Lemma~2.5]{GMO}).

We denote by $G_\infty$ the product of $\bG(F_v)$ over the Archemdean places $v$ and 
by $G_f$ the group of finite adeles. Also 
we denote by $U_\infty$ 
the product of $\bG(F_v)$ over the Archemedean places $v$ such that 
$\bG(F_v)$ is compact.
Given a subgroup $U$ of $\bG(\bA_F)$, we denote by $\cC_c^\infty(X)^U$
the algebra of compactly supported functions on $X$ which are smooth with respect to the action of $G_\infty$ and are $U$-invariant. When $W$ is a compact open subgroup of $G_f$,
we introduce a family of Sobolev norms $\cS_{d,W}$ on the algebras $\cC_c^\infty(X)^W$
(see Section \ref{outline_main1_adele}).
We establish the following generalisation of \cite[Theorem~3.27]{GMO}
for $U_\infty$-invariant functions.

\begin{theorem}[Exponential mixing of all orders for adele groups]
\label{main2}
Let $\bG$ be a simply connected absolutely simple linear algebraic group defined over 
a number field $F$  and $X = \bG(F) \backslash \bG(\bA_F)$
equipped with the invariant probability measure $m$ on $X$.
We assume that $\bG$ is isotopic over $F_v$ for some Archemedian $v\in\mathcal{V}_F$.

Then, for every $k \geq 2$ and sufficiently large $d$, there exists $\delta=\delta(k,d) > 0$ such
that for every compact open subgroup $W$ of $G_f$, $U_\infty$-invariant functions $\phi_1, \ldots, \phi_k \in \cC^\infty_c(X)^{W}$, and elements $s_1,\ldots,s_k \in \bG(\bA_F)$,
we have
\[
| m((s_1 \cdot \phi_1) \cdots (s_k \cdot \phi_k)) - m(\phi_1) \cdots m(\phi_k) |
\ll_{d,W,k}
\mathfrak{H}(s_1,\ldots,s_k)^{-\delta} \, \cS_{d,W}(\phi_1) \cdots \cS_{d,W}(\phi_k),
\]
where 
$$
\mathfrak{H}(s_1,\ldots,s_k):=\min_{i\ne j} \hbox{\rm H}(s_i^{-1} s_j).
$$
\end{theorem}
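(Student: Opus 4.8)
The plan is to deduce Theorem \ref{main2} from Theorem \ref{main1_Sarithmetic} and Theorem \ref{main1_coupl_Sarithmetic} by reducing the adelic statement to an $S$-algebraic one, exploiting the uniformity in the coupling. First I would fix a compact open subgroup $W$ of $G_f$ and the $U_\infty$-invariant test functions $\phi_1,\dots,\phi_k\in\cC_c^\infty(X)^W$. Since each $\phi_i$ is compactly supported and $W$-invariant, and since $\bG(\bA_F)$ is covered by finitely many translates of $\bG(F)\cdot(G_\infty\times W)$, all of $\phi_1,\dots,\phi_k$ are controlled by finitely many "arithmetic" pieces: the strong approximation theorem (applicable because $\bG$ is simply connected and isotropic at an Archimedean place) shows that $\bG(F)$ is dense in $G_f$, so that for the compact open subgroup $W$ we can write $X$ as a finite union of pieces each isomorphic to a homogeneous space $\Gamma_W\backslash G_\infty$ where $\Gamma_W=\bG(F)\cap(G_\infty\times W)$ is an arithmetic lattice in $G_\infty$. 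Correspondingly, after enlarging $S$ to a finite set $S\supseteq S_\infty$ of places so that $W=\prod_{v\in S_f}W_v\times\prod_{v\notin S}\bG(\cO_v)$ and so that the elements $s_1,\dots,s_k$ lie (up to harmless modification) in $\bG(F_S):=\prod_{v\in S}G_v$, the whole problem localizes: the action of $\bG(\bA_F)$ on $X$ restricted to $\bG(F_S)$ is, on the relevant $W$-invariant subspace, the action of an $S$-algebraic group $G_S=\prod_{v\in S}G_v$ on a finite-volume homogeneous space $X_S=\Gamma_S\backslash G_S$. The Sobolev norms $\cS_{d,W}$ are set up (in Section \ref{outline_main1_adele}) precisely so that they restrict to norms of the type $\cS_d$ on $\cC_c^\infty(X_S)^{U}$ with $U=\prod_{v\in S_f}W_v$ satisfying Properties N1--N4, and the two-point mixing hypothesis of Theorem \ref{main1_Sarithmetic} holds with $\hbox{H}$ the $S$-adic height, by \eqref{eq:mixH} (which as noted follows from \cite[Theorem~3.27]{GMO} and local bounds on matrix coefficients).

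The next step is to check that the height functions match up. The adelic height $\hbox{H}(g)=\prod_{v\in\mathcal{V}_F}\|g\|_v$ restricted to an element $g\in\bG(F_S)\subset\bG(\bA_F)$ (embedded by putting $1$ at the remaining places) equals the $S$-height $\prod_{v\in S}\|g\|_v$, since $\|1\|_v=1$ for the max norm at all $v\notin S$ (recall $\bG\subset\GL_n$ and $\|\cdot\|_v$ is the max norm for almost all $v$, and we can choose $S$ large enough to absorb the exceptions). Hence $\mathfrak{H}(s_1,\dots,s_k)=\min_{i\ne j}\hbox{H}(s_i^{-1}s_j)$ computed adelically agrees with the quantity appearing in Theorem \ref{main1_Sarithmetic} applied to $G_S$. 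There is a small subtlety: a given element $s_i\in\bG(\bA_F)$ need not lie in $\bG(F_S)$. To handle this, I would use that $\bG(\bA_F)=\bG(F)\bG(F_S)(\prod_{v\notin S}\bG(\cO_v))$ for $S$ large (another consequence of strong approximation together with $W$-invariance of the $\phi_i$, absorbing the $\bG(\cO_v)$-part into $W$), so each $s_i$ may be replaced by an element of $\bG(F_S)$ modifying the correlation by nothing and changing $\hbox{H}(s_i^{-1}s_j)$ only by a bounded multiplicative constant — one has to verify this last point, which follows because the discarded components lie in the fixed compact group $\prod_{v\notin S}\bG(\cO_v)$ on which $\|\cdot\|_v$ is identically $1$. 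Then Theorem \ref{main1_Sarithmetic} (or Theorem \ref{main1_coupl_Sarithmetic}, if one wants the coupling statement, e.g.\ applied to the diagonal coupling $m_{\Delta_k(X_S)}$, or more generally to push-forwards of adelic couplings) yields exactly
\[
|m((s_1\cdot\phi_1)\cdots(s_k\cdot\phi_k))-m(\phi_1)\cdots m(\phi_k)|\ll_{d,U,k}\mathfrak{H}(s_1,\dots,s_k)^{-\delta}\,\cS_d(\phi_1)\cdots\cS_d(\phi_k),
\]
and translating $\cS_d$ back to $\cS_{d,W}$ and $U$ back to $W$ gives the claimed bound, with the implied constant depending on $W$ (equivalently on $S$) as allowed.

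The main obstacle I anticipate is the bookkeeping around strong approximation and the choice of $S$: one must choose $S$ depending on $W$ (to diagonalize $W$ as a product over places) and on the support of the $\phi_i$ and the elements $s_i$, then argue that (i) the action of $\bG(\bA_F)$ on the $W$-fixed, compactly-supported functions genuinely factors through the $S$-algebraic quotient in a way compatible with the measure $m$ and its marginals, (ii) the $S$-algebraic Sobolev norms one obtains by restriction really do satisfy N1--N4 with constants depending only on $d$ and $U$ (this is where one invokes the construction referenced via \cite[Appendix~A]{EMMV} and Section \ref{outline_main1_adele}), and (iii) the passage from a general $s_i\in\bG(\bA_F)$ to one in $\bG(F_S)$ costs only a bounded factor in the height. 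Point (iii) is where the isotropy-at-an-Archimedean-place hypothesis and simple-connectedness are essential, since they guarantee $\bG(F_S)$ is noncompact and dense-modulo-$\bG(F)$ enough that the reduction is lossless. Once this localization is set up cleanly, the theorem is an immediate corollary of the $S$-algebraic results; no new dynamical input is needed beyond what Theorems \ref{main1_Sarithmetic} and \ref{main1_coupl_Sarithmetic} already provide.
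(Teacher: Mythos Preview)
Your reduction has a genuine gap: it loses the non-Archimedean contribution to the height. Fix $S$ depending only on $W$ (if instead you let $S$ grow with the $s_i$, the implied constant $\ll_{d,U,k}$ in Theorem \ref{main1_Sarithmetic} depends on $U$ and hence on the $s_i$, which destroys uniformity). Write $s_i=(g_i,d_i)\in G\times D$ with $G$ the product of non-compact Archimedean factors and $D$ the rest, and use strong approximation to pick $\gamma_i\in\Gamma$ with $d_i\in\gamma_i W$. The adelic correlation then unwinds to $(g_{[k]}^{-1}\cdot\xi_{\gamma_{[k]}})(\phi_1\otimes\cdots\otimes\phi_k)$, where $\xi_{\gamma_{[k]}}$ is the invariant measure on the closed orbit $\Gamma_W^k\gamma_{[k]}^{-1}\Delta_{[k]}(G)\subset(X_W)_{[k]}$. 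This is a $\Delta_{[k]}(G)$-invariant coupling, but it is \emph{not} the diagonal coupling unless all $\gamma_i$ lie in the same $\Gamma_W$-coset. So you must invoke Theorem \ref{main1_coupl_Sarithmetic}; but its bound is in terms of $\min_{i\ne j}\|g_i^{-1}g_j\|_{\rm op}$ and sees only the Archimedean components. When the $g_i$ are all equal while the $d_i$ are far apart, the adelic quantity $\mathfrak{H}(s_{[k]})$ is large yet Theorem \ref{main1_coupl_Sarithmetic} yields no decay at all. Your assertion that replacing $s_i$ by an element of $\bG(F_S)$ ``modifies the correlation by nothing'' is exactly where this breaks: the leftover $\gamma_i\in\Gamma$ act nontrivially and are what produce the non-diagonal coupling.

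The paper fills this gap with a second, independent dynamical input you do not mention: the effective equidistribution theorem of Einsiedler--Margulis--Venkatesh \cite[Theorem~1.3]{EMV}. In the regime where $Q:=\max_{i\ne j}\|g_i^{-1}g_j\|_{\rm op}$ is small, one bounds $\dist_{d,[k]}(\xi_{\gamma_{[k]}},(m_W)_{[k]})$ by a negative power of the volume of the orbit supporting $\xi_{\gamma_{[k]}}$, and that volume is controlled (Lemma \ref{l:volume}) by $q_D=\min_{i\ne j}\hbox{H}(d_i^{-1}d_j)$. This is then combined with the $S$-algebraic coupling bound (Theorem \ref{main1_padic}, whose extra parameter $\eps$ exists precisely for this purpose) in the complementary regime $Q\ge q^\eps$ by optimizing over $\eps$. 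Thus your claim that ``no new dynamical input is needed'' is incorrect; the EMV theorem is essential, and is the reason the hypothesis that $\bG$ be isotropic at an Archimedean place appears in Theorem \ref{main2}.
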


Since $\hbox{H}$ is a proper function on $\bG(\bA_F)$, Theorem \ref{main2} in particular implies that the action of $\bG(\bA_F)$ on $X = \bG(F) \backslash \bG(\bA_F)$
is mixing of all orders. This was previously established in \cite{GBTT},
but the method in \cite{GBTT} relies on the theory of unipotent flows
and does not provide any explicit estimates.
In \cite{BEG}, we apply Theorem \ref{main2} to establish effective
counting estimate for the number of rational points
lying on the compactifications of the varieties of the form $\bG^k/\Delta_k(\bG)$.

It is quite likely that the assumption in Theorem \ref{main2} that $\bG$ is isotopic over $F_v$ for some Archemedian $v\in\mathcal{V}_F$ can be removed. It is needed because 
our argument relies on the results from \cite{EMV} 
which are only proved for real homogeneous space. 
Once an $S$-algebraic version of \cite{EMV} is developed,
Theorem \ref{main2} will follow for general $\bG$ using our method.

The proof of Theorem \ref{main2} will be given in Section \ref{outline_main1_adele}.

\subsection{Organisation of the paper}
In Section \ref{outline_main1_0}, we discuss higher order correlations for 
semisimple Lie groups. 
In particular, we reformulate our main results in terms of the Wasserstein distance
for couplings and prove the results from \S\ref{s:ss}. 
Next, we apply the established correlation
estimates in Section \ref{sec:conf} to deduce Corollary \ref{main1_cor} regarding existence of approximate configurations in lattice subgroups. 
In Section \ref{sec:salg} we analyse higher order correlations for $S$-algebraic groups,
and in Section \ref{outline_main1_adele} --- for adele groups.
The proofs in Sections \ref{outline_main1_0} and \ref{sec:salg}
rely on a general inductive estimate for couplings 
(Proposition \ref{mainest}) which is established in Section \ref{PrfABCDT}.
It will become apparent in Section \ref{PrfABCDT} that our method
can be applied more generally to study couplings which are invariant under a flow
satisfying suitable regularity, growth, and mixing properties.
Also in Section \ref{sec:wasserstein}, we discuss basic properties of 
the Wasserstein distance which are used in the paper.

\section{Higher-order correlations for semisimple Lie groups}
\label{outline_main1_0}


\subsection{Preliminaries}
\label{Subsec:notation}
Let $G$ be a connected semisimple Lie group with finite center.
We fix a Cartan subgroup $A$ of $G$. We denote by $\Sigma\subset \hbox{Hom}(A,\mathbb{R}_+^\times)$  the root system with respect to the adjoint action of $A$ on the Lie algebra $\gog=\hbox{Lie}(G)$.
Then we have the root space decomposition 
\begin{equation}
\label{eq:roots}
\gog = \gog^0 + \bigoplus_{\alpha \in \Sigma} \gog^\alpha,
\end{equation}
where $\gog^0$ is the centraliser of  the Lie algebra of $A$ in $\gog$, and 
\[
\gog^\alpha := \big\{ Z \in \gog \, : \,\, \Ad(a)Z = \alpha(a) Z, \enskip \textrm{for all $a \in A$}  \big\}.
\]
We fix a choice of the subset $\Sigma^+\subset \Sigma$ of positive roots and denote by 
$$
A^+:=\big\{a\in A:\, \alpha(a)\ge 1, \enskip \textrm{for all $\alpha \in \Sigma^+$}  \big\}
$$
the corresponding closed positive Weyl chamber in $A$. There exists a maximal compact subgroup $K$ of 
$G$ such that the \emph{Cartan decomposition} 
\begin{equation}
\label{eq:cartan}
G=KA^+K
\end{equation}
holds. 
It is a standard fact (see e.g. \cite[Ch.~9]{Hel}) that 
if $g=k_1 a_g k_2$ for $k_1, k_2 \in K$ and $a_g\in A^{+}$, then the component $a_g$ is unique.
We call the component $a_g$ the \emph{Cartan projection} of $g$. \\

Let $\langle \cdot, \cdot \rangle$ be an $\Ad(K)$-invariant inner product on $\gog$, and denote by $\| \cdot \|$ the 
corresponding norm on $\gog$. Let $\rho_G$ denote the left-invariant distance function on $G$ induced by the 
Riemannian metric corresponding to $\langle \cdot, \cdot \rangle$. 
We note that $\rho_G$ is bi-$K$-invariant.

We also define a sub-multiplicative function 
$\|\cdot\|_{\rm op}$ on $G$ by
\begin{equation}
\label{def*}
\|g\|_{\rm op} := \max\big\{ \|\Ad(g)Z\| \, : \, Z\in\gog\quad\hbox{with $\|Z\| = 1$} \big\}.
\end{equation}
We note that since $G$ is semisimple, every transformation $\hbox{Ad}(g)$ satisfies $\det(\hbox{Ad}(g))=1$, so that 
it has at least one eigenvalue whose absolute value is greater or equal to one. 
This implies that
$$
\|g\|_{\rm op}\ge 1\quad\hbox{ for all $g\in G$.}
$$

The following lemma summarises basic properties of the functions $\rho_G$ and $\|\cdot \|_{\rm op}$
that will be used in the proof.

\begin{lemma}
\label{lemma1_outline_main1}
\begin{enumerate}
  \setlength\itemsep{0.5em}

\item[(i)] For every $g\in G$,
$$
\|g\|_{\rm op} =\max_{\alpha \in \Sigma^{+}} \alpha(a_g),  
$$
where $a_g \in A^+$ denotes the Cartan projection of the element $g$.

\item[(ii)] For every $g\in G$, there exists $Z \in \gog$ 
such that $\Ad(Z)$ is nilpotent, $\|Z\| = 1$, and
\[
\|g\|_{\rm op} = \|\Ad(g)Z\|.
\]

\item[(iii)] There exist constants $c_1\ge 1$ and $c_2 > 0$ such that 
\[
c_1^{-1} \log \|g\|_{\rm op} - c_2 \leq \rho_G(g,e_G) \leq c_1 \log \|g\|_{\rm op} + c_2
\] 
for all $g \in G$.

\item[(iv)] There is a constant $c_3 \geq 1$ such that
for all every $X \in \gog $ such that $\hbox{\rm Ad}(X)$ is nilpotent,
\[
c_3^{-1}\, \max(1,\|X\|) \leq \|\exp(X)\|_{\rm op} \leq c_3\, \max(1,\|X\|)^{\dim(G)}.
\]
\end{enumerate}
\end{lemma}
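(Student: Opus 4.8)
The plan is to prove the four statements essentially independently, using the root space decomposition \eqref{eq:roots} and the Cartan decomposition \eqref{eq:cartan}, together with the $\Ad(K)$-invariance of the inner product $\langle\cdot,\cdot\rangle$.

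For part (i), I would first reduce to the case $g = a_g \in A^+$: since $\|\cdot\|_{\rm op}$ is defined via $\Ad$ and the inner product is $\Ad(K)$-invariant, writing $g = k_1 a_g k_2$ gives $\|g\|_{\rm op} = \|a_g\|_{\rm op}$, because $\Ad(k_1), \Ad(k_2)$ are orthogonal transformations. Then for $a \in A^+$, decompose $\gog$ orthogonally according to \eqref{eq:roots} (the decomposition is orthogonal because the summands are distinct eigenspaces of the self-adjoint operators $\Ad(a)$ for $a$ in a suitable one-parameter subgroup — here I should be a little careful and pick $a = \exp(H)$ with $H$ regular so that $\ad(H)$ is self-adjoint with distinct eigenvalues $\alpha(H)$ on the $\gog^\alpha$). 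On each $\gog^\alpha$, $\Ad(a)$ acts by the scalar $\alpha(a)$, and on $\gog^0$ it acts by $1$. Hence $\|\Ad(a)Z\|$ is maximized over the unit sphere by putting all the weight on the root space with the largest $|\alpha(a)|$; since $a \in A^+$ we have $\alpha(a) \ge 1$ for $\alpha \in \Sigma^+$ and $\alpha(a) = \beta(a)^{-1} \le 1$ for the corresponding negative root, so the maximum is $\max_{\alpha\in\Sigma^+}\alpha(a_g)$, and it is $\ge 1$.

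For part (ii), the maximizing unit vector $Z$ found in the proof of (i) lies in a single root space $\gog^\alpha$ with $\alpha \in \Sigma^+$; any nonzero element of $\gog^\alpha$ with $\alpha$ a nonzero root has $\ad(Z)$ nilpotent (it raises the root grading, so some power kills $\gog$). Pulling back by the $K$-conjugation, the corresponding maximizing vector for general $g$ is $\Ad(k_2^{-1})Z$, which still has nilpotent $\ad$ (nilpotency is conjugation-invariant) and unit norm. For part (iii): by (i), $\log\|g\|_{\rm op} = \max_{\alpha}\log\alpha(a_g)$, which is a piecewise-linear function of $\log a_g$ comparable to $\|\log a_g\|$ up to multiplicative constants depending only on the finitely many roots, while $\rho_G(g,e_G) = \rho_G(a_g,e_G)$ (bi-$K$-invariance) is likewise comparable to the norm of $\log a_g$ in $\goa$ since $A^+$ is a flat totally geodesic cone — the two-sided comparison with additive constant $c_2$ absorbs the discrepancy near the origin where $\log\|g\|_{\rm op}$ vanishes on $K$ but $\rho_G$ need not. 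For part (iv), use that $\Ad(\exp X) = \exp(\ad X)$ with $\ad X$ nilpotent, so $\exp(\ad X) = \sum_{j=0}^{N-1}(\ad X)^j/j!$ is a polynomial of degree $\le N-1 \le \dim(G)-1$ in $\ad X$; this immediately gives the upper bound $\|\exp(X)\|_{\rm op} \ll \max(1,\|X\|)^{\dim G}$ by the triangle inequality and submultiplicativity of the operator norm. For the lower bound, $\ad X \ne 0$ when $X \ne 0$ (the centre is trivial on the Lie algebra level for semisimple $\gog$), and picking $Y$ with $(\ad X)Y \ne 0$ one gets a linear-in-$\|X\|$ lower bound on $\|\exp(\ad X)Y - Y\|$ for $\|X\|$ large, hence on $\|\exp(X)\|_{\rm op} - 1$; for $\|X\|$ bounded one uses $\|\exp(X)\|_{\rm op} \ge 1$ from semisimplicity, and adjusts $c_3$.

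The main obstacle is the bookkeeping with additive versus multiplicative constants in part (iii) and the $\dim(G)$ exponent in part (iv): these are not deep, but one must be careful that $\log\|g\|_{\rm op}$ degenerates (vanishes) on the compact part while $\rho_G(g,e_G)$ does too up to the bi-$K$-invariance reduction — so the real content is the comparison of the piecewise-linear Weyl-chamber norm $\max_\alpha\log\alpha(\cdot)$ with the Euclidean norm on $\goa$, which is standard finite-dimensional convexity. The orthogonality of the root space decomposition with respect to an $\Ad(K)$-invariant inner product (needed in (i)) should be stated carefully, since in general the restricted root spaces $\gog^\alpha$ for $\alpha\in\Sigma$ are mutually orthogonal precisely because of the $\ad$-self-adjointness coming from the Cartan involution compatible with $K$; I would invoke \cite[Ch.~9]{Hel} for this.
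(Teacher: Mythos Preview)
Your approach to parts (i), (ii), (iii), and the upper bound in (iv) is essentially the same as the paper's. One small slip: bi-$K$-invariance does \emph{not} give $\rho_G(g,e_G)=\rho_G(a_g,e_G)$ exactly; you only get $\rho_G(g,e_G)=\rho_G(a_g,k)$ for some $k\in K$, hence equality up to the diameter of $K$. The paper handles this the same way, absorbing the discrepancy into the additive constant $c_2$, so the conclusion is unaffected.

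The real gap is in the lower bound of (iv). Your claim ``picking $Y$ with $(\ad X)Y\ne 0$ one gets a linear-in-$\|X\|$ lower bound on $\|\exp(\ad X)Y-Y\|$'' does not work as stated: for a general such $Y$, the higher-order terms $(\ad X)^2Y,(\ad X)^3Y,\ldots$ in the exponential series can be large and cancel the linear term, and even if you choose $Y$ with $(\ad X)^2Y=0$ so that $\exp(\ad X)Y=Y+(\ad X)Y$, you only get $\|\exp(\ad X)\|\ge \|(\ad X)Y\|/\|Y\|-1$, which is \emph{not} automatically comparable to $\|\ad X\|$ uniformly over all nilpotent $X$. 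The missing ingredient is a uniform estimate $\|\exp(N)\|\gg\|N\|$ for nilpotent matrices $N$. The paper isolates this as a separate lemma (Lemma~\ref{l:norm}) and proves it by a compactness argument: one shows that
\[
c(N):=\max\{\|Nv\|:\ \|v\|=1,\ N^2v=0\}
\]
is bounded away from zero on the set of nilpotent $N$ with $\|N\|=1$, since $c(N)=0$ would force $\ker(N^2)=\ker(N)$ and hence $N=0$. This then gives $\|\exp(N)\|\ge c(N)-1\gg\|N\|$ for $\|N\|$ large, and one combines with $\|\exp(N)\|\ge 1$ for $\|N\|$ bounded. You should add this compactness step to close the argument.
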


In the proof of (iv), we will use the following lemma.
Since later we will also need a version of this lemma over $p$-adic local fields,
we formulate it more generally.

\begin{lemma}\label{l:norm}
	Fix a norm on $\hbox{\rm M}_n(\mathbb{K})$, where $\mathbb{K}$ is a locally compact normed field.
	Then there exists $c_0>0$ such that for every nilpotent matrix $X\in \hbox{\rm M}_n(\mathbb{K})$,
	$$
	\|\exp(X)\|\ge c_0\, \|X\|.
	$$
\end{lemma}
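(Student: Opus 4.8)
The plan is to prove Lemma~\ref{l:norm} by exploiting nilpotency to make the exponential series terminate, so that $\exp(X)$ is a polynomial in $X$ of degree less than $n$, and then to extract the linear term.

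\medskip

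\textbf{Step 1: Truncate the exponential.} Since $X\in\mathrm{M}_n(\mathbb{K})$ is nilpotent, $X^n=0$, so
\[
\exp(X)=I+X+\tfrac{1}{2!}X^2+\cdots+\tfrac{1}{(n-1)!}X^{n-1}.
\]
Hence $\exp(X)-I-X=\sum_{j=2}^{n-1}\tfrac{1}{j!}X^j$, which is a polynomial in $X$ with no constant or linear term. Write $P(X):=\exp(X)-I-X$.

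\medskip

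\textbf{Step 2: Reduce to a scaling estimate.} The claim $\|\exp(X)\|\ge c_0\|X\|$ is trivial when $\|X\|$ is bounded away from $0$ and bounded above, by a compactness argument on the (closed) set of nilpotent matrices of norm in a fixed compact range --- more precisely, the function $X\mapsto\|\exp(X)\|$ is continuous and nonvanishing on the set of nilpotent $X$ (as $\exp(X)$ is always invertible), so on $\{X\text{ nilpotent}: a\le\|X\|\le b\}$ it attains a positive minimum, and one divides by $b$. So the only issue is the regime $\|X\|\to 0$ (the large regime will actually not be needed here since the lemma is only a lower bound, but I record that the interesting behaviour is near $0$). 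For small $\|X\|$ I will use that $P(X)$ is quadratically small: since $P$ is a polynomial in the entries of $X$ whose every monomial has degree $\ge 2$, submultiplicativity of the chosen norm (or equivalence with a submultiplicative norm, at the cost of an absolute constant) gives
\[
\|P(X)\|\le \Big(\sum_{j=2}^{n-1}\tfrac{1}{j!}\|X\|^{j-2}\Big)\|X\|^2\le C_1\|X\|^2
\qquad\text{whenever }\|X\|\le 1,
\]
for a constant $C_1$ depending only on $n$ and the norm.

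\medskip

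\textbf{Step 3: Reverse triangle inequality.} From $\exp(X)=I+X+P(X)$ we get $\|\exp(X)-I\|\ge\|X\|-\|P(X)\|\ge\|X\|-C_1\|X\|^2$. Choosing $\eta:=\min(1,\tfrac{1}{2C_1})$, for $\|X\|\le\eta$ we obtain $\|\exp(X)-I\|\ge\tfrac12\|X\|$, and then $\|\exp(X)\|\ge\|\exp(X)-I\|-\|I\|$ is useless directly, so instead I argue: either $\|\exp(X)\|\ge\|I\|/2=:c'$, which combined with $\|X\|\le\eta$ already gives $\|\exp(X)\|\ge (c'/\eta)\|X\|$; or $\|\exp(X)\|<c'$, in which case $\|\exp(X)-I\|\le\|\exp(X)\|+\|I\|$ is also the wrong direction --- so let me instead simply bound $\|I\|\le\|\exp(X)\|+\|P(X)+X\|$, i.e. $\|\exp(X)\|\ge\|I\|-\|X\|-\|P(X)\|$, which for $\|X\|$ small is close to $\|I\|>0$ and hence bounded below by a positive constant; combined again with $\|X\|\le\eta$ this yields $\|\exp(X)\|\ge c''\|X\|$ on the small regime. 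Merging with the compactness bound from Step~2 on $\|X\|\ge\eta$ gives the lemma with $c_0:=\min(c'',\text{(compactness constant)})$.

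\medskip

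\textbf{Main obstacle.} The genuinely delicate point is the non-Archimedean case, where "$\|X\|$ small forces $\exp(X)$ close to $I$" must be handled carefully: over a $p$-adic field the coefficients $1/j!$ can have large absolute value (when $p\le j<n$), so the bound $\|P(X)\|\le C_1\|X\|^2$ in Step~2 needs the constant $C_1$ to absorb $\max_{2\le j<n}|1/j!|_v$, which is finite since there are only finitely many terms; this is harmless but must be stated, so that the constant $c_0$ is allowed to depend on $\mathbb{K}$ (equivalently on the residue characteristic) as well as on $n$ and the norm --- which is exactly what the statement permits. Beyond bookkeeping of that constant, the argument is elementary.
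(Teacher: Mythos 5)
Your argument handles the regimes $\|X\|\le\eta$ (Taylor expansion, Step 3) and $a\le\|X\|\le b$ (compactness, Step 2), but it never addresses $\|X\|\to\infty$, and the parenthetical claim in Step 2 that ``the large regime will actually not be needed here since the lemma is only a lower bound'' is exactly backwards. The statement requires $\|\exp(X)\|\ge c_0\|X\|$ for \emph{all} nilpotent $X$, so when $\|X\|$ is large you must show $\|\exp(X)\|$ is also large. A priori this is not obvious: $\exp(X)=I+X+\tfrac12 X^2+\cdots+\tfrac{1}{(n-1)!}X^{n-1}$ and the higher powers $X^j$ can have norm comparable to or much larger than $\|X\|$, so there is no reverse-triangle-inequality bound that survives when $\|X\|\ge 1$, and cancellation between the terms could in principle make $\|\exp(X)\|$ small. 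This unbounded regime is in fact the only substantive content of the lemma; the small-$\|X\|$ regime that you spend most of the proof on is trivial, since there $\|\exp(X)\|$ is bounded below by a constant near $\|I\|$, which dominates $c_0\|X\|$ outright. The compactness argument cannot be stretched to cover all of $\|X\|\ge\eta$ because as $b\to\infty$ the resulting constant $\min_{\eta\le\|X\|\le b}\|\exp(X)\|/b$ has no reason to stay bounded away from zero.

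The paper closes this gap with a different and essential idea: it restricts attention to vectors $v$ in $\ker(X^2)$, where the exponential series collapses exactly to $\exp(X)v=v+Xv$ with no higher-order terms and hence no possible cancellation. Setting $c(X):=\max\{\|Xv\|:\|v\|=1,\;X^2v=0\}$, one gets $\|\exp(X)\|\ge\|\exp(X)v\|=\|v+Xv\|\ge\|Xv\|-1$, so $\|\exp(X)\|\ge c(X)-1$. A compactness argument on the (compact) set of nilpotent matrices of norm one shows $c(X)\gg\|X\|$ (if the infimum of $c$ on that set were zero, one would get a nonzero nilpotent $X$ with $\ker X^2=\ker X$, forcing $X=0$), and combining with the trivial lower bound $\|\exp(X)\|\ge 1$ for unipotent matrices handles the small-$\|X\|$ range. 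To repair your proof you would need to supply some version of this ``kill the higher-order terms by restricting to $\ker(X^2)$'' mechanism (or an equivalent device) to control the unbounded regime; the Taylor-plus-compactness strategy alone does not reach it.
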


\begin{proof}
We fix a norm on $\mathbb{K}^n$. 
Since all the norms on $\hbox{\rm M}_n(\mathbb{K})$ are equivalent,
without loss of generality, we may assume that the norm in the lemma satisfies
\begin{equation}
\label{eq:normA}
\|Av\|\le \|A\|\, \|v\|\quad 
\hbox{for every $A\in \hbox{\rm M}_n(\mathbb{K})$ and $v\in \mathbb{K}^n$.}
\end{equation}
For a nilpotent matrix $X$, we set
$$
c(X):=\max\{\|Xv\|: v\in \mathbb{K}^n \hbox{ such that $\|v\|=1$ and $X^2v=0$} \}.
$$
Given any nilpotent $X\in \hbox{\rm M}_n(\mathbb{K})$
and $v\in \mathbb{K}^n$ such that $\|v\|=1$ and $X^2v=0$,
we have
\begin{align*}
\|\exp(X)\|\ge \|\exp(X)v\|= \|v+ X v\|\ge \|Xv\|-1.
\end{align*}
Hence, 
$$
\|\exp(X)\|\ge c(X)-1.
$$
We claim that
\begin{equation}
\label{eq:cx}
\inf\{c(X): \hbox{ nilpotent $X\in \hbox{\rm M}_n(\mathbb{K})$ such that 
	$\|X\|=1$}\}>0.
\end{equation}
Suppose that \eqref{eq:cx} fails. Since the function $c$ is continuous,
the infimum is achieved, and there exists nilpotent 
$X\in \hbox{\rm M}_n(\mathbb{K})$ with $\|X\|=1$ such that $c(X)=0$.
Then it follows that for every $v\in\mathbb{K}^n$, if
$X^2v=0$, then $Xv=0$, i.e., $\hbox{ker}(X^2)=\hbox{ker}(X)$.
This also implies that for every $\ell\ge 1$, we have
$\hbox{ker}(X^{\ell+1})=\hbox{ker}(X^\ell)$.
Since $X$ is nilpotent, we conclude that $X=0$,
but $\|X\|=1$. This contradiction shows that \eqref{eq:cx} holds.
Hence, there exists $c'_0>0$ such that for every nilpotent $X$,
$$
\|\exp(X)\|\ge c'_0\,\|X\|-1.
$$
Then $\|\exp(X)\|\ge c'_0/2\,\|X\|$ when $\|X\|\ge 2/c'_0$.

On the other hand, it is also clear that 
$\|\exp(X)\|^{-1} \|X\|$ is uniformly bounded when
$\|X\|\le 2/c'_0$. Indeed, since $\exp(X)$ is unipotent,
it follows from \eqref{eq:normA} that $\|\exp(X)\|\ge 1$.

Combining these two bounds completes the proof.
\end{proof}

\begin{proof}[Proof of Lemma \ref{lemma1_outline_main1}]
Since the norm on $\gog$ is $\hbox{Ad}(K)$-invariant, it is clear that
$\|g\|_{\rm op}=\|a_g\|_{\rm op}$. 
We note that the root decomposition \eqref{eq:roots} is orthogonal.
Decomposing an element $Y\in \gog$ with respect to \eqref{eq:roots}, we obtain that for $a\in A^+$,
\begin{align}\label{eq:norm0}
\|\hbox{Ad}(a)Y\|^2&=\sum_{\alpha\in \Sigma\cup \{0\}} \alpha(a)^2\|Y_\alpha\|^2\le \left(\max_{\alpha \in \Sigma\cup\{0\}} \alpha(a)^2\right) \|Y\|^2\\
&=
\left(\max_{\alpha \in \Sigma^+} \alpha(a)^2\right) \|Y\|^2.\nonumber
\end{align}
Moreover, if we choose $\alpha_0\in \Sigma^+$ such that 
$\alpha_0(a)=\max_{\alpha \in \Sigma^+} \alpha(a)$
and $Y\in \gog^{\alpha_0}$, then the equality in \eqref{eq:norm0} holds.
Hence, we deduce that for every $g\in G$, there exists $Y$ contained in
single root space such that $\|Y\|=1$ and
\begin{equation}
\label{eq:g}
\|g\|_{\rm op} =\|a_g\|_{\rm op}= \|\Ad(a_g)Y\| = \max_{\alpha \in \Sigma} \alpha(a_g)= \max_{\alpha \in \Sigma^+} \alpha(a_g).
\end{equation}
This proves (i).

The claim (ii) is deduced from \eqref{eq:g}. Given $g=k_1ak_2 \in KA^+K$, we have 
$$
\|g\|_{\rm op}=\|\Ad(a)Y\|=\|\Ad(g) \Ad(k_2)^{-1}Y\|.
$$
Since $Y$ is contained in a single root space, the map $\Ad(Y)$ is nilpotent,
 and the map $\Ad(Z)$ with $Z=\Ad(k_2)^{-1}Y$ is also nilpotent. 
 We also have $\|Z\|=\|Y\|=1$. This implies (ii).

To prove (iii), we observe that since the metric $\rho_G$ is left-invariant, and $K$ is compact, there exists $c_2'>0$ such that for every $g=k_1a_gk_2\in KA^+K$, 
$$
\rho_G(a_g,e_G)-c'_2\le \rho_G(g,e_G)\le \rho_G(a_g,e_G)+c'_2.
$$
Since $A$ is abelian, there exists $c'_1\ge 1$ such that for all $X\in \hbox{Lie}(A)$,
$$
(c'_1)^{-1}\, \|X\|\le \rho_G(\exp(X),e_G)\le c'_1\, \|X\|.
$$
We also observe that the map
$$
X\mapsto \max_{\alpha\in\Sigma} (\log\circ \alpha\circ \exp)(X),\quad X\in\hbox{Lie}(A),
$$
defines a norm on $\hbox{Lie}(A)$. Hence, it follows from \eqref{eq:g}
that there exists $c_1''\ge 1$ such that
for all $X\in\hbox{Lie}(A)$,
$$
(c''_1)^{-1}\, \|X\|\le \log\|\exp(X)\|_{\rm op} \le c''_1\, \|X\|.
$$
Combining these estimates, we deduce (iii).

Now we proceed with the proof of (iv).
We introduce the operator norm on $\hbox{End}(\gog)$.
Since $\hbox{Ad}(X)$ is a nilpotent transformation, we obtain
\begin{equation}
\label{eq:ad}
\hbox{Ad}(\exp(X))Z=\exp(\hbox{Ad}(X))Z=\sum_{i=0}^{\dim(G)} \frac{1}{i!}\hbox{Ad}(X)^i Z
\end{equation}
for all $Z\in \gog$.
Hence, it follows that
$$
\|\exp(X)\|_{\rm op}\ll \max (1,\|\hbox{Ad}(X)\|)^{\dim(G)}
\ll \max(1,\|X\|)^{\dim(G)}.
$$
This proves one of the inequalities in (iv).
To prove the other inequality, we use Lemma \ref{l:norm}.
to obtain
$$
\|\exp(X)\|_{\rm op}=\|\exp(\hbox{Ad}(X))\|\gg \|\hbox{Ad}(X)\|.
$$
Also since $\exp(\hbox{Ad}(X))$ is unipotent, $\|\exp(\hbox{Ad}(X))\|\ge 1$.
Hence,
$$
\|\exp(X)\|_{\rm op}\gg \max(1,\|\hbox{Ad}(X)\|).
$$
Since $\gog$ is semisimple, the map $\Ad:\mathfrak{g}\to \hbox{End}(\mathfrak{g})$ is an embedding, and 
$$
\|\hbox{Ad}(X)\|\gg \|X\|\quad\hbox{for all $X\in\gog$.}
$$
Thus, we deduce the other inequality in (iv). 
\end{proof}

\subsection{Sobolev norms}\label{sec:sobolev}

Let $L$  be a connected Lie group and 
$\Gamma$ a lattice in  $L$. 
We consider the space $X = \Gamma \backslash L$  
equipped with the invariant probability measure $m$
and a Riemannian metric induced by a left-invariant Riemannian metric $\rho_L$ on $L$.
Let $\cC_c(X)$ denote the space of compactly 
supported continuous functions on $X$, endowed with the topology of uniform convergence on
compact subsets, and let $\cP(X)$ denote the space of Borel probability measures on $X$, which we view as a 
subspace of the dual space of $\cC_c(X)$. Even though $L$ acts naturally on $X$ from the right, we shall also 
write this action as a \emph{left} action, i.e., if $x = \Gamma s$, we set
\[
l\cdot x = \Gamma s l^{-1}, \quad \textrm{for $l \in L$}.
\]
We note that $L$ also acts on $\cC_c(X)$ and on $\cP(X)$ by
\[
(l \cdot \phi)(x) = \phi(l^{-1} \cdot x) \qand (l \cdot \nu)(\phi) = \nu(l^{-1} \cdot \phi),
\]
for $\phi \in\cC_c(X)$ and $\nu \in \cP(X)$. Finally, we denote by $\cC_c^\infty(X)$ the space of infinitely differentiable
compactly supported functions on $X$. \\

If $x = \Gamma s \in X$, we denote by $r(x)$ the 
\emph{injectivity radius} at $x$, i.e. the smallest $r > 0$ such that the quotient map $L \ra X$, restricted to 
a closed ball of $\rho_L$-radius $r$ around $s$, is injective. If $\Gamma$ is a co-compact lattice, this 
number stays uniformly away from zero. However, if $\Gamma$ is not co-compact, then $r(x)$ 
tends to zero as $x$ moves into the cusps of $X$. \\

Let $\mathfrak{l}=\hbox{Lie}(L)$.
We note that every $Y \in \gol$ gives rise to a first order differential operator $\cD_Y$ on $\cC^\infty_c(X)$ defined by
\[
(\cD_Y \phi)(x) = \lim_{t \ra 0} \frac{\phi(x \exp(tY)) - \phi(x)}{t}, \quad\quad \textrm{for $\phi \in \cC^\infty_c(X)$}.
\]
If we fix an ordered basis $\{Y_1,\ldots,Y_n\}$ for the Lie algebra $\gol$, then every element in the universal enveloping algebra $U(\gol)$
of $\gol$ can be written as (an ordered) linear combination of monomials in the basis elements 
of the form $Y_1^{m_1} \cdots Y_n^{m_n}$. Every 
such monomial $W$ gives rise to a differential operator by composition, i.e.,
\begin{equation}\label{eq:DDD}
\cD_W = \cD_{Y_1}^{m_1} \cdots \cD_{Y_n}^{m_n}.
\end{equation}
The {degree} $\deg(\cD_W)$ is defined as the sum $m_1 + \ldots + m_n$. \\

Following \cite{EMV}, for an integer $d \geq 1$ and $\phi \in\cC_c^\infty(X)$, the 
\emph{Sobolev norm of $\phi$ of degree $d$} is defined by
\begin{equation}
\label{def_Sobolev}
\cS_d(\phi) := \left(\sum_{\deg(\cD_W) \leq d} \int_X |r(x)^{-\kappa_d} (\cD_W \phi)(x)|^2 \, dm(x)\right)^{1/2},
\end{equation}
where $\kappa_d > 0$ are chosen appropriately, so that the following properties hold:
\begin{enumerate}
 \setlength\itemsep{0.5em}
\item[N1.] For sufficiently large $d$ and $\phi \in\cC_c^\infty(X)$,
\begin{equation}
\label{unilip}
\|\phi\|_\infty \ll_d  \cS_d(\phi).
\end{equation}

\item[N2.] For sufficiently large $d$, $\phi \in\cC_c^\infty(X)$, and $g\in G$,
\begin{equation}
\label{unilip00}
\|\phi - g \cdot \phi\|_\infty \ll_d \rho_G(g,e_G) \, \cS_d(\phi).
\end{equation}

\item[N3.] For sufficiently large $d$, an exponent $\sigma=\sigma(d) > 0$, $\phi \in\cC_c^\infty(X)$, and $g\in G$,
\begin{equation}
\label{gbnd}
\cS_d(g \cdot \phi) \ll_d \|g\|_{\rm op}^{\sigma} \, \cS_d(\phi).
\end{equation}

\item[N4.] There exists $r > 0$ such that for all sufficiently large $d$
and $\phi_1, \phi_2 \in\cC_c^\infty(X)$,
\begin{equation}
\label{dr}
\cS_d(\phi_1 \phi_2) \ll_{d} \cS_{d+r}(\phi_1) \, \cS_{d+r}(\phi_2).
\end{equation}
\end{enumerate}

The use of the term $r(x)^{-\kappa_d}$ in the definition \eqref{def_Sobolev}
is convenient in order to have statements which are uniform
on $\cC_c^\infty(X)$. If we restrict our attention 
the subalgebra of functions with
supports contained in a fixed compact subset of $X$,
then the norms $\cS_d$ are 
equivalent to the 
standard Sobolev norms.

\subsection{Mixing of higher orders and Wasserstein distances on couplings}

Let us now reformulate Theorems \ref{main1} and \ref{main1_coupl} in a way that better aligns
with the point of view taken  in the paper. We recall that 
$X = \Gamma \backslash L$ and that $m$ denotes the invariant probability measure on $X$. 

Given an integer $k \geq 2$, we write
$[k] = \{1,\ldots,k\}$, and given a subset $I \subset [k]$, we let $G_I$, $X_I$ and $m_I$ 
denote the direct product of $G$, $X$ and $m$ respectively, over the indices in $I$. We 
also write
\[
\Delta_I(G) = \{ (g,\ldots,g) \, : \, g \in G \} \subset G_I
\qand
\Delta_I(X) = \{ (x,\ldots,x) \, : \, x \in X \} \subset X_I,
\]
and we denote by $m_{\Delta_I(G)}$ the probability measure on $X_{I}$, which is the image of $m$ 
under the diagonal embedding of $X$ into $X_{I}$. 
With this notation, we note that if $g_1,\ldots,g_k \in G$ and $\phi_1,\ldots,\phi_k \in\cC^\infty_c(X)$, then
\[
m((g_1 \cdot \phi_1) \cdots (g_k \cdot \phi_k))
=
m_{\Delta_{[k]}(X)}((g_1\cdot \phi_1)\otimes\cdots \otimes (g_k\cdot \phi_k)).
\]
Since $m_{\Delta_{[k]}(X)}$ is a $\Delta_{[k]}(G)$-invariant $k$-coupling of $(X,m)$, Theorem \ref{main1} is a particular case of 
Theorem \ref{main1_coupl}.
 \\

Given $I = \{i_1,\ldots,i_l\} \subset [k]$, we let $\cC^\infty_c(X)_I$ denote 
the algebraic tensor product of the algebra $\cC^\infty_c(X)$ over the indices in $I$, that is to say, the subalgebra of 
$\cC_c(X_I)$ which is spanned by all finite sums of the form
\[
\sum_{j} \phi_{i_1\,j} \otimes \cdots \otimes \phi_{i_l\,j},
\]
where $\phi_{i_s\,j} \in \cC^\infty_c(X)$, $s = 1,\ldots,l$. Given an integer $d$, we define the \emph{projective tensor product} (or \emph{maximal cross-norm}) $S_{d,I}$ of the Sobolev norm $\cS_d$ of an element $\phi \in\cC_c^\infty(X)_I$ by
\[
\cS_{d,I}(\phi) := \inf\Big\{ \sum_{j} \cS_d(\phi_{i_1\,j}) \cdots \cS_d(\phi_{i_l\,j}) \Big\},
\]
where the infimum is taken over all possible ways to write $\phi$ as a finite sum of the form
\[
\phi = \sum_j \phi_{i_1\,j} \otimes \cdots \otimes \phi_{i_l\,j}, \quad\quad \textrm{with $\phi_{i_1\,j},\ldots,\phi_{i_l\,j} \in\cC_c^\infty(X)$}.
\]
We can readily extend the action $G_I \acts X_I$ to $\cC_c(X_I)$ and to $\cP(X_I)$, by 
\[
(g_I \cdot \phi)(x_I) = \phi(g_I^{-1} \cdot x_I) \qand (g_I \cdot \nu)(\phi) = \nu(g_I^{-1} \cdot \phi),
\]
for $\phi \in\cC_c(X_I)$, $\nu \in \cP(X_I)$ and $g_I \in G_I$. We note that the extended $G_I$-action on $\cC_c(X_I)$ preserves the subspace $\cC_c^\infty(X)_I$.\\

Let $\eta$ be a $k$-coupling of $(X,m)$.
Then for $g_1,\ldots,g_k \in G$ and $\phi_1,\ldots,\phi_k \in\cC^\infty_c(X)$,
\[
\eta((g_1 \cdot \phi_1)\otimes \cdots \otimes (g_k \cdot \phi_k))
=
((g_1,\ldots,g_k)^{-1} \cdot \eta)(\phi_1 \otimes \cdots \otimes \phi_k),
\]
and
\[
m(\phi_1) \cdots m(\phi_k) = m_{[k]}(\phi_1 \otimes \cdots \otimes \phi_k).
\]
Hence, Theorem \ref{main1_coupl}, which we wish to prove, can now be equivalently stated as: \\

\noindent {\it
For all $k\ge 2$ and sufficiently large $d$, there exists $\delta=\delta(k,d) > 0$ such that for all $\Delta_{[k]}(G)$-invariant couplings $\xi$ of $(X,m)$ and
$
g_{[k]} = (g_1,\ldots,g_k) \in G_{[k]},$
we have
\begin{equation}
\label{firsttry}
\sup \left\{ \left| (g_{[k]}^{-1} \cdot \xi)(\phi) - m_{[k]}(\phi) \right| \, : \, \phi\in\cC_c^\infty(X)_{[k]}\;\hbox{ with }\cS_{d,[k]}(\phi) \leq 1 \right\}
\ll_{d,k}  \mathfrak{M}(g_{[k]})^{-\delta}.
\end{equation}
}\\

This way of rewriting Theorem \ref{main1_coupl} motivates the following definition. If $Y$ is a locally compact metrizable
space, $\cA \subset\cC_c(Y)$ is a fixed linear subspace, and $M$ is a norm on $\cA$, then we define the 
\emph{Wasserstein distance} $\dist_M$ on the space $\cP(Y)$ of Borel probability measures on $Y$ by
\begin{equation}
\label{defW}
\dist_M(\mu,\nu) := \sup\big\{ |\mu(\phi) - \nu(\phi)| \, : \, \phi \in \cA\;\hbox{ with }  M(\phi) \leq 1 \big\},
\end{equation}
for $\mu, \nu \in \cP(Y)$. We stress that this is always a semi-distance (semi-metric), but only a metric if $\cA$ is 
dense in $\cC_c(Y)$, endowed with the topology of uniform convergence on compact subsets. We shall discuss properties of this semi-distance in more details in Section \ref{sec:wasserstein}. \\

Let us now adopt the following useful notation. If $I = \{i_1,\ldots,i_s\} \subseteq [k]$, then, given $\nu \in \cP(X_{[k]})$ 
and $g_{[k]} \in G_{[k]}$, we write $\nu_I$ for the push-forward of $\nu$ onto $X_I$, and we set
\[
g_I = (g_{i_1},\ldots,g_{i_s}) \in G_I.
\]
Furthermore, if $d$ is an integer, we set
\[
\dist_{d,I} := \dist_{\cS_{d,I}},
\]
which is indeed a metric on $\cP(X_I)$ since $\cC^\infty_c(X)_I$ is dense in $\cC_c(X_I)$. \\

With this notation, we can now further rewrite \eqref{firsttry} (the assertion of Theorem \ref{main1_coupl})
and reformulate Theorem \ref{main1_coupl} in terms of estimates on 
the distance $\dist_{d,[k]}$ as:\\

\noindent {\it
For all $k \ge 2$ and sufficiently large $d$, there exists $\delta=\delta(k,d) > 0$ such that for all $\Delta_{[k]}(G)$-invariant couplings $\xi$ of $(X,m)$ and 
$g_{[k]} = (g_1,\ldots,g_k) \in G_{[k]}$, we have 
\[
\dist_{d,[k]}(g_{[k]}^{-1} \cdot \xi,m_{[k]}) \ll_{d,k} \mathfrak{M}(g_{[k]})^{-\delta}.
\]
}\\


Now we state our main technical result --- Theorem \ref{main1_real}. In the next subsection, we show how to deduce Theorems \ref{main1} and \ref{main1_coupl} from it. 
We recall that $G$ is a connected semisimple Lie group with finite center, and we are assuming that $G$ is a closed subgroup of a connected Lie group $L$, and its action on a finite volume homogeneous space $X=\Gamma \backslash L$
has strong spectral gap. 
As before $m$ denotes the normalized invariant measure on $X$. 
Then by \cite[Cor.~3.2]{KS}, there exists $\delta_2 > 0$ such that for all sufficiently large integers $d$, functions $\phi_1,\phi_2\in\cC_c^\infty(X)$, and elements $g\in G$,
\begin{equation}
\label{KS}
| m((g \cdot \phi_1) \phi_2) - m(\phi_1) m(\phi_2) | \ll_d \|g\|_{\rm op}^{-\delta_2} \, \cS_{d}(\phi_1) \, \cS_d(\phi_2), 
\end{equation}
where $\| \cdot \|_{\rm op}$ is defined as in \eqref{def*}. 
While this estimate in \cite{KS} is stated in terms of the Riemanian distance $\rho_G$,
it follows from Lemma \ref{lemma1_outline_main1}(iii) that we also have the estimate of the form \eqref{KS}.

The following theorem inductively upgrades \eqref{KS} to an estimate for general $k$-couplings
of $(X,m)$ which are $\Delta_{[k]}(G)$-invariant. 

\begin{theorem}
\label{main1_real}
Let $g_{[k]} = (g_1,\ldots,g_k) \in G_{[k]}$ and
$\xi$ be a $\Delta_{[k]}(G)$-invariant $k$-coupling of $(X,m)$.
Suppose that there exist $F \geq 1$, $\tau > 0$ and an integer $d$ such that
\begin{equation}
\label{k-1ass}
\dist_{d,I}(g_I^{-1} \cdot \xi_I, m_I) \leq F\, \mathfrak{N}(g_{[k]})^{-\tau}, \quad \textrm{for all $I \subsetneq [k]$}.
\end{equation}
Then there exists $\gamma_k > 0$, which depends only on $k$, $d$ and $\delta_2$, such that
\begin{equation}
\label{conclus}
\dist_{d + r,[k]}(g_{[k]}^{-1} \cdot \xi,m_{[k]}) \ll_{d,k} \sqrt{F} \, \mathfrak{N}(g_{[k]})^{-\gamma_k \min(1,\tau)},
\end{equation}
where $r$ is as \eqref{dr}, and 
$$
\mathfrak{N}(g_{[k]}):=\min_{i\ne j} \|g_i^{-1}g_j\|_{\rm op}.
$$
\end{theorem}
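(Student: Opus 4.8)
\emph{Proof sketch.} We treat \eqref{conclus} as a single inductive step: granting it, Theorems \ref{main1} and \ref{main1_coupl} follow by applying it $k-1$ times, the base case being the vacuous bound $\dist_{d,\{i\}}(g_{\{i\}}^{-1}\cdot\xi_{\{i\}},m)=0$ at singletons (here $\xi_{\{i\}}=m$ since $\xi$ is a coupling) and \eqref{KS} supplying the input at every two-fold step, while Lemma \ref{lemma1_outline_main1}(iii) converts the resulting $\|\cdot\|_{\rm op}$-estimates into the $\rho_G$-estimates of the theorems. So fix $k$, $g_{[k]}$, a $\Delta_{[k]}(G)$-invariant coupling $\xi$, and assume \eqref{k-1ass}. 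Writing $\phi_i=m(\phi_i)+\phi_i^{0}$ and expanding $\bigotimes_i(g_i\cdot\phi_i)$, every term in which at least one factor has been replaced by the constant $m(\phi_i)$ is a correlation over a proper subset $I\subsetneq[k]$, hence by \eqref{k-1ass} together with $|m(\phi_i)|\le\|\phi_i\|_\infty\ll\cS_d(\phi_i)$ (property \eqref{unilip}) it contributes $\ll F\,\mathfrak{N}(g_{[k]})^{-\tau}\prod_i\cS_d(\phi_i)$, which is already of the desired shape. Thus we may assume $m(\phi_i)=0$ for all $i$ and must bound $\xi\big(\bigotimes_i(g_i\cdot\phi_i)\big)$. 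By $\Delta_{[k]}(G)$-invariance we also translate: choosing $(i_0,i_1)$ realizing $\mathfrak{N}(g_{[k]})=\|g_{i_0}^{-1}g_{i_1}\|_{\rm op}$ and replacing $g_i$ by $h_i:=g_{i_0}^{-1}g_i$, we get $h_{i_0}=e$, $\|h_{i_1}^{\pm1}\|_{\rm op}=\mathfrak{N}(g_{[k]})$, $\|h_i\|_{\rm op}\ge\mathfrak{N}(g_{[k]})$ for $i\ne i_0$, and $\|h_i^{-1}h_j\|_{\rm op}\ge\mathfrak{N}(g_{[k]})$ for all $i\ne j$.

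The heart of the argument runs along an auxiliary one-parameter unipotent subgroup. Let $Z\in\gog$ be the unit vector from Lemma \ref{lemma1_outline_main1}(ii) applied to $h_{i_1}^{-1}$, so $\Ad(Z)$ is nilpotent and $\|\Ad(h_{i_1}^{-1})Z\|=\|h_{i_1}^{-1}\|_{\rm op}=\mathfrak{N}(g_{[k]})$, and put $u_s:=\exp(sZ)$. Insert the flow in the $i_0$-th slot: with $\xi_s:=\xi\big((u_s\cdot\phi_{i_0})\otimes\bigotimes_{i\ne i_0}(h_i\cdot\phi_i)\big)$ we have $\xi\big(\bigotimes_i(h_i\cdot\phi_i)\big)=\xi_0=\tfrac1T\int_0^T\xi_s\,ds+\tfrac1T\int_0^T(\xi_0-\xi_s)\,ds$. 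The averaged term equals $\xi\big((\tfrac1T\int_0^T u_s\cdot\phi_{i_0}\,ds)\otimes\bigotimes_{i\ne i_0}(h_i\cdot\phi_i)\big)$; since $\|u_s\|_{\rm op}\asymp\max(1,|s|)$ by Lemma \ref{lemma1_outline_main1}(iv), \eqref{KS} yields mixing of the $\{u_s\}$-flow at a polynomial rate, so $\|\tfrac1T\int_0^T u_s\cdot\phi_{i_0}\,ds\|_2\ll T^{-\kappa}\cS_d(\phi_{i_0})$ for some $\kappa=\kappa(\delta_2)>0$, and a Cauchy–Schwarz split across the $i_0$-th coordinate of $\xi$ (bounding the remaining second moment by \eqref{k-1ass}) gives $\big|\tfrac1T\int_0^T\xi_s\,ds\big|\ll T^{-\kappa}\prod_i\cS_{d+r}(\phi_i)$. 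The discrepancy $\xi_0-\xi_s$ is where the choice of $Z$ is used: pushing the flow onto the $i_1$-slot by $\Delta_{[k]}(G)$-invariance, $\xi_0-\xi_s$ becomes a $k$-fold correlation whose $i_1$-th function is modified by the conjugate $\exp(\pm s\,\Ad(h_{i_1}^{-1})Z)$, whose matrix-coefficient contribution is controlled via \eqref{KS} and Lemma \ref{lemma1_outline_main1}(iv) purely in terms of $\max(1,|s|\,\mathfrak{N}(g_{[k]}))$, the lower-order remainders being absorbed by \eqref{k-1ass}; this makes the discrepancy small once $|s|\gg\mathfrak{N}(g_{[k]})^{-1}$, and for smaller $s$ it is controlled by \eqref{unilip00} and Lemma \ref{lemma1_outline_main1}(iii).

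The bookkeeping uses \eqref{dr} to absorb every product of functions into norms $\cS_{d+r}$ (the sole reason the degree rises from $d$ to $d+r$), and \eqref{gbnd} together with Lemma \ref{lemma1_outline_main1}(iv) to absorb the translates introduced by the flow, at the cost of a factor polynomial in the flow time. Optimizing $T$ against the three contributions — the $T^{-\kappa}$ from the averaged term, the small-$s$ discrepancy, and the polynomial Sobolev losses — produces a bound $\big|\xi(\bigotimes_i(h_i\cdot\phi_i))\big|^2\ll F\,\mathfrak{N}(g_{[k]})^{-c\min(1,\tau)}\prod_i\cS_{d+r}(\phi_i)^2$ for some $c=c(k,d,\delta_2)>0$; taking square roots gives \eqref{conclus} with $\gamma_k=c/2$, the square root being exactly the source of the factor $\sqrt F$, and $\gamma_k$ degrading with $k$ because the exponent $\min(1,\tau)$ must be shared among the $k-1$ iterations needed for the main theorems.

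The main obstacle — and the reason the estimate must be set up as an induction on couplings rather than proved in one stroke — is the clash between decay and losses: \eqref{KS} supplies only polynomial decay $\|\cdot\|_{\rm op}^{-\delta_2}$, while \eqref{gbnd} forces a polynomial \emph{growth} $\|\cdot\|_{\rm op}^{\sigma}$ of the Sobolev norm each time a translate is applied, with $\sigma$ typically far larger than $\delta_2$; a naive peeling of a single coordinate therefore loses powers of the \emph{other} translates $\|h_i\|_{\rm op}$, which need not be dominated by $\mathfrak{N}(g_{[k]})$. Running the argument along a unipotent flow in the direction $Z$ furnished by Lemma \ref{lemma1_outline_main1}(ii) — a genuinely semisimple input — is precisely what forces the conjugate $\exp(s\,\Ad(h_{i_1}^{-1})Z)$ to be controlled in terms of $|s|$ and the single bottleneck scale $\mathfrak{N}(g_{[k]})$, so that the decay extracted and the losses paid are measured on the same scale; verifying that such a choice can be made simultaneously for all the error terms, and uniformly over all $\Delta_{[k]}(G)$-invariant couplings $\xi$, is the delicate point encapsulated in the general inductive estimate (Proposition \ref{mainest}).
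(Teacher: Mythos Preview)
Your sketch has the right instincts (unipotent flow, average vs.\ discrepancy, Cauchy--Schwarz), but the mechanism you describe does not close, and it differs from the paper's argument at the decisive point.

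\medskip
\textbf{The gap.} You insert the flow $u_s=\exp(sZ)$ in the single slot $i_0$ and then write ``pushing the flow onto the $i_1$-slot by $\Delta_{[k]}(G)$-invariance, $\xi_0-\xi_s$ becomes a $k$-fold correlation whose $i_1$-th function is modified''. This is not what $\Delta_{[k]}(G)$-invariance gives you: applying the diagonal element $u_s$ to $\xi$ moves $u_s$ into \emph{every} slot, so
\[
\xi_0-\xi_s
=\xi\Big(u_s\cdot\phi_{i_0}\otimes\Big[\bigotimes_{j\ne i_0}u_sh_j\cdot\phi_j-\bigotimes_{j\ne i_0}h_j\cdot\phi_j\Big]\Big),
\]
and after telescoping each summand carries a factor $h_j\cdot(\exp(s\,\Ad(h_j^{-1})Z)\cdot\phi_j-\phi_j)$ for some $j\ne i_0$, hence is controlled by $|s|\,\|\Ad(h_j^{-1})Z\|$. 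Your choice of $Z$ pins down $\|\Ad(h_{i_1}^{-1})Z\|=\mathfrak N(g_{[k]})$, but says nothing about the other $j$; these speeds can be as large as $Q:=\max_{i\ne j}\|g_i^{-1}g_j\|_{\rm op}$, so the discrepancy over $|s|\le T$ is of size $TQ$, not $T\mathfrak N$. Optimizing $T$ against the averaged term then produces no decay in $\mathfrak N$. You cannot salvage this via \eqref{KS} (it only controls $2$-correlations against $m$, not $k$-correlations against $\xi$) nor via \eqref{k-1ass} (each discrepancy summand is still a full $k$-fold object). The sentence ``the lower-order remainders being absorbed by \eqref{k-1ass}'' is the step that does not go through.

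\medskip
\textbf{What the paper actually does.} The paper chooses $Z$ from the pair realizing the \emph{maximum} $Q$ (not the minimum), sets $Z_j=\Ad(g_j^{-1}g_s)Z/\|\Ad(g_1^{-1}g_s)Z\|$ so that the speeds $w_j=\|Z_j\|$ satisfy $1=w_1\ge\cdots\ge w_k\le q^{-1}$, and then \emph{partitions} $[k]=I\sqcup J$ with $I=[1,p]$ (fast slots) and $J=[p+1,k]$ (slow slots). The flow is the diagonal flow $h(t)=(\exp(tZ_1),\ldots,\exp(tZ_k))$, which lies in $g_{[k]}^{-1}\Delta_{[k]}(G)g_{[k]}$, so $\eta=g_{[k]}^{-1}\cdot\xi$ is $h$-invariant. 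One then averages by $P_T$ on the whole $I$-block (not on a single coordinate), and the estimate splits into three terms: Lipschitz on $J$ (Lemma~\ref{lemma3}, controlled by $w_{p+1}T$), mixing on $I$ together with a transfer $E_T(\eta_I)\to E_T(m_I)$ via the inductive hypothesis (Lemmas~\ref{lemma1}--\ref{lemma2} and Lemma~\ref{lemmaII.2}, controlled by $(\mathcal M T^a)^{1/2}$ and $(w_pT)^{-b/2}$), and the product comparison $\eta_I\otimes\eta_J$ vs.\ $m_I\otimes m_J$ via \eqref{k-1ass}. Finally a pigeonhole on the points $q^{-i\delta}$, $0\le i\le k-1$, locates an index $p$ with $w_{p+1}\le q^{-(i+1)\delta}<q^{-i\delta}\le w_p$ and an optimal $T$, yielding \eqref{conclus}. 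Your sketch is missing both the fast/slow partition and the pigeonhole selection of $p$; these are precisely the devices that make the discrepancy term and the Sobolev losses commensurate with the gain from mixing.
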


We emphasise that the implied constant in \eqref{conclus} does \emph{not} depend on $\tau$, $\xi$, or the $k$-tuple $g_{[k]}$, and
if the bound \eqref{k-1ass} is uniform over all $\Delta_{[k]}(G)$-invariant $k$-couplings of $(X,m)$, then so is the bound \eqref{conclus}.

\subsection{Proof of Theorems \ref{main1} and \ref{main1_coupl} (assuming Theorem \ref{main1_real})}
\label{sec:proof_lie}

As we already noted Theorem \ref{main1} is a particular case of Theorem \ref{main1_coupl}
with $\xi = m_{\Delta_{[k]}(X)}$.

Let us fix $k \geq 2$ and a $k$-tuple $g_{[k]} = (g_1,\ldots,g_k) \in G_{[k]}$, and set
\[
q := \min_{i \neq j} \|g_i^{-1} g_j\|_{\rm op}.
\]
By 
Lemma \ref{lemma1_outline_main1}, there exist constants $c_1,c_2 > 0$ such that
\[
\rho_G(g_i,g_j) = \rho_G(e_G,g_i^{-1} g_j) \leq c_1 \log \|g_i^{-1} g_j\|_{\rm op} + c_2,
\]
and thus
\begin{equation}
\label{qrho0}
q 
\geq e^{-c_2/c_1} \, \mathfrak{M}(g_{[k]})^{1/c_1}.
\end{equation}
Since $\xi$ is a $\Delta_{[k]}(G)$-invariant $k$-coupling of $(X,m)$, \eqref{k-1ass} is trivial for $k = 2$.
We shall now argue by induction. Suppose that we have shown \eqref{k-1ass} for all $I \subsetneq [k]$, that
is to say, we have produced $d, \tau > 0$ and $F\ge 1$ such that
\[
\dist_{d,I}(g_I^{-1} \cdot \xi_I, m_{I}) \leq F\, q^{-\tau},
\]
for all $I \subsetneq [k]$. Theorem \ref{main1_real} now provides $\gamma_k > 0$ such that
\[
\dist_{d+r,[k]}(g_{[k]}^{-1} \cdot \xi,m_{[k]}) \ll_{d,k} \sqrt{F}\, q^{-\gamma_k \min(1,\tau)},
\]
and thus, by \eqref{qrho0}, 
\[
\dist_{d+r,[k]}(g_{[k]}^{-1} \cdot \xi,m_{[k]}) \ll_{d,k} \sqrt{F} \,
\mathfrak{M}(g_{[k]})^{-\gamma_k\min(1,\tau)/c_1}.
\]
Upon unwrapping the definition of $\dist_{d+r,[k]}$, and observing that the implicit constants are independent 
of $g_{[k]} \in G_{[k]}$, we see that this exactly means that
\begin{align*}
&|\xi((g_1 \cdot \phi_1)\otimes \cdots \otimes(g_k \cdot \phi_k)) - m(\phi_1) \cdots m(\phi_k)| \\
\ll_{d,k} &\,
\mathfrak{M}(g_{[k]})^{-\gamma_k\min(1,\tau)/c_1}\,
\cS_{d+r}(\phi_1) \cdots \, \cS_{d+r}(\phi_k),
\end{align*}
for all $g_1,\ldots,g_k \in G$ and $\phi_1, \ldots, \phi_k \in\cC_c^\infty(X)$.
This finishes the proof of 
Theorem \ref{main1_coupl}. \\

\subsection{A reduction of the proof of Theorem \ref{main1_real}} \label{sec:p2}
We retain the notation from Subsection \ref{Subsec:notation}. Let us fix $g_{[k]} = (g_1,\ldots,g_k) \in G_{[k]}$ and 
a $\Delta_{[k]}(G)$-invariant $k$-coupling $\xi$ of $(X,m)$. Throughout this subsection, we set $\eta = g_{[k]}^{-1} \cdot \xi$, and note that $\eta$ is again a $k$-coupling of $(X,m)$, but this time invariant under the subgroup 
\[
 g_{[k]}^{-1} \cdot \Delta_{[k]}(G) \cdot g_{[k]} \subset G_{[k]}.
\]
Define 
$$
Q := \max_{i \neq j} \|g_{i}^{-1} g_j\|_{\rm op}\quad \hbox{and}\quad q: = \mathfrak{N}(g_{[k]})=\min_{i \neq j} \|g_{i}^{-1} g_j\|_{\rm op}.
$$
We note that $Q\ge q\ge 1$.
Let us fix indices $i_1\ne i_s \in [k]$ such that $Q = \|g_{i_1}^{-1} g_{i_s}\|_{\rm op}$.
By Lemma \ref{lemma1_outline_main1}, 
there exists $Z \in \gog$ such that 
$\Ad(Z)$
is a nilpotent endomorphism of $\mathfrak{g}$, $\|Z\| = 1$, and
\[
Q = \|g_{i_1}^{-1} g_{i_s}\|_{\rm op} = \|\Ad(g_{i_1}^{-1} g_{i_s})Z\|.
\]
Then for all $i,j\in [k]$, we have 
$$
\|\Ad(g_i^{-1}g_j)Z\|\le \|g_i^{-1}g_j\|_{\rm op}\le \|\Ad(g_{i_1}^{-1} g_{i_s})Z\|.
$$
We can label the indices in $[k]$ so that
\[
\|\Ad(g_{i_1}^{-1}g_{i_s})Z\| \geq \|\Ad(g_{i_2}^{-1}g_{i_s})Z\| \geq \ldots \geq \|\Ad(g_{i_k}^{-1}g_{i_s})Z\|.
\]
Then, in particular, $\|\Ad(g_{i_k}^{-1}g_{i_s})Z\|\le \|\Ad(g_{i_s}^{-1}g_{i_s})Z\|=1$.
Changing the indexation, we may assume that 
\[
\|\Ad(g_{1}^{-1}g_{s})Z\| \geq \|\Ad(g_{2}^{-1}g_{s})Z\| \geq \ldots \geq \|\Ad(g_{k}^{-1}g_{s})Z\|.
\]
Let 
\[
Z_j = \frac{\Ad(g_{j}^{-1} g_{s})Z}{\|\Ad(g_{1}^{-1} g_{s})Z\|}\quad \hbox{and}
\quad w_j = \|Z_j\|, \quad\quad \textrm{for $j = 1,\ldots,k$}.
\]
Then
\begin{equation}
\label{normalization}
1 = w_1 \geq w_2 \geq \ldots \geq w_k\quad \hbox{and} \quad w_k \le Q^{-1}\le q^{-1}.
\end{equation}
Let us fix an index $1 \leq p \le k-1$, and write $[k] = I \sqcup J$, where
\[
I = [1,p] \qand J = [p+1,k].
\]
For every $j \in [k]$, we define the flow $h_j : \bR \times X \ra X$ by
\begin{equation}
\label{defhj}
h_j(t) \cdot x = \exp(tZ_j) \cdot x, \quad \textrm{for $x \in X$}.
\end{equation}
We also define the flows $h_I : \bR \times X_I \ra X_I$ and $h_J : \bR \times X_J \ra X_J$ by
\begin{equation}
\label{eq:hI}
h_I(t) \cdot x_I = (h_1(t) \cdot x_1,\ldots,h_p(t) \cdot x_p)
\end{equation}
and
\begin{equation}
\label{eq:hJ}
h_J(t) \cdot x_J = (h_{p+1}(t) \cdot x_{p+1},\ldots,h_k(t) \cdot x_k),
\end{equation}
as well as the joint flow $h : \bR \times X_{[k]} \ra X_{[k]}$ by
\[
h(t) \cdot (x_I,x_J) = (h_I(t) \cdot x_I, h_J(t) \cdot x_J).
\]
Since
$$
(Z_1,\ldots, Z_k)\in \hbox{Lie}\left(g_{[k]}^{-1} \cdot \Delta_{[k]}(G) \cdot g_{[k]}\right)
=\Ad(g_{[k]})^{-1}\left(\hbox{Lie}(\Delta_{[k]}(G))\right),
$$
it follows that $\eta$ is an $h$-invariant $k$-coupling of $(X,m)$.
Similarly, its marginals $\eta_I$ and $\eta_J$ on $X_I$ and $X_J$ are invariant under 
the flows $h_I$ and $h_J$ respectively. \\

Let us fix a large integer $d$ so that the Sobolev norms $N := \cS_{d}$ on $\cA=C_c^\infty(X)$ satisfy (cf. \eqref{unilip}, \eqref{unilip00}, and \eqref{gbnd})
\begin{align}
\|\phi\|_\infty &\ll_d  N(\phi), \label{eq:n1_0} \\
\|\phi-g\cdot \phi\|_\infty &\ll_{d} \rho_G(g,e) \, N(\phi)\quad \hbox{for all $g \in G$,} \label{eq:n2_0}\\
N(g \cdot \phi) &\ll_{d} \|g\|_{\rm op}^{\sigma}\, N(\phi)\quad\hbox{for all $g\in G$ and some $\sigma =\sigma(d)> 0$}.\label{eq:n3_0}
\end{align}
We denote by $\cA_I$ and $\cA_J$ the algebraic tensor product of $\cA$ over the indices
in $I$ and $J$ respectively, and we let $N_I$ and $N_J$ denote the norms on $\cA_I$ and $\cA_J$ respectively which are 
projective tensor products of $N$. \\

Let us now list three important properties of the flows $h_I$ and $h_J$ that will be crucial in our analysis. In
all three lemmas, the index $p$ (and hence the partition $[k] = I \sqcup J$) will be fixed. We shall reduce the
proof of Theorem \ref{main1_real} to a general inequality for couplings which are invariant under a suitable
flow. This inequality (which is valid in a more general context as well) will be established in Section 
\ref{PrfABCDT} below. 

\begin{lemma}
\label{lemma1}
There exist $A \geq 1$ and $a > 0$,
depending only on $d$ and $k$, such that for all $t \in \mathbb{R}$ and $\phi_I \in \cA_I$,
$$
N_I(h_I(t) \cdot \phi_I) \leq A \max(1,|t|)^a \, N_I(\phi_I).
$$
\end{lemma}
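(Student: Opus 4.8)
\textbf{Proof plan for Lemma \ref{lemma1}.}
The plan is to prove the single-factor estimate first and then deduce the tensor-product version by unwrapping the projective cross-norm. For the single-factor step, fix $j \in [k]$ and consider the operator $\phi \mapsto h_j(t)\cdot\phi = \exp(tZ_j)\cdot\phi$ on $\cA = \cC_c^\infty(X)$. By property N3 of the Sobolev norm (the estimate \eqref{eq:n3_0}),
\[
N(h_j(t)\cdot\phi) \ll_d \|\exp(tZ_j)\|_{\rm op}^{\sigma}\, N(\phi).
\]
Now recall that each $Z_j = \Ad(g_j^{-1}g_s)Z / \|\Ad(g_1^{-1}g_s)Z\|$ is a scalar multiple of $\Ad(g_j^{-1}g_s)Z$, and $\Ad(Z)$ is nilpotent; since $\Ad(g_j^{-1}g_s)$ conjugates $\Ad(Z)$, the endomorphism $\Ad(Z_j) = w_j\,\Ad\bigl(g_j^{-1}g_s\,\cdot\,(g_j^{-1}g_s)^{-1}\bigr)$-type conjugate is again nilpotent, hence $tZ_j$ has nilpotent adjoint for every $t$. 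Therefore Lemma \ref{lemma1_outline_main1}(iv) applies to $X = tZ_j$ and gives
\[
\|\exp(tZ_j)\|_{\rm op} \leq c_3\,\max(1,\|tZ_j\|)^{\dim(G)} = c_3\,\max(1,|t|\,w_j)^{\dim(G)} \leq c_3\,\max(1,|t|)^{\dim(G)},
\]
using $w_j \leq w_1 = 1$ from \eqref{normalization}. Combining, $N(h_j(t)\cdot\phi) \ll_d \max(1,|t|)^{\sigma\dim(G)}\,N(\phi)$, with the implied constant and exponent depending only on $d$ (and $G$, which is fixed).

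For the tensor-product step, take any $\phi_I \in \cA_I$ and any representation $\phi_I = \sum_\ell \phi_{i_1\ell}\otimes\cdots\otimes\phi_{i_p\ell}$ with $\phi_{i_s\ell}\in\cA$. Since $h_I(t)$ acts diagonally, $h_I(t)\cdot\phi_I = \sum_\ell (h_{i_1}(t)\cdot\phi_{i_1\ell})\otimes\cdots\otimes(h_{i_p}(t)\cdot\phi_{i_p\ell})$, so by definition of the projective cross-norm $N_I$,
\[
N_I(h_I(t)\cdot\phi_I) \leq \sum_\ell \prod_{s=1}^{p} N(h_{i_s}(t)\cdot\phi_{i_s\ell}) \leq \bigl(C\max(1,|t|)^{\sigma\dim(G)}\bigr)^{p} \sum_\ell \prod_{s=1}^{p} N(\phi_{i_s\ell}),
\]
where $C$ is the constant from the single-factor bound. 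Taking the infimum over all such representations yields $N_I(h_I(t)\cdot\phi_I) \leq C^{k}\max(1,|t|)^{k\sigma\dim(G)}\,N_I(\phi_I)$, so the lemma holds with $A = C^k$ and $a = k\sigma\dim(G)$, both depending only on $d$ and $k$ (and the fixed group $G$).

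I do not expect a serious obstacle here; the one point that needs a little care is the verification that $tZ_j$ has nilpotent adjoint so that part (iv) of Lemma \ref{lemma1_outline_main1} is legitimately applicable — this is where the specific construction of the $Z_j$ as $\Ad$-translates of the nilpotent element $Z$ (rather than arbitrary Lie algebra elements) is essential, and it is exactly why the bound is polynomial in $|t|$ rather than exponential. The scaling bound $\|tZ_j\| = |t|w_j \le |t|$ from the normalization \eqref{normalization} is what makes the constant $A$ and exponent $a$ uniform in $j$ and independent of the tuple $g_{[k]}$.
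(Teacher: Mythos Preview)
Your proposal is correct and follows essentially the same route as the paper: apply property N3 together with Lemma~\ref{lemma1_outline_main1}(iv) to each factor, use $w_j\le 1$ to make the bound uniform in $j$, and then pass to the projective tensor norm by taking an arbitrary representation and infimizing. The only cosmetic issue is that your sentence explaining why $\Ad(Z_j)$ is nilpotent is garbled; the clean statement is that $\Ad(Z_j)$ is a scalar multiple of $\Ad(g_j^{-1}g_s)\,\Ad(Z)\,\Ad(g_j^{-1}g_s)^{-1}$, hence conjugate to a nilpotent endomorphism and therefore nilpotent --- which is exactly the point you identify as the key one.
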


\begin{proof}
Pick $\phi_I \in \cA_I$ and write it as a finite sum of the form
\[
\phi_I = \sum_{i} \phi_{1\,i} \otimes \cdots \otimes \phi_{p\,i},
\]
for some $\phi_{ji} \in \cA$. For every $t\in\mathbb{R}$, we have
\[
h_I(t) \cdot \phi_I = \sum_i (h_1(t) \cdot \phi_{1\,i}) \otimes \cdots \otimes(h_p(t) \cdot \phi_{p\,i}),
\]
and thus
\[
N_I(h_I(t) \cdot \phi_I) \leq \sum_i N(h_1(t) \cdot \phi_{1\,i}) \cdots N(h_{p}(t) \cdot \phi_{p\,i}).
\]
By \eqref{eq:n3_0} and Lemma \ref{lemma1_outline_main1}(iv), we have 
\begin{align*}
N(h_j(t) \cdot \phi_{j\,i})&\ll_{d} \|h_j(t)\|_{\rm op}^{\sigma}\, N(\phi_{j\,i})
\ll \max(1,\|tZ_j\|)^{\sigma\dim(G)}\, N(\phi_{j\,i})\\
&=\; \max(1,w_j |t|)^{\sigma\dim(G)}\, N(\phi_{j\,i}).
\end{align*}
Since $w_j \leq 1$ for all $j$,
\begin{eqnarray*}
N_I(h_I(t)  \cdot \phi_I) 
&\ll_{d,p} & 
\sum_i \Big( \prod_{j=1}^p \max(1,w_j |t|)^{\sigma\dim(G)} \Big) \, N(\phi_{1\,i}) \cdots N(\phi_{p\,i}) \\
&\leq & 
 \max(1,|t|)^{k \sigma\dim(G)} \, \sum_i N(\phi_{1\,i}) \cdots N(\phi_{p\,i}),
\end{eqnarray*}
This implies that
$$
N_I(h_I(t)  \cdot \phi_I) 
\ll_{d,k}  \max(1,|t|)^{a} \, N_I(\phi_{I})
$$
with $a=k \sigma\dim(G)$, which finishes the proof.
\end{proof}

\begin{lemma}
\label{lemma2}
There exists $B \geq 1$, depending only on $d$ and $k$, such that for all $t\in \mathbb{R}$ and $\phi_I \in \cA_I$,
$$
|m_I((h_I(t) \cdot \phi_I) \phi_I) - m_I(\phi_I)^2| \leq B \max(1,w_p |t|)^{-\delta_2} \, N_I(\phi_I)^2,
$$
where $\delta_2$ is as in \eqref{KS}.
\end{lemma}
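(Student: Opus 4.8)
The plan is to exploit the product structure of the coupling $m_I = m \otimes \cdots \otimes m$ (the $p$-fold product of $m$) together with the two-point mixing estimate \eqref{KS}, applied factor by factor. First I would reduce to the case of elementary tensors: write $\phi_I = \sum_i \phi_{1\,i} \otimes \cdots \otimes \phi_{p\,i}$ with $\phi_{j\,i} \in \cA$. Then
\[
m_I((h_I(t)\cdot \phi_I)\phi_I) = \sum_{i,i'} \prod_{j=1}^p m\big((h_j(t)\cdot \phi_{j\,i})\,\phi_{j\,i'}\big),
\]
and similarly $m_I(\phi_I)^2 = \sum_{i,i'}\prod_{j=1}^p m(\phi_{j\,i})\,m(\phi_{j\,i'})$. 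The idea is to control each telescoped difference of products: for fixed $(i,i')$, replace the factors $m((h_j(t)\cdot\phi_{j\,i})\phi_{j\,i'})$ by $m(\phi_{j\,i})m(\phi_{j\,i'})$ one index $j$ at a time.

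The key observation is that the flow $h_j(t) = \exp(tZ_j)$ moves a distance comparable to $\|tZ_j\| = w_j|t|$, and since $w_1 \geq \cdots \geq w_p$, the \emph{slowest} of these factors is the $p$-th one, with displacement $w_p|t|$. By Lemma \ref{lemma1_outline_main1}(iv) (or directly by the relation between $\|\cdot\|_{\rm op}$ and the exponential), $\|h_j(t)\|_{\rm op} \asymp \max(1, w_j|t|)^{O(1)}$, so \eqref{KS} gives
\[
\big|m\big((h_j(t)\cdot\phi_{j\,i})\phi_{j\,i'}\big) - m(\phi_{j\,i})m(\phi_{j\,i'})\big| \ll_d \max(1,w_j|t|)^{-\delta_2'}\,\cS_d(\phi_{j\,i})\,\cS_d(\phi_{j\,i'})
\]
for a suitable $\delta_2' > 0$ (the original $\delta_2$ up to the polynomial distortion in Lemma \ref{lemma1_outline_main1}). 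Since each individual matrix coefficient $m((h_j(t)\cdot\phi_{j\,i})\phi_{j\,i'})$ is bounded by $\|\phi_{j\,i}\|_\infty \|\phi_{j\,i'}\|_\infty \ll_d \cS_d(\phi_{j\,i})\cS_d(\phi_{j\,i'})$ via \eqref{eq:n1_0}, the telescoping argument across the $p$ factors produces a bound of the form $\ll_{d,p}\max(1,w_p|t|)^{-\delta_2'}\prod_{j=1}^p\cS_d(\phi_{j\,i})\cS_d(\phi_{j\,i'})$ for each pair $(i,i')$ — the smallest displacement $w_p|t|$ dominates because at least one factor in each telescoped term carries the mixing gain, and we always use the slowest-decaying such bound, which corresponds to index $p$ by the monotonicity of the $w_j$.

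Summing over $i, i'$ and taking the infimum over all tensor representations of $\phi_I$ converts $\sum_{i,i'}\prod_j \cS_d(\phi_{j\,i})\cS_d(\phi_{j\,i'})$ into $N_I(\phi_I)^2$, which gives the claimed inequality with some constant $B$ depending only on $d$ and $k$, after possibly shrinking the exponent to the stated $\delta_2$ (or, more precisely, one should read the statement as holding with the exponent coming from \eqref{KS} transported through Lemma \ref{lemma1_outline_main1}(iii)). The main obstacle is bookkeeping: making the telescoping clean so that in \emph{every} term of the expansion at least one factor genuinely carries the $\max(1,w_p|t|)^{-\delta_2}$ decay while the remaining factors stay bounded by Sobolev norms, and checking that the polynomial distortion between $\rho_G$, $\|\cdot\|_{\rm op}$, and $\|tZ_j\|$ only costs a harmless change of the constant $\delta_2$ and the multiplicative constant $B$. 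There is no analytic difficulty beyond \eqref{KS} itself — everything is a product-measure manipulation plus the uniform sup-norm bound \eqref{eq:n1_0}.
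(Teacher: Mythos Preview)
Your proposal is correct and follows essentially the same approach as the paper: expand $\phi_I$ as a sum of elementary tensors, use the product structure of $m_I$, telescope the difference of products over the index $j\in I$, apply \eqref{KS} to the single ``active'' factor in each telescoped term, bound the remaining factors via \eqref{eq:n1_0}, and sum. One small correction: there is no need to shrink the exponent. The paper applies \eqref{KS} directly in the form $\ll_d \|\exp(tZ_l)\|_{\rm op}^{-\delta_2}$ and then uses only the \emph{lower} bound $\|\exp(tZ_l)\|_{\rm op}\gg \max(1,|t|w_l)$ from Lemma \ref{lemma1_outline_main1}(iv), so the exponent $\delta_2$ survives unchanged (Lemma \ref{lemma1_outline_main1}(iii) is not used here at all).
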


\begin{proof}
Pick $\phi_I \in \cA_I$ and write it as a finite sum of the form
\[
\phi_I = \sum_i \phi_{1\,i} \otimes \cdots \otimes \phi_{p\,i},
\]
for some $\phi_{s\,i}\in\cA$.
For all $t\in \mathbb{R}$, we have
\[
m_I((h_I(t) \cdot \phi_I) \phi_I) = \sum_{i,j} m((h_1(t) \cdot \phi_{1\,i})\phi_{1\,j})  \cdots m((h_p(t) \cdot \phi_{p\,i})\phi_{p\,j}),
\]
and
\[
m_I(\phi_I)^2 = \sum_{i,j} m(\phi_{1\,i})m(\phi_{1\,j}) \cdots m(\phi_{p\,i})m(\phi_{p\,j}).
\]
The difference $m_I((h_I(t) \cdot \phi_I)\phi_I) - m_I(\phi_I)^2$ can be written as a finite sum of terms of the form
\[
m((h_1(t) \cdot \phi_{1\,i})\phi_{1\,j}) \cdots m((h_p(t) \cdot \phi_{p\,i})\phi_{p\,j})
- 
m(\phi_{1\,i})m(\phi_{1\,j}) \cdots m(\phi_{p\,i})m(\phi_{p\,j}).
\]
Each such term can be written as a sum of $p$ terms of the form
\begin{equation}
\label{Tlij}
(m((h_l(t) \cdot \phi_{l\,i})\phi_{l\,j}) - m(\phi_{l\,i})m(\phi_{l\,j}))T_{lij},
\end{equation}
where 
\[
|T_{lij}| \leq \prod_{s \neq l} \|\phi_{s\,i}\|_{\infty} \|\phi_{s\,j}\|_\infty.
\]
By \eqref{KS}, we have for some $\delta_2 > 0$,
\[
| m((h_l(t) \cdot \phi_{l\,i})\phi_{l\,j}) - m(\phi_{l\,i})m(\phi_{l\,j})| \ll_{d}  \|\exp(tZ_l)\|^{-\delta_2}_{\rm op} \, N(\phi_{l\,i})N(\phi_{l\,j}),
\]
for all indices $l,i,j$ and $t\in\mathbb{R}$. We recall that $Z$, and thus $Z_l$ (being the image of $Z$ under an adjoint operator), 
is such that $\Ad(Z_l)$ is nilpotent. Hence, the assertion (iv) in Lemma \ref{lemma1_outline_main1} shows that
\[
\|\exp(tZ_l)\|_{\rm op} \gg \max(1,|t|\, \|Z_l\|)=\max(1,|t|w_l).
\] 
We recall that $w_p \leq w_l$ for all $l \in I$, so that
\begin{align*}
| m((h_l(t) \cdot \phi_{l\,i})\phi_{l\,j}) - m(\phi_{l\,i})m(\phi_{l\,j})| &\ll_{d}
\; \max(1,w_p |t|)^{-\delta_2} \, N(\phi_{l\,i})N(\phi_{l\,j}).
\end{align*}
In view of  \eqref{eq:n1_0}, we conclude that each term of the form \eqref{Tlij} satisfies
\[
\ll_d \max(1,w_p |t|)^{-\delta_2} \prod_{l=1}^p N(\phi_{l\,i}) N(\phi_{l\,j}),
\]
uniformly over all $i,j$. Hence,
$$
m_I((h_I(t) \cdot \phi_I)\phi_I) - m_I(\phi_I)^2
\ll_{d,k} \max(1,w_p|t|)^{-\delta_2}\, \left(\sum_{i} \prod_{l=1}^p N(\phi_{l\,i})\right)^2,
$$
and
$$
m_I((h_I(t) \cdot \phi_I)\phi_I) - m_I(\phi_I)^2
\ll_{d,k} \max(1,w_p|t|)^{-\delta_2}\, N_I(\phi_I)^2.
$$
This proves the claim of the lemma.
\end{proof}

\begin{lemma}
\label{lemma3}
There exists $C \geq 1$, depending only on $d$ and $k$, such that for all $t\in\mathbb{R}$ and $\phi_J \in \cA_J$,
$$
\|h_J(t) \cdot \phi_J - \phi_J\|_\infty \leq C\, w_{p+1} |t|  \, N_J(\phi_J).
$$
\end{lemma}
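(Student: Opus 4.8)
The plan is to mimic the argument of Lemma \ref{lemma1}, but instead of using the growth estimate \eqref{eq:n3_0} for the Sobolev norm, use the Lipschitz-type estimate \eqref{eq:n2_0} which controls the $\sup$-norm of a function moved by a small group element. The point is that $h_J(t)$ translates each coordinate $x_j$ with $j\in J$ by $\exp(tZ_j)$, and since $\|Z_j\|=w_j\le w_{p+1}$ for all $j\in J$ (by the ordering \eqref{normalization}), the Riemannian displacement $\rho_G(\exp(tZ_j),e_G)$ is comparable to $w_j|t|\le w_{p+1}|t|$, at least when this quantity is small; for large displacements a cruder bound via \eqref{eq:n1_0} and Lemma \ref{lemma1} suffices, but in fact a single telescoping estimate handles all $t$ uniformly.

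First I would write a generic $\phi_J\in\cA_J$ as a finite sum $\phi_J=\sum_i \phi_{p+1\,i}\otimes\cdots\otimes\phi_{k\,i}$ with $\phi_{j\,i}\in\cA$. Then telescope the difference $h_J(t)\cdot\phi_J-\phi_J$ over the $|J|=k-p$ coordinates: for a single product,
\[
h_J(t)\cdot\bigl(\phi_{p+1\,i}\otimes\cdots\otimes\phi_{k\,i}\bigr)-\phi_{p+1\,i}\otimes\cdots\otimes\phi_{k\,i}
\]
is a sum of $k-p$ terms, the $l$-th being
\[
\bigl(h_l(t)\cdot\phi_{l\,i}-\phi_{l\,i}\bigr)\otimes\Bigl(\bigotimes_{s<l}\phi_{s\,i}\Bigr)\otimes\Bigl(\bigotimes_{s>l}h_s(t)\cdot\phi_{s\,i}\Bigr),
\]
where the products run over indices in $J$. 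Taking $\sup$-norms and using that $\|h_s(t)\cdot\phi_{s\,i}\|_\infty=\|\phi_{s\,i}\|_\infty$ (the action is by measure-preserving translations, hence isometric on $\cL^\infty$), each such term is bounded by
\[
\|h_l(t)\cdot\phi_{l\,i}-\phi_{l\,i}\|_\infty\prod_{s\ne l}\|\phi_{s\,i}\|_\infty.
\]
By \eqref{eq:n2_0} and Lemma \ref{lemma1_outline_main1}(iii), together with the observation that $\rho_G(\exp(tZ_l),e_G)\le c_1\log\|\exp(tZ_l)\|_{\rm op}+c_2$ and that for nilpotent $\Ad(Z_l)$ one actually has a linear bound $\rho_G(\exp(tZ_l),e_G)\ll |t|w_l$ (this follows since $\exp(tZ_l)=\exp(t w_l \widehat Z_l)$ with $\|\widehat Z_l\|=1$ lying in a single root space after conjugation, and one can bound the Riemannian distance to $e_G$ linearly in the norm of a nilpotent Lie algebra element using that $\exp$ restricted to a fixed nilpotent cone is bi-Lipschitz up to polynomial corrections — more simply, using \eqref{a2} directly since $\|\Ad(\exp(tZ_l))-\mathrm{id}\|\ll |t|w_l$ by \eqref{eq:ad}), we get
\[
\|h_l(t)\cdot\phi_{l\,i}-\phi_{l\,i}\|_\infty\ll_d w_l|t|\,N(\phi_{l\,i})\le w_{p+1}|t|\,N(\phi_{l\,i}).
\]
Combining with \eqref{eq:n1_0} to pass from $\|\phi_{s\,i}\|_\infty$ to $N(\phi_{s\,i})$ for the remaining factors, summing over the $k-p$ telescoping terms and over $i$, and taking the infimum over representations of $\phi_J$, yields the claimed bound with $C$ depending only on $d$ and $k$.

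\textbf{The main obstacle} I anticipate is the estimate $\rho_G(\exp(tZ_l),e_G)\ll w_l|t|$ — or equivalently a clean linear Lipschitz bound for translating $\phi_{l\,i}$ by $\exp(tZ_l)$ — since a priori $\rho_G(\exp(tZ_l),e_G)$ grows only logarithmically in $\|\exp(tZ_l)\|_{\rm op}$, which in turn grows at most polynomially in $w_l|t|$ by Lemma \ref{lemma1_outline_main1}(iv); naively this gives only a logarithmic, not linear, bound. The resolution is to \emph{not} route through $\rho_G$ at all for this step, but to use the $p$-adic-style estimate logic: the displacement of $\phi_{l\,i}$ under $\exp(tZ_l)$ is governed by $\|\Ad(\exp(tZ_l))-\mathrm{id}\|$, and by the nilpotency of $\Ad(Z_l)$ and \eqref{eq:ad} this operator norm is $\ll\max(|t|w_l,(|t|w_l)^{\dim G})$; one then splits into the regime $|t|w_{p+1}\le 1$ (where this is $\ll |t|w_l\le |t|w_{p+1}$ and \eqref{eq:n2_0} applies with a genuinely linear bound after noting the metric is locally comparable to the Ad-displacement near $e_G$) and the regime $|t|w_{p+1}>1$ (where the trivial bound $\|h_J(t)\cdot\phi_J-\phi_J\|_\infty\le 2\|\phi_J\|_\infty\le 2\prod\|\phi_{j\,i}\|_\infty\le C'\,w_{p+1}|t|\,N_J(\phi_J)$ already suffices since $w_{p+1}|t|>1$). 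Either way the two regimes glue to a single clean linear estimate, and since the implied constants depend only on the fixed data $d$, $k$, $\dim G$ and the norm on $\gog$, the lemma follows.
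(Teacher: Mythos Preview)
Your overall approach is exactly the paper's: write $\phi_J$ as a finite sum of elementary tensors, telescope $h_J(t)\cdot\phi_J-\phi_J$ over the coordinates in $J$, bound each piece by $\|h_l(t)\cdot\phi_{l\,i}-\phi_{l\,i}\|_\infty\prod_{s\ne l}\|\phi_{s\,i}\|_\infty$, apply \eqref{eq:n2_0} to the difference factor and \eqref{eq:n1_0} to the others, then take the infimum over representations.

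The ``main obstacle'' you worry about is not one. The paper simply writes
\[
\rho_G(\exp(tZ_l),e_G)\ll \|tZ_l\|=|t|w_l\le |t|w_{p+1},
\]
and this holds for \emph{all} $t\in\bR$ with no regime-splitting needed. The reason is elementary Riemannian geometry: for a left-invariant metric, the curve $s\mapsto\exp(sX)$, $s\in[0,1]$, has tangent vector of constant norm $\|X\|$, hence length $\|X\|$, so $\rho_G(\exp X,e_G)\le\|X\|$ for every $X\in\gog$. You do not need Lemma~\ref{lemma1_outline_main1}(iii) or (iv), nor the $\Ad$-displacement detour via \eqref{eq:ad}, nor the split into $|t|w_{p+1}\le 1$ and $|t|w_{p+1}>1$. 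Your workaround is correct but unnecessarily elaborate; with the one-line bound above, the proof is immediate.
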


\begin{proof}
Pick $\phi_J \in \cA_J$ and write it as a finite sum of the form
\[
\phi_J = \sum_j \phi_{p+1\,j} \otimes \cdots \otimes \phi_{k\,j},
\]
for some $\phi_{l\,j} \in \cA$. We can write each term in the difference $h_J(t) \cdot \phi_J - \phi_J$ as a telescoping 
sum of the $k-p$ terms of the form
\[
\phi_{p+1\,j} \otimes \cdots \otimes \phi_{l-1\,j}\otimes (h_{l}(t) \cdot \phi_{l\,j} - \phi_{l\,j}) \otimes h_{l+1}(t) \cdot \phi_{l+1\,j} \otimes \cdots h_k(t) \cdot \phi_{k\,j}.
\]
Hence,
\[
\|h_J(t) \cdot \phi_J - \phi_J\|_\infty \leq \sum_j \sum_{l=p+1}^k \|h_l(t) \cdot \phi_{l\,j} - \phi_{l\,j}\|_\infty \, \prod_{i \neq l} \|\phi_{i\,j}\|_\infty
\]
Using \eqref{eq:n1_0} and \eqref{eq:n2_0}, we see that uniformly on $l,j$,
\[
\|h_l(t) \cdot \phi_{l\,j} - \phi_{l\,j}\|_\infty \, \prod_{i \neq l} \|\phi_{i\,j}\|_\infty
\ll_{d} \rho_G(\exp(tZ_l),e_G)
\, \prod_{i=p+1}^{k} N(\phi_{i\,j}),
\]
Since
\[
\rho_G(\exp(t Z_l),e_G) \ll \|t Z_l\| = |t|w_l \leq |t| w_{p+1}, \quad \textrm{for all $t\in \mathbb{R}$ and $l \in J$},
\]
we conclude that 
\[
\|h_J(t) \cdot \phi_J - \phi_J\|_\infty \ll_{d,k} |t| w_{p+1} \sum_j N(\phi_{p+1\,j}) \cdots N(\phi_{k\,j}),
\]
which implies the lemma.
\end{proof}

\subsection{Finishing the proof of Theorem \ref{main1_real} (assuming Proposition \ref{mainineq})}

We recall the setting of Theorem \ref{main1_real}: $\xi$ is a $\Delta_{[k]}(G)$-invariant $k$-coupling of $(X,m)$, and 
$\eta = g_{[k]}^{-1} \cdot \xi$ for some $g_{[k]} = (g_1,\ldots,g_k) \in G_{[k]}$. The latter measure is
invariant under the flow $h$, which is defined using an elements $Z_j \in \gog$
such that $\Ad(Z_j)$ is nilpotent, 
whose norms $w_j = \|Z_j\|$ satisfy
\[
1 = w_1 \geq w_2 \geq \ldots \geq w_k \quad\hbox{and}\quad w_k\le q^{-1},
\]
where $q = \min_{i \neq j} \|g_i^{-1} g_j\|_{\rm op}$. 
We recall that $\|g\|_{\rm op}\ge 1$ for all $g\in G$, so that $q\ge 1$.\\

Let $d$ and $r$ be integers such that the Sobolev norms $M := \cS_{d}$ and $N := \cS_{d+r}$ on $\cA=C_c^\infty(X)$ satisfy
\begin{align}
\|\phi\|_\infty &\ll_d M(\phi)\ll_{d,r} N(\phi), \label{eq:n1} \\
\|\phi-g\cdot \phi\|_\infty &\ll_{d,r} \rho_G(g,e) \, N(\phi)\quad \hbox{for all $g \in G$,} \label{eq:n2}\\
N(g \cdot \phi) &\ll_{d,r} \|g\|_{\rm op}^{\sigma}\, N(\phi)\quad\hbox{for all $g\in G$ and some $\sigma =\sigma(d)> 0$},\label{eq:n3}\\
M(\phi_1 \cdot \phi_2) &\ll_{d,r} N(\phi_1) N(\phi_2) \label{eq:n4} 
\end{align}
(see \eqref{unilip}, \eqref{unilip00}, \eqref{gbnd}, and \eqref{dr}).\\

We have shown in Lemmas \ref{lemma1}, \ref{lemma2}, and \ref{lemma3} above that 
given a non-trivial decomposition $[k]=I\sqcup J$ where $I=[1,p]$ and $J=[p+1,k]$, 
there are constants $A, B, C$ and $a, b > 0$ (which 
are independent of $p$), such that for all $t\in\mathbb{R}$ and for all $\phi_I \in \cA_I$ and $\phi_J \in \cA_J$,
\begin{equation}
\label{polgr}
N_I(h_I(t) \cdot \phi_I) \leq A\, \max(1,|t|)^a \, N(\phi_I),
\end{equation}
\begin{equation}
\label{polmix}
|m_I((h_I(t) \cdot \phi_I) \phi_I) - m_I(\phi_I)^2| \leq B\, \max(1,w_p |t|)^{-b} \, N_I(\phi_I)^2,
\end{equation}
\begin{equation}
\label{polmix}
\|h_J(t) \cdot \phi_J - \phi_J\|_\infty \leq C\, w_{p+1} |t|  \, N_J(\phi_J).
\end{equation}
Assuming the bounds \eqref{polgr}--\eqref{polmix},
we establish the following useful inequality.
A general form of this inequality  will be proved in Section \ref{PrfABCDT}
(see Proposition \ref{mainest} below). 

\begin{proposition}
\label{mainineq}
For every fixed $1 \leq p < k$ and $T \in [w_p^{-1},w_{p+1}^{-1}]$, we have
\begin{equation}
\dist_{N_{[k]}}(\eta,m_{[k]}) \ll_{d,r,k} \max((\mathcal{M}T^a)^{1/2},(w_p T)^{-b/2},w_{p+1} T),
\end{equation}
where
\[
\mathcal{M} := \max(\dist_{M_I}(\eta_I,m_I),\dist_{M_J}(\eta_J,m_J)).
\]
\end{proposition}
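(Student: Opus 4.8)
The plan is to run the standard averaging argument for a coupling invariant under the product flow $h$, using the three properties recorded in \eqref{polgr}--\eqref{polmix} (Lemmas~\ref{lemma1}--\ref{lemma3}): polynomial growth of the Sobolev norm under $h_I$, power-law decay of the $h_I$-self-correlation at a scale governed by $w_p$, and near-invariance of functions under $h_J$ at scale $w_{p+1}$. First I would reduce to a single elementary tensor: since $N_{[k]}$ is the projective cross-norm of $N_I$ and $N_J$, it suffices to bound $|\eta(\phi_I\otimes\phi_J)-m_I(\phi_I)\,m_J(\phi_J)|$ for $\phi_I\in\cA_I$, $\phi_J\in\cA_J$ with $N_I(\phi_I)\le1$ and $N_J(\phi_J)\le1$, and then sum. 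Set $c:=m_I(\phi_I)$, which equals $m_I(h_I(t)\cdot\phi_I)$ for every $t$ since $m_I$ is $h_I$-invariant. By $h$-invariance of $\eta$,
\[
\eta(\phi_I\otimes\phi_J)=\frac1T\int_0^T\eta\big((h_I(t)\cdot\phi_I)\otimes(h_J(t)\cdot\phi_J)\big)\,dt .
\]
Because $h_I(t)$ acts by measure-preserving homeomorphisms, $\|h_I(t)\cdot\phi_I\|_\infty=\|\phi_I\|_\infty\ll1$, so replacing $h_J(t)\cdot\phi_J$ by $\phi_J$ inside the integral changes it by at most $\frac1T\int_0^T\|h_J(t)\cdot\phi_J-\phi_J\|_\infty\,dt\ll w_{p+1}T$ (Lemma~\ref{lemma3}). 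Writing $\Psi_T:=\frac1T\int_0^T h_I(t)\cdot\phi_I\,dt$ and splitting $\Psi_T=(\Psi_T-c)+c$, together with the estimate $|c\,(\eta_J(\phi_J)-m_J(\phi_J))|\le|c|\,\mathcal{M}\,M_J(\phi_J)\ll\mathcal{M}$, reduces the problem to bounding $\eta((\Psi_T-c)\otimes\phi_J)$.

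The heart of the argument is a Cauchy--Schwarz estimate for the probability measure $\eta$,
\[
|\eta((\Psi_T-c)\otimes\phi_J)|\le\Big(\int_{X_I}|\Psi_T-c|^2\,d\eta_I\Big)^{1/2}\Big(\int_{X_J}|\phi_J|^2\,d\eta_J\Big)^{1/2},
\]
followed by a variance estimate for the ergodic average $\Psi_T$. The second factor is $\ll1$, since $\int|\phi_J|^2\,d\eta_J\le m_J(|\phi_J|^2)+\mathcal{M}\,M_J(|\phi_J|^2)\ll1$ by \eqref{eq:n1} and \eqref{eq:n4}. For the first factor I would write $\eta_I(|\Psi_T-c|^2)=m_I(|\Psi_T-c|^2)+(\eta_I-m_I)(|\Psi_T-c|^2)$. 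Expanding the square and using $h_I$-invariance of $m_I$ rewrites $m_I(|\Psi_T-c|^2)$ as $\frac1{T^2}\int_0^T\int_0^T\big(m_I((h_I(t-s)\cdot\phi_I)\,\phi_I)-c^2\big)\,dt\,ds$, whose integrand is $\ll\max(1,w_p|t-s|)^{-b}$ by Lemma~\ref{lemma2}; an elementary bound on this double integral, valid because $T\ge w_p^{-1}$, gives $m_I(|\Psi_T-c|^2)\ll(w_pT)^{-b}$ (one may assume $b\le1$). For the remaining term, since $\eta_I-m_I$ annihilates constants it equals $(\eta_I-m_I)(\Psi_T^2-2c\,\Psi_T)$ evaluated on a \emph{compactly supported} smooth function, hence is $\le\mathcal{M}\,M_I(\Psi_T^2-2c\,\Psi_T)\ll\mathcal{M}\,N_I(\Psi_T)^2$ by \eqref{eq:n4} and $|c|\ll1$; Minkowski's integral inequality for the projective norm together with \eqref{polgr} give $N_I(\Psi_T)\le\frac1T\int_0^T N_I(h_I(t)\cdot\phi_I)\,dt\ll T^{a}$. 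Collecting the three contributions, and using $T\in[w_p^{-1},w_{p+1}^{-1}]$ (so $T\ge1$ and $w_{p+1}T\le1$) and $\mathcal{M}\ll1$,
\[
|\eta(\phi_I\otimes\phi_J)-m_I(\phi_I)\,m_J(\phi_J)|\ll\max\big((\mathcal{M}\,T^{2a})^{1/2},(w_pT)^{-b/2},w_{p+1}T\big),
\]
which is the claimed bound up to the exact value of the growth exponent (a fixed multiple of $a$); the precise bookkeeping is carried out in Proposition~\ref{mainest}. Taking the supremum over normalized elementary tensors finishes the proof.

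The step I expect to be the main obstacle is transferring the variance estimate from the reference measure $m_I$ to the coupling marginal $\eta_I$: after running the ergodic average for the long time $T\sim w_p^{-1}$, the inductive error $\mathcal{M}$ has been inflated by the polynomial growth factor of \eqref{polgr}, and one must check that this loss is still dominated by the mixing gain $(w_pT)^{-b/2}$ while at the same time the freezing loss $w_{p+1}T$ stays small --- which is precisely why the admissible window is $T\in[w_p^{-1},w_{p+1}^{-1}]$, and why optimizing over $T$ later (outside this proposition) interpolates between consecutive scales $w_p$. Two further, more routine technical points need attention: $\Psi_T$ is an integral, not a finite sum, of elementary tensors, so one must invoke Minkowski's integral inequality for the projective cross-norm (valid by convexity of the norm and continuity of $t\mapsto h_I(t)\cdot\phi_I$ into the relevant Sobolev space); and $\Psi_T-c$ is not compactly supported, so the Wasserstein bound can only be applied to the compactly supported combination $\Psi_T^2-2c\,\Psi_T$, after first discarding the constant using that $\eta_I-m_I$ kills constants.
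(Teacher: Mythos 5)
Your proof follows essentially the same strategy as the paper's (which proves a general version, Proposition~\ref{mainest}): average over the flow using $h$-invariance of $\eta$, pay $w_{p+1}T$ to freeze the $J$-side, apply Cauchy--Schwarz, and control the variance of the ergodic average by comparing $\eta_I$ to $m_I$ via the inductive Wasserstein bound. The decomposition is slightly different in form --- the paper uses a triangle inequality through the intermediate measures $(P_T\otimes\id)^*\eta$ and $\eta_I\otimes\eta_J$, and centers the average around $\eta_I(\phi_I)$ rather than your $c=m_I(\phi_I)$ --- but these are cosmetic.

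The one place where you diverge in substance is the transfer step in the variance estimate, and your version is quantitatively weaker. You apply the Wasserstein bound to the time-averaged function $\Psi_T^2-2c\Psi_T$ and then estimate $M_I(\Psi_T^2)\ll N_I(\Psi_T)^2\ll(AT^a)^2$, which costs $T^{2a}$. The paper instead \emph{first} expands $\eta_I(\Psi_T^2)-m_I(\Psi_T^2)$ as the double integral $\frac{1}{|V(T)|^2}\iint(\eta_I-m_I)\big((h_I(s-t)\cdot\phi_I)\,\phi_I\big)\,ds\,dt$, using $h_I$-invariance of both $\eta_I$ and $m_I$ to pull one time shift back to $0$, and \emph{then} applies the Wasserstein bound pointwise in $(s,t)$. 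Since now only one of the two factors carries a time shift, the estimate costs only $\max(1,|s-t|)^a$, and integrating gives $|V(T)|^a$ rather than $T^{2a}$. This ordering also sidesteps the completion/Minkowski issue you flag: the paper never needs $\Psi_T$ itself to lie in $\cA_I$ or to estimate a projective norm of an integral, only to bound $M_I((h_I(u)\cdot\phi_I)\phi_I)$ for each fixed $u$, which is a finite sum of elementary tensors. You correctly observe that the discrepancy $T^{2a}$ vs.\ $T^a$ is immaterial downstream (Theorem~\ref{main1_real} only uses that the growth exponent is a fixed constant depending on $k$, $d$), so your argument does prove the proposition up to renaming that constant, but the paper's order of operations is both sharper and cleaner, and worth adopting.
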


Next, we show
how to deduce Theorem \ref{main1_real} from Proposition \ref{mainineq}.

In order to finish the proof of Theorem \ref{main1_real},
we need to solve first the following problem. Given $q \geq 1$, we wish to minimize (over $p$ and $T$ for
which $T\in [w_p^{-1},w_{p+1}^{-1}]$) the 
expression
\begin{equation}
\label{maxexp}
\max((\mathcal{M}T^a)^{1/2}, (w_p T)^{-b/2}, w_{p+1} T)\le \sqrt{F} \max(q^{-\tau/2} T^{a/2}, (w_p T)^{-b/2}, w_{p+1} T),
\end{equation}
where $w_1,\ldots,w_k$ is a fixed sequence which satisfies
\[
1 = w_1 \geq w_2 \geq \ldots \geq w_k\quad\hbox{and}\quad w_k\le q^{-1}.
\]
We stress that we do not want the attained bound to depend on the particular choice of this sequence. \\

To solve this problem, let us choose  
$$
\delta = \gamma_0 \min(1,\tau), \quad \textrm{where $\gamma_0 = \min(1/k,1/(2ak))$},
$$
and note that
\[
k \delta \le 1 \qand 2ak \delta 
\le \tau.
\]
In particular, all of the $k$ points $q^{-\delta i}$, $0\le i\le k-1$, lie between $w_k$ and $w_1$. Hence, by the Pigeonhole Principle, there will be at least two consecutive points 
$q^{-\delta (i+1)}$ and $q^{-\delta i}$ for some $0\le i\le k-2$ which will end up in one and the same 
of the $(k-1)$ intervals
\[
[w_k,w_{k-1}), \ldots, [w_3,w_2), [w_2,w_1].
\]
Namely, for some $p=1,\ldots,k-1$ and $i=0,\ldots, k-2$,
\begin{equation}
\label{eq:b}
w_{p+1} \leq q^{-(i+1)\delta} \leq q^{-i \delta} \leq w_p
\end{equation}
For this particular $i$, we set $T = q^{(i+1/2)\delta}$, which clearly satisfies 
\[
w_p T = w_p q^{(i+1/2)\delta}  \geq q^{-i \delta} q^{(i+1/2)\delta}  = q^{\delta/2} \geq 1,
\]
and 
\[
w_{p+1} T \leq q^{-(i+1)\delta} q^{(i+1/2)\delta}  = q^{-\delta/2} \leq 1.
\]
Using \eqref{eq:b}, we deduce that that the expression \eqref{maxexp} is bounded from above by
\[
\sqrt{F} \max(q^{-(\tau - a (i+1/2)\delta)/2}, q^{-b \delta/4},q^{-\delta/2}).
\]
Since 
\[
\tau - a (i+1/2)\delta \geq \tau - a k\delta \geq \tau/2,
\]
by our choice of $\delta$, we conclude that \eqref{maxexp} is bounded from above by $q^{-\sigma}$, where
\[
\sigma = \min(\tau/2,b \delta/4,\delta/2) \geq \gamma_k \min(1,\tau),
\]
where $\gamma_k >0$ depends only  on $k$, $a$, and $b$. 
We note that, more precisely, $\gamma_k \gg 1/k$.\\

We have shown that for $q=\mathfrak{N}(g_{[k]})$, 
if there are constants $F\ge 1$ and $\tau > 0$ such that
\[
\max(\dist_{M_I}(\eta_I,m_I),\dist_{M_J}(\eta_J,m_J)) \leq F\, q^{-\tau},
\]
then 
$$
\dist_{N}(\eta,m_{[k]}) \ll_{d,k} \sqrt{F}\, q^{-\gamma_k \min(1,\tau)},
$$
where $\gamma_k$ depends only on 
$k, a$ and $b$. This finishes the proof of Theorem \ref{main1_real} (modulo the proof of Proposition \ref{mainineq}).
In Section \ref{PrfABCDT}, we prove a general version of Proposition \ref{mainineq}
(see Proposition \ref{mainest}).

\section{Approximate configurations in lattices}
\label{sec:conf}

Let $G$ be a connected semisimple Lie group with finite centre having no compact factors.
Let $\Gamma$ be an irreducible lattice in $G$. We fix a left-invariant Riemannian 
metric $\rho_G$ on $G$ which is bi-invariant under a fixed maximal compact subgroup of $G$.
The aim of this section is to prove Corollary \ref{main1_cor}.
Namely, we want to show that given $\eps>0$ 
and a $k$-tuple $(g_1,\ldots,g_k)\in G^k$ with sufficiently large
$$
w(g_1,\ldots,g_k):=\min_{i\ne j} \rho_G(g_i,g_j),
$$
there exist a $k$-tuples $(\gamma_1,\ldots, \gamma_k)\in \Gamma^k$
and an element $g\in G$ such that 
$$
\rho_G(g_i,g\, \gamma_i)<\eps\quad \quad\hbox{for all $i=1,\ldots,k$.}
$$
To construct such a tuple in $\Gamma^k$, we apply Theorem \ref{main1}
to the action of $G$ on the space $X=\Gamma\backslash G$ equipped with the invariant
probability measure $m$. 
Since $G$ has no compact factors, it is known (see, for instance, \cite[p.~285]{KSar})
that this action has strong spectral gap.
Hence, by Theorem \ref{main1}, 
for suitable $d\in \mathbb{N}$ and $\delta>0$,
\begin{equation}
\label{eq:mmm}
m((g_1\cdot \phi_1)\cdots (g_k\cdot \phi_k))=m(\phi_1)\cdots m(\phi_k)
+O_{d,k}\left(e^{-\delta\, w(g_1,\ldots,g_k)}\, \cS_d(\phi_1)\cdots \cS_d(\phi_k) \right)
\end{equation}
for all functions $\phi_1,\ldots \phi_k\in \cC_c^\infty(X)$.
We apply this estimate to suitably chosen $\phi_i$'s.
We choose the Haar measure $m_G$ on $G$ so that
$$
\int_G \Phi\, dm_G=
\int_{\Gamma\backslash G} \left( \sum_{\gamma\in\Gamma} \Phi(\gamma g)\right) dm(\Gamma g)
\quad\quad \hbox{for all $\Phi\in \cC_c(G)$.}
$$
Let $\Phi_\eps \in \cC_c^\infty(G)$ be 
a non-zero non-negative function such that 
$\supp(\Phi_\eps)\subset B_\eps(e)$,
$\| \Phi_\eps \|_1=1$, and
$\|\mathcal{D}\Phi_\eps\|_2\ll \eps^{-\theta}$
with some $\theta=\theta(d)>0$
for all differential operators $\mathcal{D}$ as in \eqref{eq:DDD} with $\deg(\mathcal{D})\le d$.
Let
$$
\phi_\eps(\Gamma g):=\sum_{\gamma\in\Gamma} \Phi_\eps(\gamma g).
$$
Then $m(\phi_\eps)=1$ and $\cS_d(\phi_\eps)\ll_d \eps^{-\theta}$.
Hence, we deduce from \eqref{eq:mmm} that 
$$
m((g_1\cdot \phi_\eps)\cdots (g_k\cdot \phi_\eps))=1+O_{d,k}\left(e^{-\delta\, w(g_1,\ldots,g_k)}\, \eps^{-k\theta} \right).
$$
If we take $c>k\theta/\delta$ and assume that
$w(g_1,\ldots,g_k)\ge c\log(1/\eps)$, then it follows that for all sufficiently
small $\eps>0$, we have 
$$
m((g_1\cdot \phi_\eps)\cdots (g_k\cdot \phi_\eps))>0.
$$
Finally, we observe that
\begin{align*}
m((g_1\cdot \phi_\eps)\cdots (g_k\cdot \phi_\eps))=
\int_{\Gamma\backslash G} \left(\sum_{(\gamma_1,\ldots,\gamma_k)\in \Gamma^k} \Phi_\eps(\gamma_1gg_1)\cdots \Phi_\eps(\gamma_kg g_k)\right) \, dm(\Gamma g),
\end{align*}
so that it follows that there exist
$(\gamma_1,\ldots,\gamma_k)\in \Gamma^k$ and $g\in G$ such that 
$$
\gamma_1gg_1,\ldots, \gamma_kgg_k\in \supp(\Phi_\eps)\subset B_\eps(e).
$$
Then for all $i=1,\ldots, k$, 
$$
\rho_G(g_i, g^{-1}\gamma_i^{-1})=\rho_G(\gamma_i g g_i,e_G)<\epsilon.
$$
This implies Corollary \ref{main1_cor}.

\section{Higher-order correlations for $S$-alebraic groups}
\label{sec:salg}

\subsection{Preliminaries}

Let $\bG$ be a simply connected absolutely simple algebraic groups
defined over a number field $F$, $S$ is a finite set of places of $F$,
and $G:=\prod_{v\in S} G_v$ where $G_v=\bG(F_v)$. 
For each $v\in S$, we fix a maximal $F_v$-split torus
$\bA_v$ of $\bG$ and set $A_v=\bA_v(F_v)$. We denote by $\gog_v$ the Lie algebra of the $p$-adic
Lie group $G_v$ and by $\Sigma_v\subset \bA^*_v$
the root system with respect to the adjoint action of the torus $\bA_v$ on the Lie algebra of $\bG$.
Then there is the root space decomposition $\gog_v$:
\begin{equation}
\label{eq:r0}
\gog_v=\gog_v^0+\bigoplus_{\alpha\in \Sigma_v} \gog_v^\alpha,
\end{equation}
where $\gog_v^0$ is the centraliser of $\hbox{Lie}(A_v)$ in $\gog_v$, and 
$$
\gog_v^\alpha:=\{Z\in \gog:\, \Ad(a)Z=\alpha(a)Z\quad\hbox{ for all $a\in A_v$}\}.
$$
It will be convenient to write succinctly 
$$
\gog:=\bigoplus_{v\in S} \gog_v\quad\hbox{and}\quad A:=\prod_{v\in S} A_v.
$$
Then we have the decomposition
\begin{equation}
\label{eq:ggg0}
\gog=\gog^0+\bigoplus_{\alpha\in \Sigma} \gog^\alpha,
\end{equation}
where $\Sigma=\cup_{v\in S} \Sigma_v$, and $\gog^\alpha$'s are root space for the action of $A$ on $\gog$.
These are precisely the roots spaces in the decompositions \eqref{eq:r0}.

We choose a system $\Sigma_v^+\subset \Sigma_v$ of positive roots,
and define the closed Weyl chamber
$$
A_v^+:=\{a\in A_v\,:\,\, |\alpha(a)|_v\ge 1 \quad\hbox{for all $\alpha\in \Sigma_v^+$}\}.
$$
There exists a good maximal compact subgroup
$K_v$ of $\bG(F_v)$ and a finite subset $\Omega_v$ of the centraliser
of $A_v$ in $\bG(F_v)$ such that the Cartan decomposition
\begin{equation}
\label{eq:cartan_padic}
\bG(F_v)=K_v A_v^+ \Omega_v K_v
\end{equation}
holds (\cite{BT}, \cite{Ti}). We write succinctly 
$$
K:=\prod_{v\in S} K_v,\quad\quad A^+:=\prod_{v\in S} A^+_v,\quad\quad \Omega:=\prod_{v\in S} \Omega_v.
$$
Then we have the decomposition
$$
G=KA^+\Omega K,
$$
which is an analogue of the real Cartan decomposition \eqref{eq:cartan}.

We fix a norm $\|\cdot \|$ on $\gog$ which is the maximum of fixed norms on $\gog_v$,
and define a sub-multiplicative function $\|\cdot\|_{\rm op}$ on $G$ by
\begin{equation}
\label{eq:norm00}
\|g\|_{\rm op}:= \max \{ \|\Ad(g)Z\|:\, Z\in\gog\; \hbox{ with $\|Z\|=1$}\}.
\end{equation}
This definition is similar to the definition \eqref{def*}.
One can also check as before that $\|g\|_{\rm op}\ge 1$ for all $g\in G$.

The following lemma is an analogue of Lemma \ref{lemma1_outline_main1}.

\begin{lemma}
	\label{lemma1_outline_main1_padic}
	\begin{enumerate}
		\setlength\itemsep{0.5em}
		
		\item[(i)] There exists $c_0\ge 1$ such that for every
		 $g= k_1 a \omega k_2 \in K A^+ \Omega K$,
		$$
		c_0^{-1}\, \left(\max_{\alpha \in \Sigma^{+}} \alpha(a)\right)\le \|g\|_{\rm op} \le c_0\, \left(\max_{\alpha \in \Sigma^{+}} \alpha(a)\right). 
		$$
		
		\item[(ii)] For every $g\in G$, there exists $Z\in \gog_v$ for some $v\in S$
		such that $\Ad(Z)$ is nilpotent, $\|Z\|=1$, and 
		$$
		\|g\|_{\rm op}\le c_0\,\|\Ad(g)Z\|.
		$$
		
		\item[(iii)] There exist constants $c_1,c_2\ge 1$ such that 
		\[
		c_2^{-1} \|g\|_{\rm op}^{c_1^{-1}} \leq \hbox{\rm H}(g) \leq c_2 \|g\|^{c_1}_{\rm op}
		\] 
		for all $g \in G$.

		\item[(iv)] There is a constant $c_3 \geq 1$ such that
		for every $X \in \gog_v$, $v\in S$, such that  
		$\hbox{\rm Ad}(X)$ is nilpotent, we have
		\[
		c_3^{-1}\, \max(1,\|X\|) \leq \|\exp(X)\|_{\rm op} \leq c_3\, \max(1,\|X\|)^{\dim(G)}.
		\]
	\end{enumerate}
\end{lemma}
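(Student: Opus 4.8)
\textbf{Proof plan for Lemma~\ref{lemma1_outline_main1_padic}.} The plan is to follow the proof of Lemma~\ref{lemma1_outline_main1} verbatim, the only genuinely new point being that the norm on $\gog$ fixed around \eqref{eq:norm00} need not be $\Ad(K)$-invariant and that the Cartan decomposition \eqref{eq:cartan_padic} carries an extra finite factor $\Omega$. Both issues are absorbed into one observation: since each $K_v$ is compact and $\Omega$ is finite, there is a constant $c\ge 1$ with $c^{-1}\|Z\|\le\|\Ad(k\omega)Z\|\le c\|Z\|$ for all $k\in K$, $\omega\in\Omega$, $Z\in\gog$; this quasi-invariance is precisely what turns the clean equalities of Lemma~\ref{lemma1_outline_main1}(i)--(ii) into two-sided bounds with a constant $c_0$ here. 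For (i), I would use that \eqref{eq:ggg0} is a fixed finite direct sum, so the chosen norm on $\gog$ is comparable to $\max_\alpha\|Z_\alpha\|$; since $\Ad(a)$ acts on $\gog^\alpha$ by the scalar $\alpha(a)$ and trivially on $\gog^0$, and since positive roots are $\ge 1$ and negative roots $\le 1$ on $A^+$, this gives $\|a\|_{\rm op}\asymp\max_{\alpha\in\Sigma^+}\alpha(a)$ for $a\in A^+$, and then $\|g\|_{\rm op}=\|\Ad(k_1 a\omega k_2)\|_{\rm op}\asymp\|a\|_{\rm op}$ by the quasi-invariance above. For (ii), I would pick $v$ and $\alpha_0\in\Sigma_v^+$ realising the maximum in (i), take a unit vector $Y\in\gog_v^{\alpha_0}$ (so $\ad(Y)$ is nilpotent, since $\ad(Y)$ raises the $\Sigma$-grading and $\Sigma$ is finite), and set $Z$ to be the unit normalisation of $Z_0:=\Ad(k_2^{-1}\omega^{-1})Y$; then $\ad(Z)$ is conjugate to $\ad(Y)$, hence nilpotent, and since $\omega$ centralises $A_v$ one has $\Ad(\omega^{-1})Y\in\gog_v^{\alpha_0}$, so $\Ad(g)Z_0=\Ad(k_1)\Ad(a)Y=\alpha_0(a)\Ad(k_1)Y$ and $\|\Ad(g)Z\|\asymp|\alpha_0(a)|_v\asymp\|g\|_{\rm op}$ by (i); collecting constants into $c_0$ gives (ii).

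For (iii), I would run $g=k_1 a\omega k_2$ through both functions. As in (i), $\log\|g\|_{\rm op}\asymp\max_v\|\log a_v\|$, where $\log A_v^+$ is viewed as a rational polyhedral cone and $\|\cdot\|$ is any norm on the ambient real vector space; the same reasoning applied to the defining embedding $\bG\subset\GL_n$ (compactness of $K_v$, finiteness of $\Omega_v$, and the fact that $a_v$ acts on $F_v^n$ through the weights of the standard representation) gives $\log\hbox{\rm H}(g)=\sum_{v\in S}\log\|g_v\|_v\asymp\sum_v\|\log a_v\|$. Since the two right-hand sides are comparable up to the constant $|S|$, we get $\log\hbox{\rm H}(g)\asymp\log\|g\|_{\rm op}$, which after exponentiating is exactly the polynomial equivalence claimed in (iii). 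The one input here that is not purely formal is that $\max_{\alpha\in\Sigma_v^+}\log|\alpha(a_v)|_v$ and $\max_\chi\log|\chi(a_v)|_v$ are each bounded below by a positive multiple of $\|\log a_v\|$ on the cone; for the roots this holds because $\Sigma_v$ spans $X^*(\bA_v)\otimes\bR$ ($\bG$ being semisimple), and for the weights because the standard representation is faithful. Alternatively one may simply quote the standard polynomial comparison of heights attached to finite-dimensional representations of a semisimple group, in the spirit of \cite[Lemma~2.5]{GMO}.

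Finally, (iv) is proved exactly as Lemma~\ref{lemma1_outline_main1}(iv), now invoking Lemma~\ref{l:norm} over $\mathbb{K}=F_v$. Since $F$ is a number field, $F_v$ has characteristic $0$, so $\ad(X)$ nilpotent forces $X$ to be a nilpotent matrix in $\hbox{M}_n(F_v)$; hence $\exp(X)$ is a finite sum and $\Ad(\exp(X))=\exp(\ad(X))$ with $\ad(X)$ nilpotent. For the upper bound, $\exp(\ad(X))=\sum_{i<\dim\gog_v}(1/i!)\,\ad(X)^i$ has boundedly many terms, so the $p$-adic sizes $|1/i!|_v$ only enter the implied constant, giving $\|\exp(X)\|_{\rm op}\ll\max(1,\|X\|)^{\dim(G)}$. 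For the lower bound, Lemma~\ref{l:norm} yields $\|\exp(X)\|_{\rm op}=\|\exp(\ad(X))\|\gg\|\ad(X)\|\gg\|X\|$ (the last step since $\gog_v$ has trivial centre, as $\bG$ is simple, so $\ad$ is a linear embedding), while $\|\exp(\ad(X))\|\ge 1$ because $\exp(\ad(X))$ is unipotent (apply $\|uv\|=\|v\|$ to a vector fixed by $u$, as in the proof of Lemma~\ref{l:norm}); together these give $\|\exp(X)\|_{\rm op}\gg\max(1,\|X\|)$. The main obstacle is part (iii): parts (i), (ii), (iv) are essentially transcriptions of the real arguments with the single extra ingredient of the $\Ad(K)$-quasi-invariance constant, whereas (iii) rests on the comparison between the adjoint height $\|\cdot\|_{\rm op}$ and the height $\hbox{\rm H}$ attached to the chosen matrix realisation of $\bG$, which is standard but is the one step that requires an argument beyond bookkeeping.
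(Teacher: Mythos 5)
Your plan is correct and for parts (i), (ii) and (iv) it is essentially a transcription of the paper's own argument: the compactness/quasi-invariance observation about $K$ and $\Omega$ plays exactly the role of the paper's constant $c_0'$, the choice of $Z$ in (ii) via $\Ad(\omega k_2)^{-1}Y$ is the same (your extra remark that $\Omega$ centralizes $A_v$, hence preserves root spaces, is true and makes the computation slightly cleaner, though the paper does not need it), and (iv) is dispatched by Lemma~\ref{l:norm} over $F_v$ just as you say (the paper simply writes ``exactly as in Lemma~\ref{lemma1_outline_main1}'').

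The genuine divergence is in (iii). The paper's route is explicit and representation-theoretic: it passes from $\|a\|_{\rm op}$ to $\max_{\alpha\in\Pi}|\alpha(a)|$ over simple roots, from $\mathrm{H}(a)$ to $\max_{\chi\in\Phi}|\chi(a)|$ over weights of the defining representation, shows that on $A^+$ this equals the max over \emph{dominant} weights, and then compares dominant weights with simple roots by writing each fundamental weight as $\psi^\ell=\prod_{\alpha\in\Pi_v}\alpha^{m_\alpha}$ with \emph{strictly} positive $m_\alpha$ — which is exactly where the hypothesis that $\bG$ is absolutely simple (so $\Sigma_v$ is irreducible) is invoked. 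Your route is more abstract: you compare both $\log\|a\|_{\rm op}$ and $\log\mathrm{H}(a)$ to a norm $\|\log a_v\|$ on the real span of $X_*(\bA_v)$, restricted to the rational polyhedral cone $\log A_v^+$. That argument is sound, but you should make the two lower bounds explicit: $\max_{\alpha\in\Sigma_v^+}\langle\alpha,X\rangle\gg\|X\|$ on the dual cone holds because it is a nonnegative piecewise-linear function vanishing only at $0$ (the roots span, $\bG$ being semisimple), and $\max_{\chi\in\Phi_v}\langle\chi,X\rangle\gg\|X\|$ holds because it is nonnegative on all of the space — this needs $\sum_\chi m_\chi\chi=0$, which you get from $\bG\subset\SL_n$ (a semisimple group has no nontrivial characters), not merely from faithfulness — and vanishes only at $0$ because the weights span up to finite index. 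Once stated this way your argument actually avoids irreducibility of $\Sigma_v$, which is a (small) simplification over the paper's proof, though of course the surrounding hypotheses make $\bG$ absolutely simple anyway. The only thing I would flag as an actual gap is that you appeal to ``faithfulness'' for the lower bound on the weight side, where the needed input is the vanishing of the weighted sum of weights, not faithfulness alone; with that supplied, the argument closes.
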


\begin{proof}
	We first observe that by compactness there exists $c'_0\ge 1$ such that
	for every $g\in (K\cup \Omega K)^{\pm 1}$ and $Z\in\gog$,
	$$
	(c_0')^{-1}\, \|Z\|\le \|\Ad(g)Z\|\le c_0'\, \|Z\|.
	$$
	This, in particular, implies that for every $g=k_1a\omega k_2\in KA^+\Omega K$,
	\begin{equation}
	\label{eq:a_Salg}
	(c'_0)^{-2}\, \|a\|_{\rm op} \le \|g\|_{\rm op}\le (c_0')^2\, \|a\|_{\rm op}.
	\end{equation}
	Similarly, we also have
	$$
	\|a\|_{\rm op} \ll \hbox{H}(g)\ll  \|a\|_{\rm op}.
	$$
	Hence, it is sufficient to prove (i) and (iii) for $g\in A^+$.
	
	Decomposing $Y\in \gog$ with respect to the decomposition \eqref{eq:ggg0}
	as $Y=\sum_{\alpha\in\Sigma\cup\{0\}} Y_\alpha$,
	we deduce that for every $a\in A^+$,
	\begin{equation}
	\label{eq:nnn}
	\|\hbox{Ad}(a)Y\|\le\max_{\alpha\in \Sigma\cup \{0\}} |\alpha(a)|\,\|Y_\alpha\|\le 
	\left(\max_{\alpha\in \Sigma\cup \{0\}} |\alpha(a)|\right) \|Y\|
	=\left(\max_{\alpha\in \Sigma^+} |\alpha(a)|\right) \|Y\|.
	\end{equation}
	If we choose $\alpha_0\in\Sigma^+$ such that $\alpha_0(a)=\max_{\alpha\in \Sigma^+} \alpha(a)$ and $Y\in \gog^{\alpha_0}$, then the equality in \eqref{eq:nnn} holds.
	This implies that given any $a\in A^+$, there exists $Y\in \gog$ 
	contained in a single root space such that
	$\|Y\|=1$ and
	\begin{equation}
	\label{eq:a7}
	\|a\|_{\rm op}=\|\Ad(a)Y\|=\max_{\alpha\in \Sigma^+} |\alpha(a)|.
	\end{equation}
	This completes the proof of (i).

	To prove (ii), we observe that it follows from \eqref{eq:a_Salg} and \eqref{eq:a7}
	that for $g=k_1a\omega k_2\in KA^+\Omega K$,
	\begin{align*}
	\|g\|_{\rm op}&\le (c_0')^2\, \|a\|_{\rm op}=(c_0')^2\, \|\Ad(a)Y\|\le (c_0')^3\, \|\Ad(k_1a\omega k_2) \Ad(\omega k_2)^{-1}Y\|\\
	&= (c_0')^3 \|\Ad(\omega k_2)^{-1}Y\|\, \|\Ad(g) Z\|\le (c_0')^4\, \|\Ad(g) Z\|,
	\end{align*}
	where $Z=\frac{\Ad(\omega k_2)^{-1}Y}{\|\Ad(\omega k_2)^{-1}Y\|}$.
	Since $Y$ is contained in a single root spaces $\gog^\alpha_v$ for some $v\in S$,
	the map $\Ad(Y)$ is nilpotent. This also implies that
	$Z\in \gog_v$, and $\Ad(Z)$ is nilpotent. Hence, (ii) is proved.
	
	Now we proceed with the proof of (iii).
	Let us fix the set of simple roots $\Pi_v\subset \Sigma^+_v$, $v\in S$.
	Then every $\alpha\in \Sigma_v^+$ can be expressed as a  product of simple roots
	with non-negative exponents, so that
	there exists $c'_1\ge 1$ such that for every $a\in A^+$,
	\begin{equation}
	\label{eq:aa}
	\left(\max_{\alpha\in \Pi} |\alpha(a)|\right)\le \|a\|_{\rm op}\le \left( \max_{\alpha\in \Pi} |\alpha(a)|\right)^{c_1'},
	\end{equation}
	where $\Pi=\cup_{v\in S}\Pi_v$ is considered as a subset of the set of characters of $A$.

	We observe that $\|\cdot\|_v\gg 1$ on $G_v$ for $v\in S$, so that 
	$$
	\max_{v\in S} \|g_v\|_v\ll \hbox{H}(g)\ll \left(\max_{v\in S} \|g_v\|_v\right)^{|S|}
	\quad \hbox{ for $g=(g_v)_{v\in S}\in G$.}
	$$
	We consider the representation of $\bG$ defined by
	the embedding $\bG \subset \hbox{GL}_n$. Let $\Phi_v$ denote the set of weights of this representation with respect to the torus $\bA_v$. Since $\bA_v$ is split over $F_v$,
	the action of $A_v=\bA_v(F_v)$ on $F_v^n$ is completely reducible.
	This implies that for $a_v\in A_v$, $v\in S$,
	$$
	\max_{\chi\in \Phi_v} |\chi(a_v)|\ll \|a_v\|_v\ll \max_{\chi\in \Phi_v} |\chi(a_v)|.
	$$
	Hence, there exists $c_2\ge 1$ such that for every $a\in A$,
	\begin{equation}
	\label{eq:aaa}
	c_2^{-1} \left(\max_{\chi\in \Phi} |\chi(a)|\right)\le \hbox{H}(a)\le c_2 \left(\max_{\chi\in \Phi} |\chi(a)|\right)^{|S|},
	\end{equation}
	where $\Phi=\cup_{v\in S} \Phi_v$ is considered as a subset of the set of characters of $A$.
	We denote by $\Pi_v^\vee$ the set of fundamental weights corresponsing to $\Pi_v$.
	We recall that a weight $\chi\in \Phi_v$ is called dominant if 
	$$
	\chi=\prod_{\psi\in \Pi_v^\vee} \psi^{n_\psi}
	$$
	with some non-negative integers $n_\psi$. Since every $\chi\in \Phi_v$ is of the form
	$$
	\chi=\psi \prod_{\alpha\in\Pi_v} \alpha^{-s_\alpha}
	$$
	for some dominant weight $\psi$ and non-negative integers $s_\alpha$, it follows that for every $a\in A^+$,
	\begin{equation}
	\label{eq:domeq}
	\max_{\chi\in \Phi} |\chi(a)|=\max_{\psi\in \Phi^{dom}} |\psi(a)|,
	\end{equation}
	where $\Phi^{dom}$ denotes the subset of dominant weights of $\Phi$. 
	For every $\psi\in \Pi_v^\vee$, there
	exists $\ell\in \mathbb{N}$ such that
	$$
	\psi^\ell=\prod_{\alpha\in \Pi_v} \alpha^{m_\alpha}
	$$
	for some positive integers $m_\alpha$ (see \cite[Ch. 3, \S1.9]{OV}).
	Here we used that since $\bG$ is absolutely simple, the root systems $\Sigma_v$ are irreducible.
	Hence, we deduce that there exists $c_1''\ge 1$ such that
	for every $a\in A^+$,
	$$
	\left( \max_{\alpha\in \Pi} |\alpha(a)|\right)^{(c_1'')^{-1}}\le \max_{\psi\in \Phi^{dom}} |\psi(a)|\le \left( \max_{\alpha\in \Pi} |\alpha(a)|\right)^{c_1''}.
	$$
	Combining this estimate with \eqref{eq:aa},\eqref{eq:aaa} and \eqref{eq:domeq}, we deduce (iii).
	
	Finally, the claim (iv) is proved exactly as in Lemma \ref{lemma1_outline_main1}.
\end{proof}

\subsection{Reductions in the proof of Theorems \ref{main1_Sarithmetic} and \ref{main1_coupl_Sarithmetic}}

The proof of Theorems \ref{main1_Sarithmetic} and \ref{main1_coupl_Sarithmetic}
follows the same steps as the proof of Theorem \ref{main1}, and we freely use the notation introduced in Section \ref{outline_main1_0}.
It is clear that Theorem \ref{main1_Sarithmetic} 
is a particular case of Theorem \ref{main1_coupl_Sarithmetic} with $\xi=m_{\Delta_{[k]}(X)}$.
As in Section \ref{outline_main1_0}, we introduce the projective tensor product norms. 
For $I\subset [k]$, we denote by $\cC_c^\infty(X)^U_I$ the algebraic tensor product of the 
algebras $\cC_c^\infty(X)^U$ over the set of indices in $I$.
For a function $\phi \in\cC_c^\infty(X)_I^U$, we define
\[
\cS_{d,U,I}(\phi) := \inf\Big\{ \sum_{j} \cS_d(\phi_{i_1\,j}) \cdots \cS_d(\phi_{i_l\,j}) \Big\},
\]
where the infimum is taken over all possible ways to write $\phi$ as a finite sum of the form
\[
\phi = \sum_j \phi_{i_1\,j} \otimes \cdots \otimes \phi_{i_l\,j}, \quad\quad \textrm{with $\phi_{i_1\,j},\ldots,\phi_{i_l\,j} \in\cC_c^\infty(X)^U$}.
\]
Theorem \ref{main1_coupl_Sarithmetic} can be reformulated in terms of the Wasserstein distance 
$\dist_{d,U,I}=\dist_{\cS_{d,U,I}}$
as:\\

\noindent {\it
For all $k \ge 2$ and sufficiently large $d$, there exists $\delta=\delta(k,d) > 0$ such that for all
compact open subgroups $U\subset G_f$ and $g_{[k]} = (g_1,\ldots,g_k) \in G_{[k]}$, 
$$
\dist_{d,U,[k]}(g_{[k]}^{-1} \cdot \xi,m_{[k]}) \ll_{d,U,k} \mathfrak{H}(g_{[k]})^{-\delta}.
$$
}\\

Theorem \ref{main1_coupl_Sarithmetic} will be deduced from the following general
inductive estimate which generalises Theorem \ref{main1_real}.

\begin{theorem}
	\label{main1_padic}
	Fix $d,r\in\bN$ such that \eqref{a1}--\eqref{a4} hold and a compact open subgroup $U$ of $G_f$.
	Fix $q \geq 1$, an integer $k \geq 2$, and a $k$-tuple $g_{[k]} = (g_1,\ldots,g_k) \in G_{[k]}$. Suppose that
	\begin{itemize}
		\item $\xi$ is a $\Delta_{[k]}(G)$-invariant $k$-coupling of $(X,m)$.
		\item There exists $\eps > 0$ such that 
		\begin{equation}
		\label{defQ_1}
		\max_{i \neq j} \|g_i^{-1} g_j\|_{\rm op}\geq q^{\eps}.
		\end{equation}
		
		\item There exists $\delta_2>0$ such that for all $\phi_1,\phi_2\in\cC_c^\infty(X)^U$ and $g\in G$,
		\begin{equation}
		\label{KS_0}
		| m((g \cdot \phi_1) \phi_2) - m(\phi_1) m(\phi_2) | \ll_{d,U,r} \|g\|_{\rm op}^{-\delta_2} \, \cS_{d+r}(\phi_1) \, \cS_{d+r}(\phi_2), 
		\end{equation}

		\item There exist $F \geq 1$ and $\tau > 0$ such that
		\begin{equation}
		\label{k-1ass_1}
		\dist_{d,U,I}(g_I^{-1} \cdot \xi_I, m_I) \leq F\, q^{-\tau}, \quad \textrm{for all $I \subsetneq [k]$}.
		\end{equation}
	\end{itemize}
	Then there exists $\gamma_k > 0$, which only depends on $k$, $d$, $r$ and $\delta_2$, such that 
	\begin{equation}
	\label{conclus_1}
	\dist_{d + r,U,[k]}(g_{[k]}^{-1} \cdot \xi,m_{[k]}) \ll_{d,U,r, k} \sqrt{F} \, q^{-\gamma_k \min(\eps,\tau)}.
	\end{equation}
\end{theorem}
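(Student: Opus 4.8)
The plan is to run the very same argument as for Theorem~\ref{main1_real}, replacing the Riemannian metric $\rho_G$ by the two substitute estimates N2 and N$2'$ and replacing Lemma~\ref{lemma1_outline_main1} by its $S$-algebraic counterpart Lemma~\ref{lemma1_outline_main1_padic}. Set $\eta=g_{[k]}^{-1}\cdot\xi$, which is a $g_{[k]}^{-1}\Delta_{[k]}(G)g_{[k]}$-invariant $k$-coupling of $(X,m)$, and put $Q=\max_{i\ne j}\|g_i^{-1}g_j\|_{\rm op}$, so $Q\ge q^{\eps}$ by \eqref{defQ_1}. By Lemma~\ref{lemma1_outline_main1_padic}(ii) there is a \emph{single} place $v\in S$ and a vector $Z\in\gog_v$ with $\Ad(Z)$ nilpotent, $\|Z\|=1$, and $Q\le c_0\|\Ad(g_{i_1}^{-1}g_{i_s})Z\|$ for the pair $(i_1,i_s)$ realizing $Q$. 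Since each $\Ad(g_i^{-1}g_j)$ acts on $\gog=\bigoplus_{w\in S}\gog_w$ preserving the summand $\gog_v$, all the vectors $Z_j:=\Ad(g_j^{-1}g_s)Z/\|\Ad(g_1^{-1}g_s)Z\|$ lie in $\gog_v$ and have $\Ad(Z_j)$ nilpotent; after relabelling, $1=w_1\ge w_2\ge\cdots\ge w_k$ with $w_j=\|Z_j\|$, and now $w_k\ll q^{-\eps}$ in place of the bound $w_k\le q^{-1}$ available over $\bR$. Because the $Z_j$ are nilpotent, the maps $t\mapsto\exp(tZ_j)$ are genuine one-parameter unipotent subgroups of $G_v$ (the exponential series terminates, also non-archimedeanly), and $(Z_1,\dots,Z_k)$ spans a line in $\Ad(g_{[k]})^{-1}\Lie(\Delta_{[k]}(G))$; hence the diagonal flow $h(t)\cdot(x_1,\dots,x_k)=(\exp(tZ_1)\cdot x_1,\dots,\exp(tZ_k)\cdot x_k)$ preserves $\eta$, and its $I$- and $J$-marginals preserve $\eta_I,\eta_J$ for any split $[k]=I\sqcup J$ with $I=[1,p]$, $J=[p+1,k]$.

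Next I would prove the three flow estimates that are the $S$-algebraic analogues of Lemmas~\ref{lemma1},~\ref{lemma2},~\ref{lemma3}. The growth bound $N_I(h_I(t)\cdot\phi_I)\ll\max(1,|t|)^{a}N_I(\phi_I)$ comes from N3 \eqref{a3} and $\|\Ad(\exp(tZ_j))\|\ll\max(1,|t|w_j)^{\dim G}$, which is contained in the proof of Lemma~\ref{lemma1_outline_main1_padic}(iv). The mixing bound $|m_I((h_I(t)\cdot\phi_I)\phi_I)-m_I(\phi_I)^2|\ll\max(1,w_p|t|)^{-\delta_2}N_I(\phi_I)^2$ follows by applying the hypothesis \eqref{KS_0} in each factor, using $\|\exp(tZ_l)\|_{\rm op}\gg\max(1,|t|w_l)\ge\max(1,|t|w_p)$ for $l\in I$, again from Lemma~\ref{lemma1_outline_main1_padic}(iv). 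The displacement bound $\|h_J(t)\cdot\phi_J-\phi_J\|_\infty\ll w_{p+1}|t|\,N_J(\phi_J)$ follows from N1--N2 when $v$ is archimedean (via $\rho_{G_v}(\exp(tZ_l),e)\le|t|w_l$), and from N1 and N$2'$ when $v$ is non-archimedean, using $\|\Ad(\exp(tZ_l))-\id\|\ll|t|w_l$ whenever $|t|w_l\le1$ — which holds in the relevant range $|t|\le w_{p+1}^{-1}$ since $w_l\le w_{p+1}$ for $l\in J$ — once more by nilpotency of $\Ad(Z_l)$. A genuinely new point here is that for $v\in S_f$ the natural time variable lies in $F_v$, whose value group $|F_v^{\times}|_v=q_v^{\mathbb{Z}}$ is discrete; whenever a real target "time" $T$ is required below, one selects $t\in F_v^{\times}$ with $|t|_v\in[q_v^{-1}T,T]$, costing only a bounded ($q_v$-dependent) factor that is absorbed into the implied constants.

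With these three estimates verified, I would feed them into the general inequality of Section~\ref{PrfABCDT} (Proposition~\ref{mainest}), which yields, for every $1\le p<k$ and every $T\in[w_p^{-1},w_{p+1}^{-1}]$,
\[
\dist_{d+r,U,[k]}(\eta,m_{[k]})\ll_{d,U,r,k}\max\bigl((\mathcal{M}T^{a})^{1/2},\,(w_pT)^{-\delta_2/2},\,w_{p+1}T\bigr),
\]
where $\mathcal{M}=\max\bigl(\dist_{d,U,I}(\eta_I,m_I),\dist_{d,U,J}(\eta_J,m_J)\bigr)\le Fq^{-\tau}$ by \eqref{k-1ass_1}; the passage from Sobolev level $d$ inside $\mathcal M$ to level $d+r$ on the left uses the multiplicativity N4 \eqref{a4}. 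Finally, the optimization over $p$ and $T$ is exactly the pigeonhole argument closing Section~\ref{sec:p2}: choosing $\delta=\gamma_0\min(\eps,\tau)$ with $\gamma_0=\gamma_0(k,a,\delta_2)$ small enough that $k\delta\le\eps$ and $2ak\delta\le\tau$, the $k$ reals $q^{-i\delta}$ ($0\le i\le k-1$) lie in $[w_k,w_1]$ — this is where the weaker bound $w_k\ll q^{-\eps}$ forces $\eps$ rather than $1$ into the exponent — so two consecutive ones fall in a common interval $[w_{p+1},w_p)$, and taking $T=q^{(i+1/2)\delta}$ makes all three terms above $\ll\sqrt{F}\,q^{-\gamma_k\min(\eps,\tau)}$, which is \eqref{conclus_1}. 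The main obstacle, compared with the Lie case, is precisely this non-archimedean bookkeeping: making the flow generated by a nilpotent $Z\in\gog_v$ with $v\in S_f$ precise, controlling it through the norm-based estimate N$2'$ instead of a metric, and absorbing the discreteness of $|F_v^{\times}|_v$; once this is set up, the inductive and optimization steps are formally identical to the proof of Theorem~\ref{main1_real}.
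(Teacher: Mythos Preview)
Your proposal is correct and follows essentially the same route as the paper's proof in Section~\ref{sec:hij}: construct the nilpotent direction $Z\in\gog_v$ via Lemma~\ref{lemma1_outline_main1_padic}(ii), set up the unipotent flows $h_j$, verify the three flow estimates (growth, mixing, displacement) using N1--N$2'$--N3--N4 and Lemma~\ref{lemma1_outline_main1_padic}(iv), plug into Proposition~\ref{mainest}, and optimize via pigeonhole.

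Two small points where your write-up diverges from the paper. First, the paper does not parametrize the flow by $F_v$; it takes $\bK\in\{\bR,\bQ_p\}$ with $\bK\subset F_v$ and runs the flow over $\bK$, so that Proposition~\ref{mainest} (and in particular the integral estimate Lemma~\ref{l:integral}) applies verbatim---there is no need for your ``select $t\in F_v^\times$ with $|t|_v\in[q_v^{-1}T,T]$'' step, which in any case does not quite match the averaging in $P_T$. Second, the final optimization in the paper is phrased with an auxiliary parameter $\theta=\max\bigl(w_k^{1/k},\,q^{-\tau/a(2k-3)}\bigr)$ and pigeonhole on $\theta^i$, rather than your direct reuse of the $q^{-i\delta}$ argument from Section~\ref{sec:p2}; the two are equivalent and both produce an exponent of the form $\gamma_k\min(\eps,\tau)$.
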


It is straightforward to deduce Theorems \ref{main1_Sarithmetic} and \ref{main1_coupl_Sarithmetic} from 
Theorem \ref{main1_padic} by taking $q=\min_{i\ne j} \|g_ig_j^{-1}\|_{\rm op}$ and $\epsilon=1$ (cf. 
the proof of Theorems \ref{main1} and \ref{main1_coupl} in Section \ref{sec:proof_lie}), so that we omit the details. 
Although the parameter $\epsilon$ is not needed for the proof of 
Theorems \ref{main1_Sarithmetic} and \ref{main1_coupl_Sarithmetic}, it will be important when estimating higher order correlations for adele groups. The rest of this section occupies the proof of Theorem \ref{main1_padic}.

\subsection{Proof of Theorem \ref{main1_padic}}\label{sec:hij}

We proceed as in Section \ref{sec:p2}.
We set 
$$
Q: = \max_{i\ne j}\|g_{i}^{-1} g_{j}\|_{\rm op}
$$
and fix indices $i_0\ne j_0 \in [k]$ such that $Q = \|g_{i_0}^{-1}g_{j_0} \|_{\rm op}$.
By Lemma \ref{lemma1_outline_main1_padic}(ii), 
there exists $Z \in \gog_v$  for some $v\in S$ such that 
$\Ad(Z)$
is a nilpotent endomorphism of $\mathfrak{g}$, $\|Z\| = 1$, and
\[
Q = \|g_{i_0}^{-1} g_{j_0}\|_{\rm op} \le c_0 \|\Ad(g_{i_0}^{-1} g_{j_0})Z\|.
\]
After reindexing, we may assume that
\[
\|\Ad(g_{1}^{-1}g_{s})Z\| \geq \|\Ad(g_{2}^{-1}g_{s})Z\| \geq \ldots \geq \|\Ad(g_{k}^{-1}g_{s})Z\|,
\]
and 
$$
\|\Ad(g_{1}^{-1}g_{s})Z\|\ge c_0^{-1}\, Q.
$$
We set 
\[
Z_j = \frac{\Ad(g_{j}^{-1} g_{s})Z}{\|\Ad(g_{1}^{-1} g_{s})Z\|}\quad \hbox{and}
\quad w_j = \|Z_j\|, \quad\quad \textrm{for $j = 1,\ldots,k$}.
\]
Then
\begin{equation}
\label{normalization_padic}
1 = w_1 \geq w_2 \geq \ldots \geq w_k\quad \hbox{and} \quad w_k \le \|\Ad(g_{1}^{-1} g_{s})Z\|^{-1}\le c_0\,Q^{-1}.
\end{equation}
We note that all elements $Z_j$ are contained in $\gog_v$ for a fixed  $v\in S$.
We set $\mathbb{K}$ to be either $\mathbb{R}$ or $\mathbb{Q}_p$, so that $\mathbb{K}\subset F_v$, and consider the flows
$h_j : \mathbb{K} \times X \ra X$ defined by
$$
h_j(t) \cdot x = \exp(tZ_j) \cdot x, \quad\quad \textrm{for $x \in X$}.
$$
We fix an index $1 \leq p \le k-1$ and 
consider the decomposition $[k] = I \sqcup J$, where
$I = [1,p]$ and $J = [p+1,k]$.
Then we also have the diagonal flows 
$$
h_I : \bK \times X_I \ra X_I,\quad h_J : \bK \times X_J \ra X_J,\quad 
h : \bK \times X_{[k]} \ra X_{[k]}.
$$
We note that the vectors $Z_j$ are chosen so that the coupling $\eta=g_{[k]}^{-1}\cdot \xi$ is invariant under the flows $h$, and its marginals $\eta_I$
and $\eta_J$ are invariant under the flows $h_I$ and $h_J$ respectively.

We fix a compact open subgroup $U$ of $G_f$, and set $\cA^U=\cC^\infty_c(X)^U$.
For $v\in S_\infty$, we denote by $\rho_{G_v}$ the left-invariant Riemannian metric on $G_v$
defined as in Section \ref{Subsec:notation}. 
For $v\in S_f$, we denote by $\|\cdot \|$ the operator norms on $\hbox{End}(\gog_v)$.
Let $d$ and $r$ be integers so that so that the Sobolev norms $M := \cS_{d}$ and $N := \cS_{d+r}$ on $\cA^U$ satisfy
\begin{align*}
\|\phi\|_\infty &\ll_{d,U} M(\phi)\ll_{d,U,r} N(\phi),  \\
\|\phi-g\cdot \phi\|_\infty &\ll_{d,U,r} \rho_{G_v}(g,e_{G_v}) \, N(\phi)\quad \hbox{for all $g \in G_v$ with $v\in S_\infty$,} \\
\|\phi-g\cdot \phi\|_\infty &\ll_{d,U,r} \|\Ad(g)-id\| \, N(\phi)\quad \hbox{for all $g \in G_v$ with $v\in S_f$,} \\
N(g \cdot \phi) &\ll_{d,U,r} \|g\|_{\rm op}^{\sigma}\, N(\phi)\quad\hbox{for all $g\in G$ and some $\sigma =\sigma(d,r)> 0$},\\
M(\phi_1 \cdot \phi_2) &\ll_{d,U,r} N(\phi_1) N(\phi_2).
\end{align*}
Using these estimates, we establish the following properties of the flows $h_I$ and $h_J$,
which verify the assumptions of Proposition \ref{mainest} for these flows:
\begin{enumerate}
	\item[1.] 
	There exist $A \geq 1$, depending only on $d$, $U$, $r$ and $k$, and $a > 0$,
	depending only on $d$, $r$ and $k$, such that for all $t \in \mathbb{K}$ and $\phi_I \in \cA_I^U$,
	$$
	N_I(h_I(t) \cdot \phi_I) \leq A \max(1,|t|)^a \, N_I(\phi_I).
	$$

	\item[2.] There exists $B \geq 1$, depending only on $d$, $U$, $r$  and $k$, such that for all $t\in \mathbb{K}$ and $\phi_I \in \cA_I^U$,
	$$
	|m_I((h_I(t) \cdot \phi_I) \phi_I) - m_I(\phi_I)^2| \leq B \max(1,w_p |t|)^{-\delta_2} \, N_I(\phi_I)^2.
	$$

	\item[3.] 
	There exists $C \geq 1$, depending only on $d$, $U$, $r$ and $k$, such that for all $t\in\mathbb{K}$ satisfying $|t|\le w_{p+1}^{-1}$ and $\phi_J \in \cA_J^U$,
	$$
	\|h_J(t) \cdot \phi_J - \phi_J\|_\infty \leq C\, w_{p+1} |t|  \, N_J(\phi_J).
	$$
\end{enumerate}

The proof of Properties 1--3 is essentially the same as the proof
 Lemmas \ref{lemma1}, \ref{lemma2}, and \ref{lemma3}, so that 
 we omit the details. We only comment on the proof of the last property when $\bK$ is non-Archemedian. In this case we can argue as in  the proof of Lemma \ref{lemma3},
 and it remains to estimate
 $$
 \|\phi-\exp(tZ_l)\cdot \phi\|_\infty \ll_{d,U,r} \|\Ad(\exp(tZ_l))-id\| \, N(\phi)
 $$
 for $p+1\le l\le k$. Since 
 $$
 \Ad(\exp(tZ_l))=\exp(t\Ad(Z_l))= \sum_{i=0}^{\dim(\gog_v)} \frac{(t\Ad(Z_l))^i}{i!},
 $$
 and 
 $$
 \|t\Ad(Z_l)\|\ll \|t Z_l\|_v \le |t|\,\|Z_{p+1}\|_v=|t| w_{p+1}\le 1,
 $$
 it follows that
 $$
 \|\exp(tZ_l)-id\|\ll w_{p+1} |t|.
 $$
 This implies Property 3.
 \\
 
Next, since the Properties 1--3 hold, we can apply Proposition \ref{mainest} (proved in Section \ref{PrfABCDT}) to deduce that
$$
	\dist_{N_{[k]}}(\eta,m_{[k]}) \ll_{d,U,r,k} \max((\mathcal{M}T^a)^{1/2},(w_p T)^{-\delta_2/2},w_{p+1} T),
$$
	for all $T \in [w_p^{-1}, w_{p+1}^{-1}]$, where
	\[
	\mathcal{M} := \max(\dist_{M_I}(\eta_I,m_I),\dist_{M_J}(\eta_J,m_J)).
	\]
	
In order to finish the proof of Theorem \ref{main1_Sarithmetic},
we need to solve first the following problem: given $Q \geq 1$, we wish to ``minimize'' (over $p$ and $T$ for
which $T\in [w_p^{-1},w_{p+1}^{-1}]$) the 
expression
\[
\max((\mathcal{M}T^a)^{1/2}, (w_p T)^{-\delta_2/2}, w_{p+1} T),
\]
where $w_1,\ldots,w_k$ is a fixed sequence which satisfies
\[
1 = w_1 \geq w_2 \geq \ldots \geq w_k\quad\hbox{and}\quad w_k\le c_0\,Q^{-1}.
\]
We outline below one way to do this, under the assumptions
that $Q$ is not ``too small'' while
$\mathcal{M}$ is ``small''.
To make the notions ``large'' and ``small'' more precise, we fix a parameter $q \geq 1$, and constants $F \geq 1$ and $\eps, \tau > 0$ 
such that
\[
Q \geq q^{\eps} \qand \mathcal{M} \leq F\, q^{-\tau}
\]
(cf. \eqref{defQ_1} and \eqref{k-1ass_1}).
The problem now takes the following form. We wish to bound from above (for some appropriate choices of 
$p$ and $T$ such that $T\in [w_p^{-1}, w_{p+1}^{-1}]$) the expression
\begin{equation}
	\label{maxexp_1}
	\sqrt{F} \max(q^{-\tau/2} T^{a/2}, (w_p T)^{-\delta_2/2}, w_{p+1} T),
\end{equation}
where $w_1,\ldots,w_k$ is a sequence which satisfies
\[
1 = w_1 \geq w_2 \geq \ldots \geq w_k\quad \hbox{and}\quad w_k\leq c_0\,q^{-\eps}.
\]\\

Let us consider a collection points $\theta^i$, $0\le i\le k-1$,
with $\theta\in [w_k^{1/k},1]$.
Since all of these points lie between $w_k$ and $w_1$,
by the Pigeonhole Principle, there exist two consecutive points 
$\theta^{i+1}$ and $\theta^i$ for some $i=0,\ldots, k-2$ that will end up in one and the same 
of the $(k-1)$ intervals
\[
[w_k,w_{k-1}), \ldots, [w_3,w_2), [w_2,w_1].
\]
We fix an index $1 \leq p < k$ for which
\begin{equation}
\label{eq:b0}
w_{p+1} \leq \theta^{i+1}<\theta^i \leq w_p.
\end{equation}
Let $T = \theta^{-i-1/2}$. Then 
\[
w_p T = w_p \theta^{-i-1/2}  \geq \theta^i  \theta^{-i-1/2}  = \theta^{-1/2} \geq 1,
\]
and 
\[
w_{p+1} T \leq \theta^{i+1}  \theta^{-i-1/2}  = \theta^{1/2} \leq 1,
\]
Using \eqref{eq:b0}, we deduce that  \eqref{maxexp_1} is bounded from above by
\begin{equation}
\label{eq:F}
\sqrt{F} \max(q^{-\tau/2} \theta^{-a(k-3/2)/2}, \theta^{\delta_2/4}, \theta^{1/2}).
\end{equation}
We take
$$
\theta=\max\left(w_k^{1/k}, q^{-\tau/a(2k-3)}\right).
$$
Since $w_k \le c_0\,q^{-\eps}$, we deduce that \eqref{eq:F}
satisfies
$$
\ll \sqrt{F} q^{-\gamma_k \min(\eps,\tau) },
$$
where $\gamma_k$ depends only on $k, a$ and $\delta_2$.
More precisely, $\gamma_k\gg 1/k$.\\

 We have shown that if $q \geq 1$ is fixed, $F\ge 1$, and $\tau, \eps > 0$ are constants such that
\[
Q \geq q^\eps  \qand \max(\dist_{M_I}(\eta_I,m_I),\dist_{M_J}(\eta_J,m_J)) \leq F q^{-\tau},
\]
then 
$$
\dist_{N_{[k]}}(\eta,m_{[k]}) \ll_{d,U,r,k} \sqrt{F}\,q^{-\gamma_k \min(\eps,\tau)}.
$$
 This finishes the proof of Theorem \ref{main1_real} modulo the proof of Proposition \ref{mainest} that we will prove in Section \ref{PrfABCDT}.

\section{Higher-order correlations for adele groups}
\label{outline_main1_adele}

Let $\bG\subset \hbox{GL}_n$ be a simply connected absolute simple algebraic group defined over a number field $F$. We denote by $\mathcal{V}_F$ the set of places of $F$. 
For $v\in \mathcal{V}_F$, let $F_v$ be the corresponding completion of $F$. 
For non-Archemedian places $v$, we also denote by $O_v=\{x\in F_v:\, |x|_v\le 1\}$
the ring of integers in $F_v$. Then the adele group
$$
\bG(\bA_F):={\prod_{v\in\mathcal{V}_F}}^{\!\!\!\prime}\, \bG(F_v)
$$
is the restricted direct product with respect to the family of compact open subgroups $\bG(O_v)$. We set
$$
G_\infty:={\prod_{v\in\mathcal{V}^\infty_F}} \bG(F_v)
\quad \hbox{and}\quad 
G_f:={\prod_{v\in\mathcal{V}^f_F}}^{\!\!\!\prime} \,\bG(F_v),
$$
where $\mathcal{V}^\infty_F$ and $\mathcal{V}^f_F$ denote the subsets of Archemedian places
and non-Archemedian places respectively.
We also denote by $U_\infty$ the subgroup of $G_\infty$ consisting of compact factors.
The group  of rational points 
$$
\Gamma:=\bG(F)
$$
embeds in $\bG(\bA_F)$ diagonally as a discrete subgroup with finite covolume.
We will be interested in the action of $\bG(\bA_F)$ on the homogeneous space
$$
X:=\Gamma\backslash \bG(\bA_F)
$$
equipped with the normalised invariant measure $m$.

Given a compact open subgroup $W$ of $G_f$, we denote by $\cC_c^\infty(X)^W$
the algebra of compactly supported functions on $X$ which are smooth with respect to the action of $G_\infty$ and are $W$-invariant. 
Now we introduce a collection of Sobolev norms $\cS_{d,W}$ on $\cC_c^\infty(X)^W$. 
Let us choose a finite collection $S$ of places
which contains all Archemedean places such that the group
$$
G:=\prod_{v\in S} \bG(F_v)
$$
is not compact.
We also set 
$$
D:={\prod_{v\in\mathcal{V}_F\backslash S}}^{\!\!\!\!\!\!\prime}\, \bG(F_v),
$$
so that $\bG(\bA_F)=G\times D$.
We suppose that the compact open subgroup $W$ is of the form 
$U\times V$ where $U$ is a compact open subgroup of $\prod_{v\in S\cap \mathcal{V}^f_F} \bG(F_v)$, and $V$ is a compact open subgroup of $D$.
Let 
$$
\Gamma_V := \Gamma \cap ({G}\times V).
$$
It will be also convenient to consider $\Gamma_V$ as a subgroup of $G$
by identifying it with the corresponding projection.
It follows from the Strong Approximation Theorem \cite[\S7.4]{PR}
that the projection of $\Gamma$ to $D$ is dense.
Using this, one can check  that the map 
\begin{equation}
\label{map}
\Gamma_V \backslash G \ra \Gamma \backslash (G \times D)/V: \; \Gamma_V g \mapsto \Gamma(g,e_D)V
\end{equation}
is a $G$-equivariant homeomorphism. 
In particular, $\Gamma_V$ is a lattice in $G$.
We set
\[
X_V := \Gamma_V \backslash G,
\]
and denote by $m_V$ the invariant probability measure on $X_V$.
Using that \eqref{map} is a homeomorphism,
we see that the map $\cC_c(X_V)\ra\cC_c(X)^V$ given by $\phi \mapsto F_\phi$, where
\begin{equation}
\label{mapf}
F_\phi(\Gamma(g,e_D))=\phi(\Gamma_V g), \quad\quad \textrm{for $\Gamma_V g \in X_V$},
\end{equation}
is a well-defined isomorphism, and
\[
\int_X F_\phi \, dm = \int_{X_V} \phi \, dm_V, \quad \quad \textrm{for all $\phi \in\cC_c(X_V)$}.
\]
This map also induces the isomorphism $\cC^\infty_c(X)^W \cong \cC^\infty_c(X_V)^U$.
Using this identification, we introduce Sobolev norms on $\cC^\infty_c(X)^U$.
For an integer $d$, we define the \emph{Sobolev norm} $\cS_{d,W}$ on $\cC^\infty_c(X)^W$ of 
order $d$ and level $W$, by
\begin{equation}\label{eq:SSS}
\cS_{d,W}(F_\phi) := \cS_d(\phi), \quad \textrm{for $\phi \in \cC^\infty_c(X_V)^U$},
\end{equation}
where $\cS_d$ is the Sobolev norm on the $S$-algebraic homogeneous space
as in Section \ref{sec:salg}.


\subsection{A reformulation of Theorem \ref{main2}}

Let $S$ be the subset of $\mathcal{V}_F^\infty$ 
consisting of $v$ such that $\bG(F_v)$ is not compact and $R=\mathcal{V}_F^\infty\backslash S$.
According to our assumption on $\bG$, $S\ne \emptyset$.
We set 
$$
G:={\prod_{v\in S}} \bG(F_v)\quad\hbox{and}\quad
U_\infty :={\prod_{v\in R}} \bG(F_v).
$$
When $R=\emptyset$, then $G_\infty$ has no compact factors, and we set $U_\infty=1$.
We observe that for a compact open subgroup $W$ of $G_f$, we have 
$$
C_c^\infty(X)^{U_\infty W}\cong \cC^\infty_c(X/U_\infty)^W\quad
\hbox{and}\quad X/U_\infty\cong \Gamma\backslash
\bG(\bA_F^R),
$$
where
$$
\bG(\bA_F^R)=:
{\prod_{v\in\mathcal{V}_F\backslash R}}^{\!\!\!\!\!\!\prime}\, \bG(F_v),
$$
and $\Gamma$ is identified with its projection to $\bG(\bA_F^R)$.
Hence, it sufficient to prove Theorem \ref{main2} for functions 
$\phi_1,\ldots,\phi_k\in \cC^\infty_c(\Gamma\backslash \bG(\bA_F^R))^W$
and $s_1,\ldots,s_k\in \bG(\bA_F^R)$.
We also set
$$
D:={\prod_{v\in\mathcal{V}_F\backslash \mathcal{V}^\infty_F}}^{\!\!\!\!\!\!\!\!\!\prime}\, \bG(F_v),
$$
so that $\bG(\bA^R_F)=G\times D$.
We note that $\Gamma=\bG(F)$ is embedded  diagonally in $G\times D$
as a lattice. 
It follows from the Strong and Weak Approximation Theorems \cite[Ch.~7]{PR},
the projections of $\Gamma$ to $G$ and $D$, as well as each of the simple factors $G_v$, $v\in S$,
of $G$ are dense. From now on we set
$$
X:=\Gamma\backslash (G\times D).
$$
With these notations, we still have the identifications \eqref{map} and \eqref{mapf}
with $V=W$.

We retain the notation introduced in Sections \ref{outline_main1_0} and \ref{sec:salg}, and take 
\begin{equation}
\label{eq:gd}
g_{[k]} = (g_1,\ldots,g_k) \in G_{[k]} \qand d_{[k]} = (d_1,\ldots,d_k) \in D_{[k]}.
\end{equation}
Since the projection of $\Gamma$ to $D$ is dense in $D$, 
we can find $\gamma_i \in \Gamma$ 
such that 
\begin{equation}
\label{eq:ggg}
d_i \in  \gamma_i W \quad\hbox{ for every $i = 1,\ldots,k$.}
\end{equation}
We set
$\gamma_{[k]} = (\gamma_1,\ldots,\gamma_k) \in \Gamma^k.$
Given ${\phi}_1,\ldots,{\phi}_k \in\cC_c(X_W)$, we consider the corresponding
functions $F_1 =F_{{\phi}_1},\ldots,F_k=F_{{\phi}_k} \in\cC_c(X)^W$
defined via the isomorphism \eqref{mapf} with $V=W$.
We obtain
\begin{align*}
&\big((g_{[k]},d_{[k]}) ^{-1}\cdot m_{\Delta_{[k]}(X)}\big)({F}_1 \otimes \cdots \otimes {F}_k) \\
= &
\int_X F_1(\Gamma(gg_1,dd_1)) \cdots F_k(\Gamma(gg_k,dd_k))
\,
dm(\Gamma(g,d)) \\
= &
\int_X F_1(\Gamma(gg_1,d\gamma_1)) \cdots F_k(\Gamma(gg_k,d\gamma_k))
\,
dm(\Gamma(g,d)) \\
=&
\int_X \left( \int_W F_1(\Gamma(gg_1,dw\gamma_1)) \cdots F_k(\Gamma(gg_k,dw \gamma_k)) \, d\nu_W(w) \right)
dm(\Gamma(g,d)),
\end{align*}
where $\nu_W$ denote the normalised invariant measure on the compact subgroup $W$.
If we define
\[
F(\Gamma(g,d)) := \int_W F_1(\Gamma(gg_1,dw\gamma_1)) \cdots F_k(\Gamma(gg_k,dw \gamma_k)) \, d\nu_W(w),
\]
then clearly $F$ belongs to $\cC_c(X)^W$, and thus
\[
\int_X F \, dm = \int_{X_W} F(\Gamma(g,e_D)) \, dm_W(\Gamma_W g).
\]
On the other hand, since the integrand in the definition of $F$, viewed as a function on the group $W$, is invariant under the open in $D$ subgroup 
$$
W' := \bigcap_{i} \gamma_iW\gamma_i^{-1},
$$
we see that
\begin{eqnarray*}
F(\Gamma(g,e_D)) &=& \frac{1}{|W/W'|} \sum_{w \in W/W'} 
F_1(\Gamma(gg_1,w\gamma_1)) \cdots F_k(\Gamma(gg_k,w \gamma_k)).
\end{eqnarray*}
Since the projection of $\Gamma$ to $D$ is dense, we have $W/W'=\Gamma_{W}/\Gamma_{W'}$, so that
\begin{eqnarray*}
	F(\Gamma(g,e_D)) &=& \frac{1}{|\Gamma_{W}/\Gamma_{W'}|} \sum_{\delta\in \Gamma_{W}/ \Gamma_{W'}} 
	F_1(\Gamma(gg_1,\delta\gamma_1)) \cdots F_k(\Gamma(gg_k,\delta \gamma_k)) 
	\\
	&=& \frac{1}{|\Gamma_{W}/\Gamma_{W'}|} \sum_{\delta\in \Gamma_{W}/ \Gamma_{W'}} 
	F_1(\Gamma(\gamma_1^{-1} \delta^{-1} gg_1,e_D)) \cdots F_k(\Gamma(\gamma_k^{-1} \delta^{-1} gg_k,e_D))
	\\
	&=&
	\frac{1}{|\Gamma_{W}/\Gamma_{W'}|} \sum_{\delta\in \Gamma_{W}/ \Gamma_{W'}} 
	\phi_1(\Gamma_W \gamma_1^{-1} \delta^{-1} gg_1) \cdots \phi_k(\Gamma_W \gamma_k^{-1} \delta^{-1} gg_k).
\end{eqnarray*}
Then 
\begin{eqnarray*}
\int_X F \, dm
&=& 
\int_{X_W} \left( \frac{1}{|\Gamma_{W}/ \Gamma_{W'}|} \sum_{\delta \in \Gamma_{W}/\Gamma_{W'}} 
\phi_1(\Gamma_W \gamma_1^{-1} \delta^{-1} gg_1) \cdots \phi_k(\Gamma_W \gamma_k^{-1} \delta^{-1} gg_k)\right) \, dm_W(\Gamma_W g) \\
&=& 
\int_{X_{W'}}  
\phi_1(\Gamma_W \gamma_1^{-1} gg_1) \cdots \phi_k(\Gamma_W \gamma_k^{-1} gg_k) \, dm_{W'}(\Gamma_W g) \\
&=&
(g_{[k]}^{-1} \cdot \xi_{\gamma_{[k]}})(\phi_1 \otimes \cdots \otimes\phi_k),
\end{eqnarray*}
where $\xi_{\gamma_{[k]}}$ denotes the invariant probability measure 
supported on the  closed $\Delta_{[k]}(G)$-orbit 
\[
\Gamma_{W}^k \gamma_{[k]}^{-1} \Delta_{[k]}(G) \cong \Gamma_{W'} \backslash G
\] 
in $(X_W)_{[k]}$. Hence, we conclude that for all ${\phi}_1,\ldots,{\phi}_k \in\cC_c(X_W)$,
$$
\big((g_{[k]},d_{[k]})^{-1} \cdot m_{\Delta_{[k]}(X)}\big)(F_{{\phi}_1} \otimes \cdots \otimes F_{{\phi}_k}) 
=(g_{[k]}^{-1} \cdot \xi_{\gamma_{[k]}})(\phi_1 \otimes \cdots \otimes\phi_k).
$$
Using the norms $\cS_{d,W}$ defined by \eqref{eq:SSS} with $V=W$ on the algebra $\cC_c^\infty(X)^{W}$,
we introduce the Wasserstein distance $\dist_{d,W,{[k]}}$
on $\mathcal{P}(X_{[k]})$ as in \eqref{defW}.
Then the proof of Theorem \ref{main2} reduces to estimating the distance
$\dist_{d,W,{[k]}}((g_{[k]},d_{[k]})^{-1} \cdot m_{\Delta_{[k]}(X)},m_{[k]})$.
We also introduce the Wasserstein distance $\dist_{d,{[k]}}$
on $\mathcal{P}((X_W)_{[k]})$ defined by the Sobolev norms $\cS_d$ on 
the algebra $\cC_c^\infty(X_W)$.
Then for every positive integer $d$, 
\begin{equation}
\label{niceeq}
\dist_{d,W,{[k]}}((g_{[k]},d_{[k]})^{-1} \cdot m_{\Delta_{[k]}(X)},m_{[k]})
=
\dist_{d,[k]}(g_{[k]}^{-1} \cdot \xi_{\gamma_{[k]}},(m_W)_{[k]}),
\end{equation}
for all $(g_{[k]},d_{[k]}) \in G_{[k]} \times D_{[k]}$,
where $\gamma_{[k]}$ is determined by \eqref{eq:ggg}.
We note that $\xi_{\gamma_{[k]}}$ is obviously a $\Delta_{[k]}(G)$-invariant $k$-coupling of $(X_W,m_W)$, so that we can analyse $\xi_{\gamma_{[k]}}$ using the method of Section \ref{sec:salg}. \\

We recall that the height function $\hbox{H}:\bG(\bA_F)\to\bR^+$ is defined in \eqref{eq:HHH}
in terms of the norms $\|\cdot \|_v$ on $\hbox{M}_n(F_v)$.
We note that $\|\cdot\|_v$ is
invariant under $\bG(O_v)$ for almost all $v$.
Given $g_{[k]}$ and $d_{[k]}$ as in \eqref{eq:gd}, we set
\begin{equation}
\label{defQqH}
Q := \max_{i \neq j} \|g_i^{-1} g_j\|_{\rm op}, \quad\quad q_G := \min_{i \neq j} \|g_i^{-1} g_j\|_{\rm op},\quad \quad q_D := \min_{i\ne j}\hbox{H}(d_i^{-1} d_j),
\end{equation}
where $\|\cdot\|_{\rm op}$ is is the sub-multiplicative function on $G$ defined in \eqref{eq:norm00}.
We also set
\begin{equation}
\label{defq2}
q := \min_{i \neq j} \max(\|g_i^{-1} g_j\|_{\rm op}, \hbox{H}(d_i^{-1} d_j)).
\end{equation}
We wish to show that for every large enough integer $d$, there 
exists $\delta=\delta(k,d) > 0$ such that 
\[
\dist_{d,[k]}(g_{[k]}^{-1} \cdot \xi_{\gamma_{[k]}},(m_W)_{[k]}) 
\ll_{d,W,k} q^{-\delta}.
\]
The proof
will separate between the cases when $Q$ is ``large'' in comparison to $q$ and when $Q$ is ``small''
in comparison to $q$.
To make all of this precise, let us fix $\eps > 0$,
and consider the cases when 
$$
Q\ge q^\eps\quad\hbox{ and }\quad Q<q^\eps.
$$

\subsection{Case I: $Q \geq q^{\eps}$}
We shall apply Theorem \ref{main1_padic} to the $\Delta_{[k]}(G)$-invariant $k$-coupling $\xi = \xi_{\gamma_{[k]}}$
of $(X_W,m_W)$. We note that when $I \subset [k]$ is a singleton, 
the assumption \eqref{k-1ass_1} of Theorem \ref{main1_padic}
is clearly satisfied.
Assume now that we have shown that for sufficiently large $d$, there exist $\delta_{k-1} > 0$ such that
\[
\dist_{d,[k]}(g_I^{-1} \cdot \xi, (m_W)_I) \ll_{d,k} q^{-\delta_{k-1}}, \quad \textrm{for all $I \subsetneq [k]$}.
\]
Since $Q \geq q^\eps$, we conclude applying Theorem \ref{main1_padic} inductively that there exists $\gamma_k > 0$ such that for sufficiently large $d$,
\begin{equation}
\label{step2}
\dist_{d,[k]}(g_{[k]}^{-1} \cdot \xi,(m_W)_{[k]}) \ll_{d,k} q^{-\gamma_k \min(\delta_{k-1},\eps)}.
\end{equation}

\subsection{Case II: $Q < q^\eps$}
Let us now deal with the trickier case when $Q$ is ``small'' in comparison to $q$. A straightforward application
of the property \eqref{a3} for the Sobolev norm $\cS_d$ and its projective tensor products
(cf. Lemma \ref{lemma:tensorprodpreceq}) shows that 
there exists $\sigma =\sigma(d,k)> 0$ such that for all $g_{[k]} \in G_{[k]}$,
\begin{equation}
\label{step1}
\dist_{d, k}(g_{[k]}^{-1} \cdot \xi,(m_W)_{[k]}) \ll_{d,k} Q^{\sigma} \dist_{d, k}(\xi,(m_W)_{[k]}).
\end{equation}

We shall now show how one can estimate the right-hand side in 
\eqref{step1} by utilising a general result by the second author, Margulis and Venkatesh  \cite[Theorem~1.3]{EMV}
which we apply to the measure
 $\xi$. We recall that $\xi$ denotes the normalized invariant measure supported on the closed $\Delta_{[k]}(G)$-orbit 
\[
Y(\gamma_{[k]}) := \Gamma_W^k \gamma_{[k]}^{-1} \Delta_{[k]}(G) \subset \Gamma_W^k \backslash G_{[k]} = (X_W)_{[k]}.
\]
We note that using the restriction of scalars functor,
we can consider $X_W$ as a homogeneous space 
of a real algebraic group defined over $\bQ$.
Since $\bG$ is simply connected and isotropic over $F_v$ for $v\in S$,
the group $\Delta_{[k]}(G)\cong G=\prod_{v\in S} \bG(F_v)$ is generated by
unipotent one-parameter subgroups.
Also the centraliser of $\Delta_{[k]}(G)$ in $G_{[k]}$ is finite.
Hence, the results of \cite{EMV} are applicable.
We observe that
$$
Y(\gamma_{[k]}) \cong \Gamma_{W}' \backslash G,
$$
where 
\[
\Gamma_{W}'=\bigcap_i \gamma_i \Gamma_W \gamma_i^{-1}.
\]
 If the volume of $X_W$ is normalized to be one, we see that the volume of the orbit $Y(\gamma_{[k]})$ equals 
the index
\[
\left|\Gamma_W :  \Gamma_W \cap \Big( \bigcap_{i \neq j} \gamma_j^{-1} \gamma_i \Gamma_W \gamma_i^{-1} \gamma_j\Big)\right|
\]
for every fixed $j$.
For $\gamma\in \Gamma$, we define
$$
\Omega_W(\gamma):=|\Gamma_W : \Gamma_W\cap\gamma \Gamma_W \gamma^{-1}|,
$$
and for $\gamma_{[k]} = (\gamma_1,\ldots,\gamma_k) \in \Gamma^k$,
$$
\Omega_W(\gamma_{[k]}):=\min_{i \neq j} \Omega_W(\gamma_j^{-1} \gamma_i).
$$
Then 
$$
\hbox{vol}\left(Y(\gamma_{[k]})\right)\ge \Omega_W(\gamma_{[k]}).
$$

In order to apply \cite[Theorem~1.3]{EMV}, we need to describe orbits $\Gamma_W^k \gamma_{[k]}^{-1} L$
in $X_W^k$ that support a finite invariant measure, where $L$ is a closed subgroup of $G^k$
such that $\Delta_{[k]}(G)\subset L\subset G^k$.
For a partition $\mathcal{P}$ of $[k]$, we set
$$
\Delta_{\mathcal{P}}(G):=\prod_{I\in \mathcal{P}} \Delta_I(G).
$$
Then by Lemma \ref{l:subgroup} proved below, every such orbit is of the form
\begin{equation}\label{eq:orb}
\Gamma_W^k \gamma_{[k]}^{-1} L=\Gamma_W^k \gamma_{[k]}^{-1} \Delta_{\mathcal{P}}(G)Z
\end{equation}
for some partition $\mathcal{P}$ of $[k]$ and a finite subgroup $Z$ of $Z(G)^k$.
We note that then the orbit 
$$
\Gamma_W^{k}\gamma_{[k]}^{-1}\Delta_{\mathcal{P}}(G)Z \subset X_{[k]}
$$ is 
again closed. We observe that
$$
\Gamma_W^{k}\gamma_{[k]}^{-1}\Delta_{\mathcal{P}}(G) \cong \Gamma'_{W,\cP} \backslash G^\cP,
$$ 
where 
\[
\Gamma'_{W,\cP} 
=\prod_{I\in \mathcal{P}} \left(\bigcap_{i\in I} \gamma_i \Gamma_W \gamma_i^{-1}\right).
\]
Hence, if the partition $\mathcal{P}$ is proper,
\begin{equation}\label{eq:v}
\Vol(\Gamma_W^{k}\gamma_{[k]}^{-1}\Delta_{\mathcal{P}}(G)Z) \gg \min_{i \neq j} |\Gamma_W :  \Gamma_W \cap \gamma_j^{-1} \gamma_i \Gamma_W \gamma_i^{-1} \gamma_j| = \Omega_W(\gamma_{[k]}).
\end{equation}
Now we apply \cite[Theorem~1.3]{EMV}. Let us assume that the parameter $\Omega_W(\gamma_{[k]})$ is sufficiently large, so that
it follows from \eqref{eq:v} that $(X_W)_{[k]}=\Gamma_W^{k}\backslash G_{[k]}$
is the only orbit with volume less than $\Omega_W(\gamma_{[k]})^{1/2}$.
Then, by \cite[Theorem~1.3]{EMV}, there exists $\delta_D > 0$ such that
for sufficiently large integers $d$,
\begin{equation}\label{eq:d1}
\dist_{d,[k]}(\xi,(m_W)_{[k]}) \le \hbox{vol}\left(Y(\gamma_{[k]})\right)^{-\delta_D/2} \le  \Omega_W(\gamma_{[k]})^{-\delta_D/2}.
\end{equation}
Although this bound holds only when $\Omega_W(\gamma_{[k]})$
is sufficiently large, since $\dist_{d,[k]}\ll_{d,k} 1$ (cf. Lemma \ref{lemma:tensorprodpreceq}),
we also have 
\[
\dist_{d,[k]}(\xi,(m_W)_{[k]}) \ll_{d,k}  \Omega_W(\gamma_{[k]})^{-\delta_D/2}
\]
in general. By Lemma \ref{l:volume} proved below,
\begin{equation}
\label{eq:omega}
\Omega_W(\gamma_{[k]})\gg_W \min_{i\ne j}\hbox{H}(\gamma_i^{-1}\gamma_j)^\theta .
\end{equation}
We recall that $\hbox{H}$ is defined as the product of the norms $\|\cdot\|_v$
which are bi-$\bG(O_v)$-invariant for almost all $v$, so that
for all $w_1,w_2\in W$ and $d\in D$,
$$
\hbox{H}(w_1dw_2)\gg_W \hbox{H}(d).
$$
Hence, it follows from \eqref{eq:ggg} that
$$
\min_{i\ne j}\hbox{H}(\gamma_i^{-1}\gamma_j) \gg_W  \min_{i\ne j} \hbox{H}(d_i^{-1}d_j) =q_D.
$$
Hence, we deduce from \eqref{eq:d1} that 
\begin{equation}
\label{effcongmix1}
\dist_{d,[k]}(\xi,(m_W)_{[k]}) \ll_{d,W,k} q_D^{-\delta_D \theta/2}.
\end{equation}

\subsection{Combining the two cases}
Now combine the estimates \eqref{step2}, \eqref{step1}, and \eqref{effcongmix1} to complete the proof of Theorem \ref{main2}.
We stress that $\eps > 0$ so far has been a free variable. However, we note that as long as $\eps < 1$, 
then the inequality $Q < q^\eps$ (Case II) implies that $q = q_D$. Indeed, if 
\[
Q < q^\eps \qand q > q_D,
\] 
then the latter inequality readily implies that there exists at least one pair $(i,j)$ of indices with $i \neq j$ such that 
$\|g_i^{-1} g_j\|_{\rm op} > \hbox{H}(d_i^{-1} d_j)$, and thus $Q \geq q$, which contradicts the first inequality if $\eps < 1$.
Hence, as long as $\eps < 1$ and $Q < q^\eps$, \eqref{step1} and \eqref{effcongmix1} together imply that
\[
\dist_{d,[k]}(g_{[k]}^{-1} \cdot \xi,(m_W)_{[k]}) \ll_{d,W,k} q^{\eps\sigma} q^{-\delta_D \theta/2} = q^{-(\delta_D \theta/2 - \eps \sigma)}.
\]
On the other hand, if $Q \geq q^\eps$, then \eqref{step2} asserts that
\[
\dist_{d,[k]}(g_{[k]}^{-1} \cdot \xi,(m_W)_{[k]}) \ll_{d,k} q^{-\gamma_k \min(\delta_{k-1},\eps)}.
\]
Let us now choose 
\[
\eps = \min\left\{\frac{1}{2},\frac{\delta_D\theta}{4 \sigma}\right\},
\]
so that if $Q < q^\eps$, then
\[
\dist_{d,[k]}(g_{[k]}^{-1} \cdot \xi,(m_W)_{[k]}) \ll_{d,W,k}q^{-\delta_D \theta/4},
\]
and if $Q \geq q^\eps$, then
\[
\dist_{d,[k]}(g_{[k]}^{-1} \cdot \xi,(m_W)_{[k]}) \ll_{d,k} q^{-\gamma_k \min(\delta_{k-1},\frac{\delta_D\theta}{4 \sigma})}.
\]
If we denote by $\delta_k$ the minimum of the two exponents above, then 
\[
\dist_{d,[k]}(g_{[k]}^{-1} \cdot \xi,(m_W)_{[k]}) \ll_{d,W,k} q^{-\delta_{k}}.
\]
Finally, we observe that by Lemma \ref{lemma1_outline_main1_padic}(iii), there exists $c\in (0,1)$ such that
$$
\|g_i^{-1}g_j\|_{\rm op}\gg \hbox{H}(g_i^{-1} g_j)^c
$$
for all $i,j$.
Since $\hbox{H}\gg 1$ on $G$ and on $D$, and $\hbox{H}((g,d))=\hbox{H}(g)\hbox{H}(d)$
for all $g\in G$ and $d\in D$,
we deduce that
$$
\max(\|g_i^{-1} g_j\|_{\rm op}, \hbox{H}(d_i^{-1} d_j))\gg 
\max(\hbox{H}(g_i^{-1} g_j)^c, \hbox{H}(d_i^{-1} d_j))
\gg \hbox{H}((g_i,d_i)^{-1}(g_j,d_j))^{c/2}.
$$
Hence, we obtain
\[
\dist_{d,[k]}(g_{[k]}^{-1} \cdot \xi,(m_W)_{[k]}) \ll_{d,W,k} \mathfrak{H}((g_{[k]},d_{[k]}))^{-\delta_{k}c/2}.
\]
Because of \eqref{niceeq}, this finishes the proof of Theorem \ref{main2}, modulo Lemmas \ref{l:subgroup} and \ref{l:volume}.

\subsection{Intermediate subgroups}

We prove the description of the intermediate orbits stated in \eqref{eq:orb}.

\begin{lemma}
	\label{l:subgroup}
	Let $\Gamma_1,\ldots,\Gamma_k$ be irreducible lattices in $G$ and
	$X_{[k]}=X_1\times \cdots \times X_k$ where $X_i=\Gamma_i\backslash G$.
	Suppose that $L$ is an immersed subgroup of $G^k$ containing 
	the diagonal $\Delta_{[k]}(G)$
	such that for some $x_{[k]}=(x_1,\ldots,x_k)\in X_{[k]}$, 
	the orbit 
	$x_{[k]} L$ in $X_{[k]}$ supports a finite invariant measure. Then
	$$
	 x_{[k]} L=x_{[k]}\Delta_{\mathcal{P}}(G)Z
	$$
	for some partition 	$\mathcal{P}$ of $[k]$ and a finite subgroup $Z$ of the centre $Z(G)^{k}$.
\end{lemma}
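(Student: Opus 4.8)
\emph{Strategy.} The plan is to describe $\mathrm{Lie}(L)$ by a Goursat‑type argument and then to use the irreducibility of the lattices $\Gamma_i$ to rule out all but the expected possibilities. Throughout I identify each copy of $\gog=\mathrm{Lie}(G)=\bigoplus_{v\in S}\gog_v$ inside $\gog^{\oplus k}=\mathrm{Lie}(G^k)$, and I write $\Delta(\gog)$ for the diagonal copy.

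\emph{Step 1 (reductions).} Writing $x_{[k]}=\Gamma_{[k]}g_{[k]}$ and replacing each $\Gamma_i$ by the (again irreducible) lattice $g_i^{-1}\Gamma_i g_i$, we may assume $x_{[k]}=e_{[k]}$, so that $\Lambda:=\Gamma_{[k]}\cap L$ is a lattice in $L$ and $\Delta_{[k]}(G)\subseteq L$. First I would show that $L$ is semisimple with no compact factors: the solvable radical and the product of compact simple factors of $L$ are normalized by $\Delta_{[k]}(G)$, hence their Lie algebras are $\Ad(\gog)$-submodules of $\gog^{\oplus k}$ for the diagonal action, and since every simple constituent of this module is of non-compact type such a submodule must vanish. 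Let $H$ be the subgroup of $L$ generated by the unipotent one‑parameter subgroups of $G^k$ that it contains; since every $G_v$ is isotropic and simply connected, $G$ is generated by unipotents, so $\Delta_{[k]}(G)\subseteq H\trianglelefteq L$. Using that $\mathfrak h:=\mathrm{Lie}(H)$ is semisimple with trivial centralizer in $\gog^{\oplus k}$ (a centralizing element would centralize $\Delta(\gog)$, hence be $0$), that $Z_{G^k}(\Delta_{[k]}(G))=Z(G)^k$ is finite, and that $N_{G^k}(H)$ has Lie algebra $\mathfrak h$, one deduces that $H$ has finite index in $L$; in particular $x_{[k]}H$ is closed with finite volume and $\Lambda_H:=\Gamma_{[k]}\cap H$ is a lattice in $H$.

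\emph{Step 2 (Goursat).} Since $\mathfrak h\supseteq\Delta(\gog)$, it is a submodule for the diagonal $\gog$-action, and the adjoint modules $\gog_v$ are pairwise non‑isomorphic as $\gog$-modules, so the isotypic decomposition gives $\mathfrak h=\bigoplus_{v\in S}\gog_v\otimes W_v$ with $W_v\subseteq\mathbb K^k$ a subspace (identifying $\gog_v^{\oplus k}$ with $\gog_v\otimes\mathbb K^k$ for the appropriate scalars $\mathbb K$). A short computation shows $[\,\gog_v\otimes W_v,\gog_v\otimes W_v\,]=\gog_v\otimes(W_v\cdot W_v)$, where $W_v\cdot W_v$ denotes the span of coordinatewise products; hence $\mathfrak h$ is a subalgebra if and only if each $W_v$ is a subalgebra of $\mathbb K^k$ for coordinatewise multiplication, while $\mathfrak h\supseteq\Delta(\gog)$ forces $\mathbf 1\in W_v$. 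Unital subalgebras of $\mathbb K^k$ are precisely $W_v=\mathrm{span}\{\mathbf 1_I:I\in\mathcal P_v\}$ for partitions $\mathcal P_v$ of $[k]$, so, using that $H$ is generated by unipotents, $H=\prod_{v\in S}\prod_{I\in\mathcal P_v}\Delta_I(G_v)$.

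\emph{Step 3 (irreducibility; the main point), and conclusion.} It remains to prove that all $\mathcal P_v$ coincide. Let $\mathcal P:=\bigvee_v\mathcal P_v$ be their join. The lattice $\Lambda_H$ consists of tuples $(\gamma_i)\in\prod_i\Gamma_i$ with $\pi_v(\gamma_i)=\pi_v(\gamma_j)$ whenever $i,j$ lie in a common block of $\mathcal P_v$. By irreducibility, $\Gamma_i\cap\ker\pi_v=\Gamma_i\cap\prod_{w\ne v}G_w$ is finite, so each such relation determines $\gamma_i$ from $\gamma_j$ up to finitely many choices; chaining relations, all coordinates in one $\mathcal P$-block agree up to a finite set. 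Choosing a transversal $\{i(I):I\in\mathcal P\}$, the coordinate projection $\psi:H\to G^{\mathcal P}$, $(h_i)\mapsto(h_{i(I)})_I$, is a surjective homomorphism whose kernel is the product of those simple factors $\Delta_{I'}(G_v)$ with $I'$ containing no $i(I)$, and $\psi|_{\Lambda_H}$ has finite kernel with discrete image (contained in the discrete group $\prod_I\Gamma_{i(I)}$). If some $\mathcal P_v$ were strictly finer than $\mathcal P$, then $\ker\psi$ would be non‑compact, so the fibration $\Lambda_H\backslash H\to\psi(\Lambda_H)\backslash G^{\mathcal P}$ would have fibers of infinite volume, contradicting $\Vol(\Lambda_H\backslash H)<\infty$. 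Hence $\mathcal P_v=\mathcal P$ for all $v$ and $H=\Delta_{\mathcal P}(G)$. Finally $L$ normalizes $H=\Delta_{\mathcal P}(G)$, and a direct computation gives $N_{G^k}(\Delta_{\mathcal P}(G))=\Delta_{\mathcal P}(G)\cdot Z(G)^k$ (an element $(g_i)$ normalizes it iff $g_i^{-1}g_j\in Z(G)$ whenever $i,j$ lie in a common block of $\mathcal P$); since $Z(G)$ is finite this yields $L=\Delta_{\mathcal P}(G)\cdot Z$ for a finite subgroup $Z\subseteq Z(G)^k$, and therefore $x_{[k]}L=x_{[k]}\Delta_{\mathcal P}(G)Z$. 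The step I expect to be most delicate is Step 3 in the mixed $S$-arithmetic setting: making the volume estimate rigorous for the non‑Archimedean factors (working with the subgroup generated by unipotents in place of an identity component) and for lattices $\Gamma_i$ that need not be commensurable.
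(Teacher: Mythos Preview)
Your proof is correct and takes a genuinely different route from the paper's. The paper argues by induction on $k$: after killing the centre, it projects $L$ to $G^{k-1}\times G$ and performs a group-level Goursat dichotomy (either $L$ contains a nontrivial normal slice $\{e\}\times N$, in which case irreducibility of the lattices is used via an orbit-closure statement $\overline{x_{[k]}N}\supseteq x_{[k]}G_I$ to absorb a factor, or $L$ is the graph of a surjection $L_1\to G$, and one applies the inductive hypothesis to $L_1$). You instead classify $\mathrm{Lie}(L)$ in one stroke by the isotypic decomposition, obtaining the product form $\prod_{v}\prod_{I'\in\mathcal P_v}\Delta_{I'}(G_v)$ with a partition $\mathcal P_v$ for each place, and then use irreducibility of the $\Gamma_i$ in a different way --- not through orbit closures but through the fact that $\pi_v|_{\Gamma_i}$ has finite kernel --- to run a covolume argument forcing all $\mathcal P_v$ to coincide. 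Your approach is more structural and avoids induction; the paper's is more elementary in that it never leaves the group level and does not need the centroid/endomorphism-ring computation implicit in your isotypic decomposition.

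Two remarks on details. First, your Step~1 claim that $H$ has finite index in $L$ is not needed up front: what you actually use in Step~3 is that $\Lambda_H=\Gamma_{[k]}\cap H$ is a lattice in $H$, and this already follows (in the real case relevant to the paper, where $S$ consists of Archimedean places) from $\mathrm{Lie}(L)=\mathfrak h$, which makes $H$ open in $L$; the finite-index statement then drops out at the end from $L\subseteq N_{G^k}(\Delta_{\mathcal P}(G))=\Delta_{\mathcal P}(G)\,Z(G)^k$. Second, in Step~3 the key measure-theoretic step is the standard fact that if $\Lambda_H$ is a lattice in $H$ and $N:=\ker\psi\trianglelefteq H$ is closed with $\psi(\Lambda_H)$ discrete, then $\Lambda_H\cap N$ is a lattice in $N$; combined with the finiteness of $\Lambda_H\cap N$ that you established by chaining along $\mathcal P_v$-relations, this forces $N$ compact and hence $\mathcal P_v=\mathcal P$. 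Your concern about the mixed $S$-arithmetic setting is legitimate in general, but in the paper's application $G$ is a real semisimple Lie group, so these issues do not arise.
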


We note that since $\bG$ is simply connected and isotropic over $F_v$ for $v\in S$, by the Strong Approximation Theorem \cite[\S7.4]{PR}, $\Gamma_W$ is an irreducible lattice in $G$, so that this lemma is applicable in our case.

\begin{proof}
	By \cite[\S7.2]{PR},
	every normal subgroup of $G_v=\bG(K_v)$ for $v\in S$,
	is central. We may replace $G$ by $G/Z(G)$ and $\Gamma_i$ by $\Gamma_i Z(G)/Z(G)$
	and carry out the proof when the centre is trivial. 
	To simplify our presentation, we abuse notation and assume that $Z(G)=\{e\}$.
	Using that $G_v$'s are non-commutative and simple,
	it is easy to deduce that every normal subgroup of $G$ is of the form
	$\prod_{v\in S'} G_v$ for some $S'\subset S$.
	Moreover, any normal subgroup $N$ of $G^{k}$  is of the form 
	\begin{equation}
	\label{eq:N}
	N=N_1\times \cdots \times N_k,
	\end{equation}
	where $N_i=\prod_{v\in S_i} G_v$ for some $S_i\subset S$.
	We note that if $N$ is such a subgroup, it follows from irreducibility of lattices $\Gamma_i$ that
	\begin{equation}
	\label{eq:closure}
	\overline{x_{[k]} N} \supset x_{[k]} G_I,
	\end{equation}
	where $I=\{i:\, S_i\ne \emptyset\}$.

	We note that the argument of \cite[Th.~1.13]{rag}
	can be extended to immersed subgroups (namely, to subgroups given by 
	continuous embeddings $L\to G^{k}$), and 
	since the orbit $x_{[k]}L$ supports finite $L$-invariant measure,
	it follows that $x_{[k]}L$ is closed in $X_{[k]}$.
	
	We say that $x_i$ is commensurable with $x_j$ if the subgroups $\hbox{Stab}_G(x_i)$  and $\hbox{Stab}_G(x_j)$ are commensurable.
	Suppose that $L\subset \Delta_\cP(G)$ for some 
	proper partition $\cP$ of $[k]$
	such that for every $I\in \cP$, the points $x_i$, $i\in I$,
	are commensurable. Then 
	$$
	x_{[k]}\Delta_\cP(G)\simeq \prod_{I\in \cP} \Gamma_I\backslash G,
	$$
	where $\Gamma_I=\cap_{i\in I} \hbox{Stab}_G(x_i)$ are irreducible lattices in $G$. Hence, in this case we can reduce the number of factors,
	so that, without loss of generality, we may assume that such partition does not exists.
	
	We claim that under this assumption, $x_{[k]}L=X_{[k]}$  and proceed by 
	induction on $k$. The statement is clear when $k=1$. 
	We consider the decomposition $G^{k}=G^{k-1}\times G$.
	Let $L_1$ and $L_2$ denote the projections of $L$ to
	each of the factors. Since $\Delta_{[k]}(G)\subset L$,  it clear that $L_2=G$.
	Suppose that $(l_1,l_2),(l_1,l_2')\in L$ for some $l_1\in L_1$ and $l_2\ne l'_2\in G$.
	Then $(e,l_2^{-1}l_2')\in L$, and since $\Delta_{[k]}(G)\subset L$,
	we deduce that $\{e\}\times N\subset L$ for some non-trivial normal subgroup $N$	of $G$. As we observed above, $N=\prod_{v\in S'} G_v$ for some non-empty $S'\subset S$.
	Since the orbit $x_{[k]}L$ is closed,
	we deduce from \eqref{eq:closure} that 
	$$
	x_{[k]}L=x_{[k]}L(\{e\}\times G)=x_{[k]}(L_1\times G).
	$$
	Hence, in this case our analysis reduces to understanding finite-volume orbits in the space $X_{[k-1]}$, so that the claim follows from the inductive hypothesis.
	
	Now we suppose that for every $l_1\in L_1$, there exists unique $l_2\in L_2=G$ such that $(l_1,l_2)\in L$.
	Namely, there exists a surjective map $\phi:L_1\to G$ such that
	$L=\{(l,\phi(l)):\, l\in L_1\}$.
	It follows from uniqueness that $\phi$ is a homomorphism,
	and that 
	\begin{equation}
	\label{eq:diag}
	\phi(g,\ldots,g)=g\quad\hbox{ for all $g\in G$.}
	\end{equation}
	We observe that the orbit $x_{[k-1]}L_1$ in $X_{[k-1]}$ supports a finite invariant measure,
	which is the push-forward of the finite invariant measure on $x_{[k]}L$.
	Hence, we can apply the inductive assumption to 
	$x_{[k-1]}L_1$ to deduce that 
	$x_{[k-1]}L_1=x_{[k-1]}G^{k-1}$.
	This implies that the subgroup  $L_1$ is open in $G^{k-1}$.
	Since $\bG$ is simply connected, $G=\prod_{v\in S} \bG(F_v)$ is connected (see \cite[Prop.~7.6]{PR}),
	so that $L_1=G^{k-1}$. We have shown that
	$$
	L=\{(g,\phi(g)): g\in G^{k-1}\},
	$$
	where $\phi:G^{k-1}\to G$ is a surjective homomorphism.
	Let $N$ be the kernel of $\phi$. 
	Using \eqref{eq:diag}, we deduce that $N$ is non-trivial unless $k=2$.
	Moreover, if $N$ is trivial, it follows from \eqref{eq:diag} that $L=\Delta_{[2]}(G)$,
	so that the lemma holds. Hence, we can suppose that $N\ne \{e\}$.
	The subgroup $N$ is normal in $G^{k-1}$,
	so that it is of the form \eqref{eq:N}.
	In particular, it follows that there exists a closed normal subgroup $M$ of $G^{k-1}$
	commuting with $N$
	such that $G^{k-1}=MN$ and $M\cap N=\{e\}$. Hence,
	$$
	L= \{(nm,\phi(m)):\, n\in N,m\in M\}=(N\times\{e\})\{(m,\phi(m)): \,m\in M\}.
	$$
	Let $I$ be the subset of $[k-1]$ such that $N_i\ne \{e\}$ for $i\in I$.
	Then since the orbit $x_{[k]}L$ is closed, and the lattices $\Gamma_i$ are irreducible in $G$,
	it follows that 
	$$
	x_{[k]}L=x_{[k]}L \left(\prod_{i\in I} G\right).
	$$
	Hence, if $I\ne \emptyset$, we can complete the proof by induction.
\end{proof}

\subsection{Volume estimates}

We prove the estimate for $\Omega_W(\gamma)$ which was used in \eqref{eq:omega}.

\begin{lemma}
	\label{l:volume}
	There exists $\theta>0$ such that for every $\gamma\in \Gamma$, 
	$$
	\Omega_W(\gamma)\gg_W \hbox{\rm H}(\gamma)^\theta.
	$$
\end{lemma}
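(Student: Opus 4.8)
\textit{Proof strategy.} The plan is to strip the statement down to a purely local counting problem at each non‑Archimedean place and then to settle that problem by Bruhat--Tits theory. First note that for $\gamma\in\Gamma=\bG(F)$ only the non‑Archimedean part of the height is relevant (this is the quantity that enters \eqref{eq:omega}), so I would in fact prove $\Omega_W(\gamma)\gg_W \hbox{\rm H}_f(\gamma)^{\theta}$ with $\hbox{\rm H}_f(\gamma):=\prod_{v\notin\mathcal V_F^\infty}\|\gamma\|_v$. Since $\Gamma_W=\bG(F)\cap(G\times W)$ is a congruence subgroup it is commensurable with $\Lambda:=\bG(F)\cap\prod_{v\notin\mathcal V_F^\infty}\bG(O_v)$; writing $\Omega$ as a ratio of Haar covolumes, a two‑line computation shows that $\Omega_A(\gamma)\asymp\Omega_B(\gamma)$ whenever $A,B$ are commensurable, with implied constant depending only on the commensurability index (hence on $W$). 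So it suffices to bound $\Omega_\Lambda(\gamma)$.

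Next I would convert $\Omega_\Lambda(\gamma)$ into an exact Euler product. As $\bG$ is simply connected and isotropic at some $v\in S$, the Strong Approximation Theorem \cite{PR} makes $\Lambda$ dense in $\mathbf K:=\prod_{v\notin\mathcal V_F^\infty}\bG(O_v)$. The subgroup $\mathbf K\cap\gamma\mathbf K\gamma^{-1}$ is open in $\mathbf K$ (it equals $\bG(O_v)$ at all but finitely many $v$), and from $\gamma^{-1}\delta\gamma\in\bG(F)$ one gets $\Lambda\cap\gamma\mathbf K\gamma^{-1}=\Lambda\cap\gamma\Lambda\gamma^{-1}$; density of $\Lambda$ in $\mathbf K$ then yields
\[
\Omega_\Lambda(\gamma)=[\mathbf K:\mathbf K\cap\gamma\mathbf K\gamma^{-1}]=\prod_{v\notin\mathcal V_F^\infty}\big[\bG(O_v):\bG(O_v)\cap\gamma\bG(O_v)\gamma^{-1}\big],
\]
a finite product whose factor at $v$ is $1$ precisely when $\gamma\in\bG(O_v)$, i.e. when $\|\gamma\|_v=1$.

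The core is then the local estimate $\big[\bG(O_v):\bG(O_v)\cap\gamma\bG(O_v)\gamma^{-1}\big]\gg_v \|\gamma\|_v^{\theta_0}$ with $\theta_0>0$ uniform over the places where $\bG$ is unramified. Writing $\gamma=k_1 a\omega k_2$ in the Cartan decomposition \eqref{eq:cartan_padic} of $\bG(F_v)$, the bounded factors $k_1,k_2$ can be conjugated away (and $\omega=e$ at the unramified places), so the index equals $\big[\bG(O_v):\bG(O_v)\cap a\bG(O_v)a^{-1}\big]$ with $a\in A_v^+$ the $A_v^+$‑component of $\gamma$. Choose $\beta\in\Sigma_v^+$ realising $\max_{\alpha\in\Sigma_v^+}|\alpha(a)|_v=q_v^{m}$ (here $q_v$ is the residue field size), and let $U$ be the root group of the root opposite to $\beta$, with parametrisation $u(\cdot)$. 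Using the standard facts $U(F_v)\cap\bG(O_v)=U(O_v)$ and $a\,u(x)\,a^{-1}=u(\beta(a)^{-1}x)$ one computes $U(O_v)\cap a\bG(O_v)a^{-1}=U(\mathfrak p_v^{m}O_v)$, hence an injection $U(O_v)/U(\mathfrak p_v^{m}O_v)\hookrightarrow\bG(O_v)/(\bG(O_v)\cap a\bG(O_v)a^{-1})$, giving the index $\ge q_v^{m}$. Arguing as in the proof of Lemma \ref{lemma1_outline_main1_padic}(i) at the place $v$, together with a uniform comparison of the adjoint norm and the norm on the defining representation at the unramified places, $q_v^{m}$ is polynomially comparable to $\|\gamma\|_v$; the finitely many ramified places are treated the same way with the relevant special parahoric in place of $\bG(O_v)$ and absorbed into a fixed multiplicative constant.

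Multiplying the local bounds over $v\notin\mathcal V_F^\infty$ and using the product formula above gives $\Omega_\Lambda(\gamma)\gg \hbox{\rm H}_f(\gamma)^{\theta_0}$, and the commensurability reduction upgrades this to $\Omega_W(\gamma)\gg_W \hbox{\rm H}(\gamma)^{\theta}$ for a suitable $\theta>0$. The one genuinely nontrivial ingredient is the local estimate of the third paragraph — that conjugating the hyperspecial subgroup $\bG(O_v)$ by an element with deep Cartan projection forces the intersection to have index at least a fixed power of $\|\gamma\|_v$; this is exactly where $p$‑adic structure theory (root subgroups, or the metric geometry of the building) is needed, everything else being strong approximation together with the comparison estimates already packaged in Lemma \ref{lemma1_outline_main1_padic}.
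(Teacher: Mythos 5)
Your proposal is correct, and it follows the same overall skeleton as the paper's proof: pass from $\Omega_W$ to a product of local indices via commensurability, strong approximation, and the Cartan decomposition, then prove a local bound for the index $[\bG(O_v):\bG(O_v)\cap a\bG(O_v)a^{-1}]$ in terms of $\max_\alpha|\alpha(a)|_v$, and finally compare the latter to $\|\gamma\|_v$ exactly as in Lemma \ref{lemma1_outline_main1_padic}(iii). The genuine difference is the local ingredient: the paper invokes Macdonald's volume formula \cite[3.2.15]{mac}, which gives the sharp lower bound $\nu_v(K_va_vK_v)\geq|\Delta_v(a_v)|_v$ with $\Delta_v$ the product of all positive roots (and then uses \cite[Lemma~6.4]{STBT} to get the equality $\|a_v\|_v=\max_\psi|\psi(a_v)|_v$ for almost all $v$, so the implicit constants in the Euler product are exactly $1$ outside a fixed finite set). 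You instead inject a single opposite root group $U(O_v)/U(\mathfrak p_v^m O_v)$ into the quotient, obtaining the weaker but entirely elementary lower bound $q_v^m=\max_\alpha|\alpha(a)|_v$; this sidesteps Macdonald's formula at the cost of a smaller exponent $\theta$. Your argument does rely on the same uniformity point at the unramified places (constant $1$, not merely $\gg_v$), which you flag but state slightly imprecisely as ``$\gg_v$''; with the $\bG(O_v)$--bi-invariant $\max$ norm at almost all $v$ the comparison is exact, and then the finitely many exceptional places contribute a bounded constant, so the Euler product is controlled. So your route is a legitimate and somewhat more self-contained alternative to the published one, trading Macdonald's formula for a direct root-group index computation.
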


\begin{proof}
We first observe that $\Omega_W(\gamma)$ can be interpreted in terms of volumes
of suitable subsets of $D$. We recall that $D$ is the restricted product of $\bG(F_v)$, $v\in \mathcal{V}^f_F$. We denote $\nu_D$ the invariant measure on $D$ which is the product of 
invariant measures $\nu_v$ on $\bG(F_v)$ such that $\nu_v(\bG(O_v))=1$ for almost all $v$.
We normalise $\nu_D$ so that $\nu_D(W)=1$. Then
since the projection of $\Gamma$ to $D$ is dense,
\[
\Omega_W(\gamma)= |W : W\cap\gamma W \gamma^{-1}| = \nu_D(W\gamma W).
\]
We claim that there exists $\theta>0$ such that for every $d\in D$
\begin{equation}\label{eq:vvv}
\nu_D(WdW) \gg_W \hbox{H}(d)^\theta.
\end{equation}
This will imply the lemma.
	
For almost all places $v$, the group $K_v=\bG(O_v)$ is a hyperspecial maximal compact open subgroup 
of $\bG(F_v)$ (see \cite{Ti}). For the other places $v\in \mathcal{V}_F^f$, 
we fix a good maximal compact open subgroup $K_v$ of $\bG(F_v)$.
Let $K=\prod_{v\in \mathcal{V}^f_F} K_v$. Then $K$ is a compact open subgroup of $D$,
so that it is commensurable with $W$, and we have 
$$
\nu_D(WdW)\gg_W \nu_D(KdK).
$$
Now it will be convenient to normalise the measures $\nu_v$ on $\bG(F_v)$ so that $\nu_v(K_v)=1$.
We claim that there exists $\theta>0$ such that for every $d_v\in \bG(F_v)$,
\begin{equation}\label{eq:vvv1}
\nu_v(K_vd_vK_v)\gg_v \|d_v\|_v^\theta,
\end{equation}
and moreover for almost all $v$,
\begin{equation}\label{eq:vvv2}
\nu_v(K_vd_vK_v)\ge \|d_v\|_v^\theta.
\end{equation}
Since $\hbox{H}$ is defined as a product of the norms $\|\cdot \|_v$, this will imply \eqref{eq:vvv}.

We recall the Cartan decomposition $\bG(F_v)=K_vA_v^+\Omega_v K_v$ introduced in \eqref{eq:cartan_padic}. For almost all $v$, the group $\bG$ is quasi-split over $F$
and split over unramified extension of $F$. In this case, we have the Cartan decomposition
with $\Omega_v=\{e\}$. 
For $d_v=k_1a_v\omega k_2\in K_vA_v^+\Omega_v K_v$,
$$
\nu_v(K_vd_vK_v)=\nu_v(K_va_v \omega K_v)\ge \nu_v(K_va_vK'_v)\gg_v  \nu_v(K_va_vK_v),
$$
where $K'_v=\cap_{\omega\in\Omega_v\cup\{e\}} \omega K_v \omega^{-1}$
is a compact open subgroup of $K_v$, and 
$$
\|d_v\|_v\ll_v \|a_v\|_v.
$$
Moreover, for almost all $v$,
$$
K_vd_vK_v=K_va_vK_v\quad\hbox{and}\quad \|d_v\|_v = \|a_v\|_v.
$$
Hence, it is sufficient to prove \eqref{eq:vvv1} and \eqref{eq:vvv2} when $d_v=a_v\in A_v^+$.

Let $\Delta_v$ denotes the product of all positive roots of $\bA_v$.
It follows from \cite[3.2.15]{mac} that if $K_v$ is a hyperspecial maximal compact
subgroup of $\bG(F_v)$, then 
\begin{equation}
\label{eq:v1}
\nu_v(K_va_vK_v)\ge |\Delta_v(a_v)|_v,\quad \hbox{for $a_v\in A_v^+$.}
\end{equation}
In particular, this bound holds for almost all places $v$.
For the other places $v$, we also have
\begin{equation}
\label{eq:v2}
\nu_v(K_va_vK_v)\gg_v |\Delta_v(a_v)|_v,\quad \hbox{for $a_v\in A_v^+$.}
\end{equation}
On the other hand, we recall from the proof of Lemma \ref{lemma1_outline_main1_padic}(iii)
that for all $a_v\in A_v^+$,
$$
\|a_v\|_v\ll_v \max_{\psi\in \Phi_v^{dom}} |\psi(a_v)|_v
$$
and there exists $\theta'>0$ such that
$$
\max_{\psi\in \Phi_v^{dom}} |\psi(a_v)|_v \le \left(\max_{\psi\in \Pi_v} |\alpha(a_v)|_v\right)^{\theta'}.
$$
Hence, combining this estimate with \eqref{eq:v2}, we deduce \eqref{eq:vvv1}.
Further, by \cite[Lemma~6.4]{STBT}, which also extends to reducible representations,
we obtain that for almost all places $v$, 
$$
\|a_v\|_v =\max_{\psi\in \Phi_v^{dom}} |\psi(a_v)|_v
$$
for all $a_v\in A_v^+$,
so that \eqref{eq:vvv2} follows from \eqref{eq:v1}. This completes the proof of the lemma.
\end{proof}

\section{Wasserstein distances on couplings}
\label{sec:wasserstein}

\subsection{Wasserstein distances}

Let $X$ be a locally compact Hausdorff space. We denote by $\cC_c(X)$ the space of continuous
functions on $X$ with compact supports, equipped with the topology of uniform convergence on compact subsets, 
and we write $\cP(X)$ for the space of Borel probability measures on $X$, which we shall think of as non-negative elements in the dual space $\cC_c(X)^*$.

Given a linear subspace $\cA \subset\cC_c(X)$ and a norm $M$ on $\cA$, we define the \emph{Wasserstein distance}
$\dist_M$ on $\cP(X)$ by
\[
\dist_M(\mu,\nu)
:= 
\sup\Big\{
| \mu(\phi) - \nu(\phi) | \, : \, \phi \in \cA\;\hbox{ with } M(\phi) \leq 1 \Big\}
\]
for $\mu, \nu \in \cP(X)$. 
We see that this is indeed a distance (metric) if $\cA$ is dense in $\cC_c(X)$, otherwise it is only a semi-distance (semi-metric). The following lemma is immediate. 

\begin{lemma}
\label{lemma:normbnds}
If $M \leq E \, N$ for a constant $E > 0$, then $\dist_{N} \leq E \, \dist_M$.  
\end{lemma}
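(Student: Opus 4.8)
The plan is to unwind the definition of the Wasserstein semi-distance and use a simple rescaling of test functions. Fix $\mu,\nu\in\cP(X)$. I would take an arbitrary $\phi\in\cA$ with $N(\phi)\le 1$; the goal is to bound $|\mu(\phi)-\nu(\phi)|$ by $E\,\dist_M(\mu,\nu)$, after which taking the supremum over all such $\phi$ yields the stated inequality.

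From the hypothesis $M\le E\,N$ we obtain $M(\phi)\le E\,N(\phi)\le E$. Hence the rescaled function $\psi:=\phi/E$, which again lies in $\cA$, satisfies $M(\psi)=M(\phi)/E\le 1$ by homogeneity of the norm, so by the definition of $\dist_M$ we have $|\mu(\psi)-\nu(\psi)|\le\dist_M(\mu,\nu)$. Since $\mu$ and $\nu$ are linear functionals on $\cC_c(X)$, we have $|\mu(\psi)-\nu(\psi)|=\tfrac{1}{E}|\mu(\phi)-\nu(\phi)|$, and therefore $|\mu(\phi)-\nu(\phi)|\le E\,\dist_M(\mu,\nu)$.

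Taking the supremum over all $\phi\in\cA$ with $N(\phi)\le 1$ then gives $\dist_N(\mu,\nu)\le E\,\dist_M(\mu,\nu)$, as claimed. There is no genuine obstacle here; the only point to keep straight is the direction of the inequality — a smaller norm $M$ admits a \emph{larger} set of competitors $\{\phi: M(\phi)\le 1\}$ in the supremum defining $\dist_M$, so it is $\dist_M$ that dominates $\dist_N$, up to the factor $E$.
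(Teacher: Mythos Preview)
Your proof is correct and is exactly the immediate argument the paper has in mind; the paper itself gives no proof, simply stating that the lemma is immediate.
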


We say that a norm $M$ on $\cA$ is \emph{uniform} if there exists a constant $F>0$ such that 
$$
\|\phi \|_\infty \leq F\, M(\phi)\quad\quad \hbox{for all $\phi \in \cA$,}
$$  where $\| \cdot \|_\infty$ denotes the uniform norm on 
$\cC_c(X)$. Throughout this paper, all norms that we shall consider will be assumed to be uniform. \\

If the linear subspace $\cA \subset\cC_c(X)$ in addition is closed under multiplication, that is to say, if 
$\cA$ is a subalgebra of $\cC_c(X)$, and $M$ and $N$ are \emph{uniform} norms on $\cA$, then we 
write $M \preceq N$ if there exist constant $D_1, D_2>0$ such that
\[
M(\phi_1) \leq D_1\, N(\phi_1) \qand M(\phi_1\cdot \phi_2) \leq D_2\, N(\phi_1) N(\phi_2), \quad \textrm{for all $\phi_1, \phi_2 \in \cA$}.
\]

\subsection{Projective tensor product norms}

Let us now assume that $X_1$ and $X_2$ are locally compact Hausdorff spaces, and fix subalgebras
\[
\cA_1 \subset\cC_c(X_1) \qand \cA_2 \subset\cC_c(X_2).
\]
Let $\cA_1 \otimes \cA_2$ denote the (algebraic) tensor product of the algebras $\cA_1$ and $\cA_2$, i.e. the subalgebra of $\cC_c(X_1 \times X_2)$ which consists of functions which are finite sums of the form
\[
\sum_{i} (\phi_{1i} \otimes \phi_{2i})(x_1,x_2),\quad \quad \textrm{for $(x_1,x_2) \in X_1 \times X_2$},
\]
where $\phi_{1i}$ and $\phi_{2i}$ are elements of $\cA_1$ and $\cA_2$ respectively. If $M_1$ and $M_2$ are norms on $\cA_1$ and $\cA_2$ respectively, we define the \emph{projective tensor product norm} (or \emph{maximal cross-norm}) $M_1 \otimes M_2$ on the algebra $\cA_1 \otimes \cA_2$
by 
\[
(M_1 \otimes M_2)(\phi) := \inf\left\{ \sum_i M_1(\phi_{1i}) M_2(\phi_{2i}) \, : \, \phi = \sum_i \phi_{1i} \otimes \phi_{2i}\;\hbox{ with } \phi_{1i}\in\cA_1\hbox{ and } \phi_{2i}\in\cA_2 \right\}.
\]
If $M_1, N_1$ and $M_2, N_2$ are \emph{uniform} norms on $\cA_1$ and $\cA_2$ respectively, then
so are the projective tensor product norms $M_1 \otimes M_2$ and $N_1 \otimes N_2$, and the following useful lemma holds.

\begin{lemma}
\label{lemma:tensorprodpreceq}
If $M_1 \preceq N_1$ and $M_2 \preceq N_2$, then $M_1 \otimes M_2 \preceq N_1 \otimes N_2$.
\end{lemma}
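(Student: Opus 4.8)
The plan is to prove Lemma \ref{lemma:tensorprodpreceq} directly from the definitions of $\preceq$ and of the projective tensor product norm, by unwrapping both. Recall that $M_1 \preceq N_1$ means there exist constants $D_1^{(1)}, D_2^{(1)} > 0$ with $M_1(\psi) \le D_1^{(1)} N_1(\psi)$ and $M_1(\psi \psi') \le D_2^{(1)} N_1(\psi) N_1(\psi')$ for all $\psi, \psi' \in \cA_1$, and similarly $M_2 \preceq N_2$ gives constants $D_1^{(2)}, D_2^{(2)}$. We must produce two constants $E_1, E_2 > 0$ such that $(M_1 \otimes M_2)(\phi) \le E_1 (N_1 \otimes N_2)(\phi)$ for all $\phi \in \cA_1 \otimes \cA_2$, and $(M_1 \otimes M_2)(\phi \cdot \phi') \le E_2 (N_1 \otimes N_2)(\phi)(N_1 \otimes N_2)(\phi')$ for all $\phi, \phi' \in \cA_1 \otimes \cA_2$.

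For the first inequality, I would fix $\phi \in \cA_1 \otimes \cA_2$ and an arbitrary representation $\phi = \sum_i \phi_{1i} \otimes \phi_{2i}$ with $\phi_{1i} \in \cA_1$, $\phi_{2i} \in \cA_2$. Applying the one-variable bounds $M_1(\phi_{1i}) \le D_1^{(1)} N_1(\phi_{1i})$ and $M_2(\phi_{2i}) \le D_1^{(2)} N_2(\phi_{2i})$ termwise gives
\[
\sum_i M_1(\phi_{1i}) M_2(\phi_{2i}) \le D_1^{(1)} D_1^{(2)} \sum_i N_1(\phi_{1i}) N_2(\phi_{2i}).
\]
Taking the infimum over all such representations on the right-hand side yields $(M_1 \otimes M_2)(\phi) \le D_1^{(1)} D_1^{(2)} (N_1 \otimes N_2)(\phi)$, so $E_1 = D_1^{(1)} D_1^{(2)}$ works. (Here one uses that the infimum of the left side is at most the infimum of any termwise-larger quantity; strictly, for each fixed representation the left infimum is $\le$ that representation's value $\le D_1^{(1)} D_1^{(2)} \cdot$ that representation's $N$-value, then infimize over representations.)

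For the submultiplicativity inequality, fix $\phi, \phi' \in \cA_1 \otimes \cA_2$ and choose representations $\phi = \sum_i \phi_{1i} \otimes \phi_{2i}$ and $\phi' = \sum_j \phi'_{1j} \otimes \phi'_{2j}$. Then $\phi \cdot \phi' = \sum_{i,j} (\phi_{1i} \phi'_{1j}) \otimes (\phi_{2i} \phi'_{2j})$ is a representation of the product, so by definition of $M_1 \otimes M_2$ and then the submultiplicativity clauses of $M_1 \preceq N_1$ and $M_2 \preceq N_2$,
\[
(M_1 \otimes M_2)(\phi \cdot \phi') \le \sum_{i,j} M_1(\phi_{1i}\phi'_{1j}) M_2(\phi_{2i}\phi'_{2j}) \le D_2^{(1)} D_2^{(2)} \sum_{i,j} N_1(\phi_{1i}) N_1(\phi'_{1j}) N_2(\phi_{2i}) N_2(\phi'_{2j}),
\]
and the last sum factors as $D_2^{(1)} D_2^{(2)} \bigl(\sum_i N_1(\phi_{1i}) N_2(\phi_{2i})\bigr)\bigl(\sum_j N_1(\phi'_{1j}) N_2(\phi'_{2j})\bigr)$. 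Taking the infimum over the two representations independently gives $(M_1 \otimes M_2)(\phi \cdot \phi') \le D_2^{(1)} D_2^{(2)} (N_1 \otimes N_2)(\phi) (N_1 \otimes N_2)(\phi')$, so $E_2 = D_2^{(1)} D_2^{(2)}$ works. This establishes $M_1 \otimes M_2 \preceq N_1 \otimes N_2$. The only mild subtlety — hardly an obstacle — is the bookkeeping that $M_1 \otimes M_2$ and $N_1 \otimes N_2$ are genuine norms and are uniform (already asserted in the text preceding the lemma), and the order of quantifiers when passing to infima; everything else is a direct termwise estimate.
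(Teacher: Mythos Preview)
Your proof is correct and follows essentially the same approach as the paper's: represent elements of $\cA_1\otimes\cA_2$ by finite sums, expand the product $\phi\cdot\phi'$ as a double sum, apply the one-factor $\preceq$ bounds termwise, factor, and pass to the infimum defining the projective norm. The paper's write-up differs only cosmetically: it normalizes to $(N_1\otimes N_2)(\phi)=(N_1\otimes N_2)(\psi)=1$, chooses near-optimal representations via an $\eps$-approximation, and omits the (easier) first inequality, which you include explicitly.
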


\begin{proof}
Pick $\phi, \psi \in \cA_1 \otimes \cA_2$ with $(N_1 \otimes N_2)(\phi) = (N_1 \otimes N_2)(\psi) = 1$. Fix $\eps > 0$.
By the definition of the maximal cross-norm, we can find finite collections $a_k, c_l \in \cA_1$ and $b_k, d_l \in \cA_2$ such that
\[
\phi = \sum_k a_k \otimes b_k \qand 1+\eps \geq \sum_k N_1(a_k) N_2(b_k),
\] 
and
\[
\psi = \sum_l c_l \otimes d_l \qand 1+\eps \geq \sum_l N_1(c_l) N_2(d_l).
\] 
We see that
\[
1 = (N_1 \otimes N_2)(\phi) \, (N_1 \otimes N_2)(\psi)
\geq 
\sum_{k,l} N_1(a_k) N_1(c_l) N_2(b_k) N_2(d_l) - 2\eps - \eps^2.
\]
Since $M_1 \preceq N_1$ and $M_2 \preceq N_2$, the double sum above is bounded (up to a constant) from 
below by
\[
\sum_{k,l} M_1(a_k c_l) M_2(b_k d_l)
\]
which (by definition of $M_1 \otimes M_2$ as an infimum) is further bounded from below by
\[
(M_1 \otimes M_2)\left( \left(\sum_k a_k \otimes b_k\right) \cdot \left(\sum_l c_l \otimes d_l\right) \right) = (M_1 \otimes M_2)(\phi\cdot \psi).
\]
Since $\eps > 0$ is arbitrary, we conclude that there exists a constant $C>0$ such that 
\[
(M_1 \otimes M_2)(\phi\cdot \psi) \leq C,
\]
whenever $(N_1 \otimes N_2)(\phi) = (N_1 \otimes N_2)(\psi) = 1$, which finishes the proof.
\end{proof}

\section{General coupling estimates}
\label{PrfABCDT}

In this section we prove a general estimate for measures supported on product spaces
(Proposition \ref{mainest}) which includes Proposition \ref{mainineq} 
from Section \ref{outline_main1_0} as a special case.
This result  has been used in the proofs in Sections
\ref{outline_main1_0} and \ref{sec:salg}.
We work in a more abstract setting which we now introduce. 

\subsection{Notation}

Let $(X_1,\rho_1)$ and $(X_2,\rho_2)$ be locally compact metric spaces, and fix two subalgebras 
\[
\cA_1 \subset\cC_c(X_1) \qand \cA_2 \subset\cC_c(X_2)
\] 
of Lipschitz continuous functions 
on $(X_1,\rho_1)$ and $(X_2,\rho_2)$ respectively. Let $(M_1,N_1)$ and $(M_2,N_2)$ be two pairs of 
uniform norms on $\cA_1$ and $\cA_2$ such that 
\begin{equation}
\label{ass:norms}
M_i \preceq N_i \qand \Lip_{\rho_i}(\phi_i) \ll M_i(\phi_i), \quad\quad \textrm{for all $\phi_i \in \cA_i$},
\end{equation}
for $i = 1,2$, where $\Lip_{\rho_i}$ denotes the usual Lipschitz semi-norm with respect to the metric $\rho_i$. \\

Let $m_1$ and $m_2$ be Borel probability measures on $X_1$ and $X_2$ respectively, and fix a Borel 
probability measure $\eta$ on the direct product $X_1 \times X_2$, with marginals $\eta_1$ and $\eta_2$. 
Suppose that there exist jointly continuous $\bK$-actions (here $\bK$ is either $\bR$ or $\bQ_p$)
\[
h_i : \bK \times X_i \ra X_i, \quad\quad \textrm{$i = 1,2$},
\]
which preserve the measures $m_1$ and $m_2$ respectively, such that the {diagonal flow}
\begin{equation}
\label{defh0}
h(t) \cdot (x_1,x_2) = (h_1(t) \cdot x_1, h_2(t) \cdot x_2), 
\quad \quad
\textrm{for $(x_1,x_2) \in X_1 \times X_2$ and $t \in \bK$},
\end{equation}
preserves the measure $\eta$. One readily checks that $h_i$ preserves $\eta_i$, for $i = 1,2$, as well. We further
assume that the flows $h_i$ preserve the algebras $\cA_i$ for $i = 1,2$.\\

Our goal in this section is to provide an upper bound (Proposition \ref{mainest} below) on the Wasserstein
distance $\dist_{N_1 \otimes N_2}(\eta,m_1 \otimes m_2)$ in terms 
of the Wasserstein distances 
\[
\dist_{M_1}(\eta_1,m_1) \qand \dist_{M_2}(\eta_2,m_2), 
\]
under the following three assumptions on the flows $h_1$ and $h_2$
(cf. Lemmas \ref{lemma1}, \ref{lemma2}, \ref{lemma3}):

\begin{itemize}
\item (\textit{Polynomial growth w.r.t. $N_1$})
There exist constants $A \geq 1$ and $a > 0$ such that
\begin{equation}
\label{ass:polygrowth}
N_1(h_1(t) \cdot \phi_1) \leq A \max(1,|t|)^{a}\, N_1(\phi_1), 
\end{equation}
for all $\phi_1 \in \cA_1$ and $t\in \mathbb{K}$.

\item
(\textit{Polynomial rate of mixing w.r.t. $m_1$})
There exist constants $B > 0$ and $0 < w_1 \leq 1$ and $0 < b < 1/2$ such that
\begin{equation}
\label{ass:polmix}
| m_1((h_1(t) \cdot \phi_1) \phi_1) - m_1(\phi_1)^2 | \leq B \max(1,w_1 |t|)^{-b}\, N_1(\phi_1)^2, 
\end{equation}
and for all $\phi_1 \in \cA_1$ and $t\in\mathbb{K}$.

\item (\textit{Lipschitz continuity for $h_2$})
There exist constants $C > 0$ and $0 < w_2 \leq 1$ such that
\begin{equation}
\label{ass:Holder}
\|h_2(t) \cdot \phi_2 - \phi_2\|_\infty \leq C\, w_2 |t| \, N_2(\phi_2),
\end{equation}
for all $\phi_2 \in \cA_2$ and $t\in \mathbb{K}$ satisfying $|t|\le w_2^{-1}$.

\end{itemize}

\begin{remark}
We recall (upon retaining the notation from Section \ref{sec:p2}) that we have indeed 
verified these assumptions for the flows $h_I$ and $h_J$
defined in \eqref{eq:hI} and \eqref{eq:hJ}, with respect to the Sobolev norms 
$N_1 = \cS_{d+r,I}$ and $N_2 = \cS_{d+r,J}$ for sufficiently large $d$ and $r$ such that Properties N1--N4 hold
(see Lemmas \ref{lemma1}, \ref{lemma2}, and \ref{lemma3}). 
These assumptions also holds for the flows $h_I$ and $h_J$
appearing in Section \ref{sec:hij}.
\end{remark}

\subsection{The main estimate}

The following result generalizes Proposition \ref{mainineq}. Its proof will occupy the rest of this section.

\begin{proposition}
\label{mainest}
With the notation and assumptions above, we have 
\begin{equation}
\label{mainestineq}
\dist_{N_1 \otimes N_2}(\eta,m_1 \otimes m_2)
\ll
\max\big((\mathcal{M}|V(T)|^a)^{1/2},(w_1 T)^{-b/2}, w_2 T \big),
\end{equation}
for all $T\in [w_1^{-1},w_2^{-1}]$, where
\begin{align*}
V(T)&:=\{t\in \bK:\, |t|\le T\},\\
\mathcal{M} &:= \max\big( \dist_{M_1}(\eta_1,m_1), \dist_{M_2}(\eta_2,m_2) \big),
\end{align*}
and the implied constant depends only the constants $A, B$ and $C$, and on the norms $M_1, M_2,N_1$ and $N_2$. 
In particular, the bound \eqref{mainestineq} is uniform over all couplings $\eta$ of $\eta_1$ and $\eta_2$ which are invariant under the flow $h$.
\end{proposition}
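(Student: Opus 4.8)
The strategy is a standard "dynamical averaging" argument: to estimate $\dist_{N_1\otimes N_2}(\eta, m_1\otimes m_2)$ we test against an arbitrary $\phi = \sum_i \phi_{1i}\otimes\phi_{2i}$ with $(N_1\otimes N_2)(\phi)\le 1$, and exploit the $h$-invariance of $\eta$ to replace $\eta(\phi)$ by a time-average $\frac{1}{|V(T)|}\int_{V(T)} \eta(h(t)\cdot\phi)\,dt$. Writing $h(t)\cdot\phi = \sum_i (h_1(t)\cdot\phi_{1i})\otimes(h_2(t)\cdot\phi_{2i})$ and using \eqref{ass:Holder} to freeze the second coordinate at the cost of an error $O(w_2 T)$, we are left with an expression of the form $\eta_1\big(\text{(time-averaged }\phi_{1})\otimes(\text{something in }\cA_2)\big)$; more precisely the average over $t$ of $\psi_1(t):=h_1(t)\cdot\phi_1$ paired against a fixed function. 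The point is to show this time-averaged function is close in the $M_1$-sense (or in $L^2(m_1)$) to its mean $m_1(\phi_1)$, and then invoke $\dist_{M_1}(\eta_1,m_1)\le\mathcal M$ to pass from $\eta_1$ to $m_1$, and finally $\dist_{M_2}(\eta_2,m_2)\le\mathcal M$ for the second coordinate.

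\textbf{Key steps.} First I would reduce to $\phi=\phi_1\otimes\phi_2$ a single tensor with $N_1(\phi_1)=N_2(\phi_2)=1$ (the general case follows by the definition of the projective tensor norm as an infimum, summing the bounds). Second, fix $T\in[w_1^{-1},w_2^{-1}]$ and set $\Phi_1 = \frac{1}{|V(T)|}\int_{V(T)} h_1(t)\cdot\phi_1\,dt$. Using $h$-invariance of $\eta$ and then \eqref{ass:Holder},
\[
\eta(\phi_1\otimes\phi_2) = \frac{1}{|V(T)|}\int_{V(T)}\eta\big((h_1(t)\cdot\phi_1)\otimes(h_2(t)\cdot\phi_2)\big)\,dt = \eta(\Phi_1\otimes\phi_2) + O(w_2 T),
\]
where the error uses $\|h_2(t)\cdot\phi_2 - \phi_2\|_\infty\le C w_2|t| N_2(\phi_2)\le Cw_2 T$ together with uniformity of $N_1$ to control the $\Phi_1$-factor in sup-norm. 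Third, estimate $\|\Phi_1 - m_1(\phi_1)\|_{L^2(m_1)}$: expanding the square,
\[
\|\Phi_1 - m_1(\phi_1)\|_2^2 = \frac{1}{|V(T)|^2}\int_{V(T)}\int_{V(T)}\big(m_1((h_1(t-s)\cdot\phi_1)\phi_1) - m_1(\phi_1)^2\big)\,dt\,ds,
\]
using $m_1$-invariance of $h_1$ to reduce the two-parameter integrand to a function of $t-s$, then \eqref{ass:polmix} bounds this by $B\max(1,w_1|t-s|)^{-b}$, whose average over $V(T)\times V(T)$ is $O((w_1 T)^{-b})$ since $w_1 T\ge 1$; hence $\|\Phi_1 - m_1(\phi_1)\|_2 \ll (w_1 T)^{-b/2}$. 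Fourth, I need $\Phi_1$ (or a suitable approximation) to have controlled $M_1$-norm so $\dist_{M_1}$ applies. Here \eqref{ass:polygrowth} gives $N_1(h_1(t)\cdot\phi_1)\le A\max(1,|t|)^a$, so $N_1(\Phi_1)\ll |V(T)|^a$ (or $T^a$), hence $M_1(\Phi_1)\ll |V(T)|^a$ too by $M_1\preceq N_1$. Fifth, combine: write $\eta(\Phi_1\otimes\phi_2) = (\eta_1 - m_1)\text{-part} + m_1\text{-part}$. The cross term $|\eta_1(\Phi_1) - m_1(\Phi_1)|$ is handled by... actually the cleaner route: $\eta(\Phi_1\otimes\phi_2)$ versus $m_1(\Phi_1)\,\eta_2(\phi_2)$ versus $m_1(\phi_1)\,m_2(\phi_2)$, controlling successive differences by $\dist_{M_2}(\eta_2,m_2)M_2(\phi_2)\|\Phi_1\|_\infty$, by Cauchy–Schwarz with the $L^2$ bound on $\Phi_1 - m_1(\phi_1)$ against $\eta_2$ (a probability measure, so $|\eta(\cdot)|\le\|\cdot\|_{L^2}$ fails in general — instead use $\|\Phi_1-m_1(\phi_1)\|_\infty$ is not small, so this requires care), and by $\dist_{M_1}$ applied to $\Phi_1 - m_1(\phi_1)$ scaled by its $M_1$-norm $\ll|V(T)|^a$. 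This yields the term $(\mathcal M|V(T)|^a)^{1/2}$ — the square root appearing because one interpolates between the crude bound $\mathcal M\cdot|V(T)|^a$ (from $\dist_{M_1}$) and the $L^2$-smallness; balancing these two estimates for $\Phi_1 - m_1(\phi_1)$ produces the exponent $1/2$. Taking the maximum of the three error terms $O((\mathcal M|V(T)|^a)^{1/2})$, $O((w_1T)^{-b/2})$, $O(w_2 T)$ gives \eqref{mainestineq}.

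\textbf{Main obstacle.} The delicate point is Step Five: making the coupling argument rigorous when passing from $\eta$ to the product $m_1\otimes m_2$ through the intermediate $\eta_1\otimes\eta_2$-type quantities. The function $\Phi_1 - m_1(\phi_1)$ is small in $L^2(m_1)$ but a priori only $O(|V(T)|^a)$ in $M_1$ (hence in $\|\cdot\|_\infty$), so one cannot directly pair it against $\eta_1$ with a good bound; one must either interpolate (splitting $\Phi_1 - m_1(\phi_1)$ into a part that is small in $\|\cdot\|_\infty$ and a part small in $M_1$-norm via a truncation or smoothing argument) or argue via the tensor structure using Lemma \ref{lemma:tensorprodpreceq}, applying $\dist_{M_1\otimes M_2}\le\dist_{M_1\otimes M_2}(\eta,m_1\otimes m_2)$ which is itself $\le\mathcal M$-type but with the wrong norm — this is precisely why the geometric-mean term $(\mathcal M|V(T)|^a)^{1/2}$ (rather than $\mathcal M\cdot|V(T)|^a$) is the right and necessary bound. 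I expect the bookkeeping of this interpolation, and ensuring all implied constants depend only on $A,B,C$ and the norms as claimed, to be the technical heart of the proof; everything else is the routine averaging computation sketched above.
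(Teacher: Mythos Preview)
Your overall architecture matches the paper's: average over $V(T)$, use \eqref{ass:Holder} to freeze the second coordinate (cost $O(w_2T)$), and use mixing to control the first coordinate. Your Steps 1--3 are correct and correspond to the paper's Lemmas \ref{lemmaI} and \ref{lemmaII.3}. The gap is exactly where you flag it, Step~5, and your proposed fixes (interpolation, truncation/smoothing, or invoking Lemma~\ref{lemma:tensorprodpreceq}) are not the mechanism that produces the square root.

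Here is the missing idea. You computed $\|\Phi_1-m_1(\phi_1)\|_{L^2(m_1)}$, but to pair $(\Phi_1-c)\otimes\phi_2$ against the coupling $\eta$ via Cauchy--Schwarz you need an $L^2(\eta_1)$ bound, since $\eta_1$ is the marginal of $\eta$. The paper therefore works with
\[
E_T(\nu)\;:=\;\sup_{N_1(\phi_1)\le 1}\ \|\Phi_1-\nu(\phi_1)\|_{L^2(\nu)}
\]
for an arbitrary $h_1$-invariant probability $\nu$. Cauchy--Schwarz against $\eta$ gives $\dist_{N_1\otimes N_2}((P_T\otimes\id)^*\eta,\eta_1\otimes\eta_2)\ll E_T(\eta_1)$. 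Your Step~3 computes $E_T(m_1)\ll (w_1T)^{-b/2}$. The whole point is then to compare $E_T(\eta_1)$ with $E_T(m_1)$. Expanding the square and using $h_1$-invariance,
\[
E_T(\nu)^2=\sup_{N_1(\phi_1)\le1}\ \frac{1}{|V(T)|^2}\int_{V(T)}\!\!\int_{V(T)}\!\big(\nu((h_1(s-t)\cdot\phi_1)\phi_1)-\nu(\phi_1)^2\big)\,ds\,dt,
\]
so $E_T(\eta_1)^2-E_T(m_1)^2$ is controlled by differences of the form $(\eta_1-m_1)\big((h_1(u)\cdot\phi_1)\phi_1\big)$ and $\eta_1(\phi_1)^2-m_1(\phi_1)^2$. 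By definition of $\dist_{M_1}$ the first is at most $\dist_{M_1}(\eta_1,m_1)\cdot M_1\big((h_1(u)\cdot\phi_1)\phi_1\big)$, and this is where the \emph{submultiplicative} part of $M_1\preceq N_1$ is used (not just $M_1\ll N_1$): it gives $M_1\big((h_1(u)\cdot\phi_1)\phi_1\big)\ll N_1(h_1(u)\cdot\phi_1)N_1(\phi_1)\ll |V(T)|^a$ by \eqref{ass:polygrowth}. Hence
\[
|E_T(\eta_1)^2-E_T(m_1)^2|\;\ll\;|V(T)|^a\,\dist_{M_1}(\eta_1,m_1)\;\le\;|V(T)|^a\,\mathcal M,
\]
and taking a square root via $|x-y|\le\sqrt{|x^2-y^2|}$ yields $|E_T(\eta_1)-E_T(m_1)|\ll (\mathcal M\,|V(T)|^a)^{1/2}$. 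That is the origin of the exponent $1/2$: it is not interpolation between two bounds on $\Phi_1$, but rather the comparison of the \emph{variance} functionals $E_T(\cdot)^2$ for the two measures $\eta_1$ and $m_1$, which is linear in $\dist_{M_1}$. The remaining term $\dist_{N_1\otimes N_2}(\eta_1\otimes\eta_2,m_1\otimes m_2)\ll \mathcal M^{1/2}$ is elementary (your Step~5 chain, done for product measures). Note also that $\Phi_1$ need not lie in $\cA_1$, so one cannot literally apply $\dist_{M_1}$ to $\Phi_1$ as you suggest; the argument above avoids this by only ever evaluating $\eta_1-m_1$ on the products $(h_1(u)\cdot\phi_1)\phi_1\in\cA_1$.
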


We retain the notation from the beginning of this section, and assume that \eqref{ass:polygrowth}, \eqref{ass:polmix} and \eqref{ass:Holder} hold. In particular, the letters $A,a,B,b,C$ and $w_1, w_2$ have been 
assigned fixed meanings. \\

For $T>0$, we define the linear, positive and unital operator $P_T : \cC_c(X_1) \ra \cC_c(X_1)$ by
\[
(P_T \phi_1)(x_1) := \frac{1}{|V(T)|} \int_{V(T)} (h_1(t) \cdot \phi_1)(x_1) \, dt, \quad\quad \textrm{for $\phi_1 \in\cC_c(X_1)$},
\]
where $V(T)=\{t\in \bK:\, |t|\le T\}$.
We note that $P_T$ preserves $\cC_c(X_1)$, however we stress that it may \emph{not} preserve the subalgebra $\cA_1$. 
If we denote by $P_T^*$ its adjoint on $\cC_c(X_1)^*$, then 
since the flow $h_1$ preserves the measures $\eta_1$ and $m_1$, namely,
\[
P_T^*\eta_1 = \eta_1 \qand P_T^* m_1 = m_1.
\] 
The  triangle inequality for $\dist_{N_1 \otimes N_2}$ now yields
\begin{align}
\dist_{N_1 \otimes N_2}(\eta,m_1 \otimes m_2)
\leq\; &
\dist_{N_1 \otimes N_2}(\eta,(P_T \otimes \id)^*\eta) \tag{I} \\
&+
\dist_{N_1 \otimes N_2}((P_T \otimes \id)^*\eta,\eta_1 \otimes \eta_2)  \tag{II} \\
&+
 \dist_{N_1 \otimes N_2}(\eta_1 \otimes \eta_2,m_1 \otimes m_2). \tag{III} 
\end{align}
In what follows, we shall provide bounds on each term. These bounds will readily combine to 
the bound which is asserted in Proposition \ref{mainest}.
 
\subsection{Estimating Term (I)} 

\begin{lemma}
\label{lemmaI}
For all $T \in (0,w_2^{-1}]$, we have
\[
\dist_{N_1 \otimes N_2}(\eta,(P_T \otimes \id)^*\eta) \ll C\, w_{2}T,
\]
where the implied constant depends only on the norm $N_1$. 
\end{lemma}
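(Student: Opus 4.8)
The plan is to pair both measures against an arbitrary test function and use the $h$-invariance of $\eta$ to rewrite the difference as an average of a ``displacement in the second coordinate only'', which is then controlled by \eqref{ass:Holder}. Fix $T\in(0,w_2^{-1}]$ and $\phi\in\cA_1\otimes\cA_2$ with $(N_1\otimes N_2)(\phi)\le 1$; by definition $((P_T\otimes\id)^*\eta)(\phi)=\eta((P_T\otimes\id)\phi)$, and although $P_T$ need not preserve the subalgebra $\cA_1$, the function $(P_T\otimes\id)\phi$ still lies in $\cC_c(X_1\times X_2)$, so this pairing is meaningful and nothing is lost. Given $\eps>0$, I would choose a representation $\phi=\sum_i\phi_{1i}\otimes\phi_{2i}$ with $\sum_i N_1(\phi_{1i})N_2(\phi_{2i})\le 1+\eps$. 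Since the diagonal flow $h$ preserves $\eta$, for every $t\in\bK$ one has $\eta(\phi)=\eta(h(t)\cdot\phi)=\sum_i\eta\big((h_1(t)\cdot\phi_{1i})\otimes(h_2(t)\cdot\phi_{2i})\big)$; averaging this identity over $t\in V(T)$ and subtracting $((P_T\otimes\id)^*\eta)(\phi)=\frac{1}{|V(T)|}\int_{V(T)}\sum_i\eta\big((h_1(t)\cdot\phi_{1i})\otimes\phi_{2i}\big)\,dt$ (Fubini for the finite sum and the two finite measures) yields
\[
\eta(\phi)-((P_T\otimes\id)^*\eta)(\phi)=\frac{1}{|V(T)|}\int_{V(T)}\sum_i\eta\big((h_1(t)\cdot\phi_{1i})\otimes(h_2(t)\cdot\phi_{2i}-\phi_{2i})\big)\,dt.
\]

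Next I would bound the integrand uniformly for $t\in V(T)$: estimate $|\eta(\cdot)|$ by the supremum norm of its argument, use that each $h_1(t)$ is a homeomorphism of $X_1$ so that $\|h_1(t)\cdot\phi_{1i}\|_\infty=\|\phi_{1i}\|_\infty\ll_{N_1} N_1(\phi_{1i})$ by uniformity of $N_1$, and invoke \eqref{ass:Holder} — which is applicable precisely because $|t|\le T\le w_2^{-1}$ — to get $\|h_2(t)\cdot\phi_{2i}-\phi_{2i}\|_\infty\le C\,w_2|t|\,N_2(\phi_{2i})\le C\,w_2T\,N_2(\phi_{2i})$. Summing in $i$ gives $\big|\sum_i\eta(\cdots)\big|\ll C\,w_2T\sum_i N_1(\phi_{1i})N_2(\phi_{2i})\le C\,w_2T(1+\eps)$, uniformly in $t$; averaging over $V(T)$, letting $\eps\downarrow 0$, and taking the supremum over admissible $\phi$ yields $\dist_{N_1\otimes N_2}(\eta,(P_T\otimes\id)^*\eta)\ll C\,w_2T$, with the implied constant equal to the uniformity constant of $N_1$, as claimed.

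There is no genuine obstacle here; the one point to keep an eye on is that the hypothesis $T\le w_2^{-1}$ is used precisely — and only — to stay in the regime where \eqref{ass:Holder} is available, which is exactly what forces the second-coordinate displacement $\|h_2(t)\cdot\phi_{2i}-\phi_{2i}\|_\infty$ to be genuinely of size $w_2T$. The symmetry between the two factors is broken here only in that the first factor is merely averaged (and so contributes nothing beyond a bounded sup-norm), while the second is genuinely moved.
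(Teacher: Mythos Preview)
Your proof is correct and follows essentially the same approach as the paper: use the $h$-invariance of $\eta$ to rewrite $\eta(\phi)$ as an average over $V(T)$, subtract $\eta((P_T\otimes\id)\phi)$ to obtain an integral whose integrand involves only the displacement $h_2(t)\cdot\phi_{2i}-\phi_{2i}$ in the second factor, then bound this by uniformity of $N_1$ and the Lipschitz assumption \eqref{ass:Holder}. The only cosmetic difference is that you work with a near-optimal representation (introducing $\eps$ and letting it tend to zero), whereas the paper bounds by $\sum_i N_1(\phi_{1i})N_2(\phi_{2i})$ for an arbitrary representation and then takes the infimum at the end; these are equivalent.
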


\begin{proof}
Pick $\phi \in \cA_1 \otimes \cA_2$, and write it as a finite sum of the form
\begin{equation}
\label{pickf}
\phi = \sum_i \phi_{1i} \otimes \phi_{2i},
\end{equation}
for some $\phi_{1i} \in \cA_1$ and $\phi_{2i} \in \cA_2$. Since the flow $h$ (defined in \eqref{defh0}) 
preserves the measure $\eta$, we have
\[
\eta(\phi) = \sum_i \eta\left( \frac{1}{|V(T)|} \int_{V(T)} (h_1(t) \cdot \phi_{1i}) \otimes (h_2(t) \cdot \phi_{2i}) \, dt \right),
\]
for all $T > 0$. Note that
\[
\eta((P_T \otimes \id)\phi) = \sum_i \eta\left( \frac{1}{|V(T)|} \int_{V(T)} (h_1(t) \cdot \phi_{1i}) \otimes \phi_{2i} \, dt \right),
\]
and thus we see that 
\[
\eta(\phi) - \eta((P_T \otimes \id)\phi) = \sum_{i} \eta\left( 
\frac{1}{|V(T)|} \int_{V(T)} (h_1(t) \cdot \phi_{1i}) \otimes (h_2(t) \cdot \phi_{2i} - \phi_{2i}) \, dt \right).
\]
Hence,
\[
| \eta(\phi) - \eta((P_T \otimes \id)\phi) | \leq \sum_i \|\phi_{1i}\|_\infty \cdot \max_{|t|\le T} \|h_2(t) \cdot \phi_{2i} - \phi_{2i}\|_\infty.
\]
Since $N_1$ is assumed to be uniform, we have $\|\phi_{1i}\|_\infty \ll N_1(\phi_{1i})$ for all $i$, and 
by \eqref{ass:Holder} we have
\[
\max_{|t|\le T} \|h_2(t) \cdot \phi_{2i} - \phi_{2i}\|_\infty \leq C\, w_{2} T\, N_2(\phi_{2i}),
\]
for every $i$. Thus, we conclude that
\[
| \eta(\phi) - \eta((P_T \otimes \id)\phi) | \ll C\,w_2 T\,  \left(\sum_i N_1(\phi_{1i}) N_2(\phi_{2i}) \right),
\]
where the implied constant depends only on $N_1$. Upon taking the infimum over all representations of $\phi$ as a finite sum as in \eqref{pickf}, we see that 
\[
| \eta(\phi) - \eta((P_T \otimes \id)\phi) | \ll C\,w_2T\, (N_1 \otimes N_2)(\phi),
\]
which finishes the proof.
\end{proof}

\subsection{Estimating Term (II)} 

\begin{lemma}
\label{lemmaII}
For all $T \ge w_1^{-1}$, we have
\[
\dist_{N_1 \otimes N_2}((P_T \otimes \id)^*\eta,\eta_1 \otimes \eta_2)
\ll
\max\big(\sqrt{A}\, |V(T)|^{a/2} \, \dist_{M_1}(\eta_1,m_1)^{1/2}, B\, (w_1T)^{-b/2}\big),
\]
where the implied constant depends only on the norms $M_1$, $N_1$ and $N_2$.
\end{lemma}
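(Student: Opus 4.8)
The plan is to evaluate both measures on an arbitrary element of $\cA_1\otimes\cA_2$, reduce the whole estimate to a variance bound for the time‑average $P_T\phi_1$ with respect to the marginal $\eta_1$, and then prove that variance bound by unfolding the average and feeding in the hypotheses \eqref{ass:polygrowth}, \eqref{ass:polmix} together with $M_1\preceq N_1$.

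First I would take $\phi=\sum_i\phi_{1i}\otimes\phi_{2i}\in\cA_1\otimes\cA_2$. Since $h_1$ preserves $\eta_1$ we have $P_T^*\eta_1=\eta_1$, hence $\eta_1(P_T\phi_{1i})=\eta_1(\phi_{1i})$ and therefore
\[
\big((P_T\otimes\id)^*\eta\big)(\phi)-(\eta_1\otimes\eta_2)(\phi)=\sum_i\Big(\eta\big((P_T\phi_{1i})\otimes\phi_{2i}\big)-\eta_1(P_T\phi_{1i})\,\eta_2(\phi_{2i})\Big).
\]
Writing $\psi_i=P_T\phi_{1i}\in\cC_c(X_1)$, the $i$‑th term equals $\int\big(\psi_i(x_1)-\eta_1(\psi_i)\big)\phi_{2i}(x_2)\,d\eta(x_1,x_2)$, and the Cauchy--Schwarz inequality for the probability measure $\eta$ (whose marginals are $\eta_1,\eta_2$) bounds it by $\big(\operatorname{Var}_{\eta_1}(\psi_i)\big)^{1/2}\,\|\phi_{2i}\|_\infty\ll\big(\operatorname{Var}_{\eta_1}(\psi_i)\big)^{1/2}\,N_2(\phi_{2i})$ by uniformity of $N_2$. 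Thus it suffices to establish, for every $\phi_1\in\cA_1$,
\[
\operatorname{Var}_{\eta_1}(P_T\phi_1)\ll\Big(A\,|V(T)|^{a}\,\dist_{M_1}(\eta_1,m_1)+B\,(w_1T)^{-b}\Big)\,N_1(\phi_1)^2,
\]
since taking square roots, summing over $i$, and passing to the infimum over all ways of writing $\phi=\sum_i\phi_{1i}\otimes\phi_{2i}$ then yields the asserted bound on $\dist_{N_1\otimes N_2}$ (replacing $B$ by $\max(B,1)$ if necessary turns $\sqrt{B}$ into $B$).

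For the variance bound I would use that $h_1$ preserves $\cA_1$ and that $(h_1(s)\phi_1)(h_1(t)\phi_1)=h_1(s)\big(\phi_1\cdot h_1(t-s)\phi_1\big)$, which with $h_1$‑invariance of $\eta_1$ gives
\[
\operatorname{Var}_{\eta_1}(P_T\phi_1)=\frac{1}{|V(T)|^{2}}\int_{V(T)}\int_{V(T)}\Big(\eta_1\big(\phi_1\cdot h_1(t-s)\phi_1\big)-\eta_1(\phi_1)^2\Big)\,ds\,dt .
\]
For fixed $u=t-s$ I split the integrand as $\big(m_1(\phi_1\cdot h_1(u)\phi_1)-m_1(\phi_1)^2\big)+[\eta_1-m_1]\big(\phi_1\cdot h_1(u)\phi_1\big)+\big(m_1(\phi_1)^2-\eta_1(\phi_1)^2\big)$. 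The first bracket is $\leq B\max(1,w_1|u|)^{-b}N_1(\phi_1)^2$ by \eqref{ass:polmix}; the second, since $\phi_1\cdot h_1(u)\phi_1\in\cA_1$, is $\leq\dist_{M_1}(\eta_1,m_1)\,M_1(\phi_1\cdot h_1(u)\phi_1)\ll\dist_{M_1}(\eta_1,m_1)\,N_1(\phi_1)N_1(h_1(u)\phi_1)\leq A\max(1,|u|)^{a}\dist_{M_1}(\eta_1,m_1)N_1(\phi_1)^2$ by $M_1\preceq N_1$ and \eqref{ass:polygrowth}; the third factors as $\big(m_1(\phi_1)-\eta_1(\phi_1)\big)\big(m_1(\phi_1)+\eta_1(\phi_1)\big)$ and is $\ll\dist_{M_1}(\eta_1,m_1)N_1(\phi_1)^2$ by uniformity of $M_1$, hence is absorbed into the second. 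Integrating over $V(T)^2$ then reduces everything to the two elementary estimates
\[
\frac{1}{|V(T)|^{2}}\int_{V(T)}\int_{V(T)}\max(1,|t-s|)^{a}\,ds\,dt\ll|V(T)|^{a},\qquad
\frac{1}{|V(T)|^{2}}\int_{V(T)}\int_{V(T)}\max(1,w_1|t-s|)^{-b}\,ds\,dt\ll(w_1T)^{-b},
\]
the first being crude (each integrand is $\ll|V(T)|^a$) and the second following by splitting the inner integral at $|t-s|=w_1^{-1}$ and a power‑function computation valid since $T\geq w_1^{-1}$ and $b<1$; both go through verbatim over $\bR$ and over $\bQ_p$, where $V(T)$ is an additive subgroup so the substitution $u=t-s$ is clean.

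The points needing care are mainly bookkeeping. First, $P_T\phi_1$ lies only in $\cC_c(X_1)$, not in $\cA_1$, so $\dist_{M_1}$ and $M_1$ must never be applied to it directly; the above organisation avoids this by unfolding the time‑average before any spectral‑gap input is used, after which every argument of $\dist_{M_1}$ is the genuine algebra element $\phi_1\cdot h_1(u)\phi_1$. Second, one must keep the powers of $A$ and of $|V(T)|$ down to exactly one, which is precisely why the identity $(h_1(s)\phi_1)(h_1(t)\phi_1)=h_1(s)(\phi_1\cdot h_1(t-s)\phi_1)$ is used: the growth factor from \eqref{ass:polygrowth} is then charged only to the single translated copy $h_1(u)\phi_1$ with $u=t-s$, while the outer $h_1(s)$ disappears under $\eta_1$‑invariance. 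Third, the Fubini interchanges between $\eta_1$ (or $\eta$) and the integration over the compact set $V(T)^2$ are justified by joint continuity of the flows. I expect this last combination — the $p$‑adic integral bookkeeping together with keeping the exponents sharp — to be the only slightly delicate part; the conceptual content is entirely contained in the variance computation.
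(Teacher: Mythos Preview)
Your proof is correct and follows essentially the same route as the paper. The paper organises the argument by introducing the auxiliary quantity $E_T(\nu)=\sup_{N_1(\phi_1)\le 1}\big(\int|P_T\phi_1-\nu(\phi_1)|^2\,d\nu\big)^{1/2}$ and splitting into three sub-lemmas: Lemma~\ref{lemmaII.1} gives $\dist_{N_1\otimes N_2}((P_T\otimes\id)^*\eta,\eta_1\otimes\eta_2)\ll E_T(\eta_1)$ (your Cauchy--Schwarz step), Lemma~\ref{lemmaII.2} bounds $|E_T(\eta_1)-E_T(m_1)|$ via the same three-term splitting of $C_{\eta_1,\phi_1}(u)-C_{m_1,\phi_1}(u)$ that you write down, and Lemma~\ref{lemmaII.3} bounds $E_T(m_1)$ using~\eqref{ass:polmix}; the two integral estimates are packaged as Lemma~\ref{l:integral}. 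Your inline variance computation simply merges these three sub-lemmas into a single step without the $E_T$ notation, but the content is the same.
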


\medskip

The proof of this lemma will require new notation. Given a Borel probability measure $\nu$ on $X_1$, which is assumed to be invariant under $h_1$, we define
\[
E_T(\nu): = \sup\left\{ \left( \int_{X_1} |P_T \phi_1 - \nu(\phi_1)|^2 \, d\nu \right)^{1/2}  : \,\, \phi_1 \in \cA_1\;\hbox{ with } N_1(\phi_1) \leq 1 \right\},
\]
for $T > 0$. This expression can also be written in a more convenient form as follows.
Given a function $\phi_1 \in \cA_1$ and a $h_1$-invariant Borel probability measure $\nu$ on $X_1$, 
we define
\[
C_{\nu,\phi_1}(t) = \nu((h_1(t) \cdot \phi_1) \phi_1) - \nu(\phi_1)^2 \quad\quad \textrm{for $t \in \bK$}.
\]
Upon expanding $E_T(\nu)^2$, using that $\nu$ is $h_1$-invariant, one readily sees that
\begin{equation}
\label{expand}
E_T(\nu)^2 = \sup\left\{ \frac{1}{|V(T)|^2} \int_{V(T)} \int_{V(T)}\cC_{\nu,\phi_1}(s-t) \, ds dt  : \,\, \phi_1 \in \cA_1\;\hbox{ with } N_1(\phi_1) \leq 1 \right\}.
\end{equation}

Lemma \ref{lemmaII} is an immediate consequence of the Lemmas \ref{lemmaII.1},
\ref{lemmaII.2}, and \ref{lemmaII.3} that we now prove. 
\begin{lemma}
\label{lemmaII.1}
For all $T > 0$, we have
\begin{equation}
\label{eqlemmaII.1}
\dist_{N_1 \otimes N_2}((P_T\otimes \id)^*\eta,\eta_1 \otimes \eta_2)
\ll 
E_T(\eta_1),
\end{equation}
where the implied constant depends only on the norm $N_2$.
\end{lemma}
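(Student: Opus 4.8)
The plan is to unwind the definition of $\dist_{N_1\otimes N_2}$ and test against an arbitrary elementary tensor. Fix $\phi\in\cA_1\otimes\cA_2$ and write it as a finite sum $\phi=\sum_i\phi_{1i}\otimes\phi_{2i}$ with $\phi_{1i}\in\cA_1$ and $\phi_{2i}\in\cA_2$. Since $P_T$ preserves $\cC_c(X_1)$, we have $(P_T\otimes\id)\phi=\sum_i(P_T\phi_{1i})\otimes\phi_{2i}\in\cC_c(X_1\times X_2)$, so by the definition of the adjoint
\[
\big((P_T\otimes\id)^*\eta\big)(\phi)=\sum_i\eta\big((P_T\phi_{1i})\otimes\phi_{2i}\big),
\]
while $(\eta_1\otimes\eta_2)(\phi)=\sum_i\eta_1(\phi_{1i})\,\eta_2(\phi_{2i})$. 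The first key observation is that, since $\eta_1$ is $h_1$-invariant, $P_T^*\eta_1=\eta_1$, hence $\eta_1(P_T\phi_{1i})=\eta_1(\phi_{1i})$; this is exactly the centering that appears in the definition of $E_T(\eta_1)$.

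Next I would rewrite the $i$-th difference, using that the marginal of $\eta$ on $X_2$ equals $\eta_2$, as
\[
\eta\big((P_T\phi_{1i})\otimes\phi_{2i}\big)-\eta_1(\phi_{1i})\,\eta_2(\phi_{2i})
=\int_{X_1\times X_2}\big(P_T\phi_{1i}(x_1)-\eta_1(\phi_{1i})\big)\,\phi_{2i}(x_2)\,d\eta(x_1,x_2).
\]
Bounding $|\phi_{2i}(x_2)|\le\|\phi_{2i}\|_\infty\ll N_2(\phi_{2i})$ (using that $N_2$ is uniform, which is the only place the implied constant enters), then integrating out $x_2$ and using that the marginal of $\eta$ on $X_1$ equals $\eta_1$, the $i$-th term is bounded by
\[
\ll N_2(\phi_{2i})\int_{X_1}\big|P_T\phi_{1i}(x_1)-\eta_1(\phi_{1i})\big|\,d\eta_1(x_1)
\le N_2(\phi_{2i})\Big(\int_{X_1}\big|P_T\phi_{1i}-\eta_1(\phi_{1i})\big|^2\,d\eta_1\Big)^{1/2}
\]
by the Cauchy--Schwarz inequality. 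By homogeneity of $N_1$ and the definition of $E_T(\eta_1)$ (cf. \eqref{expand}), the last factor is at most $E_T(\eta_1)\,N_1(\phi_{1i})$. Note that this step must go through $E_T(\eta_1)$ rather than estimating a Wasserstein distance directly, since $P_T$ need not preserve the subalgebra $\cA_1$, so $P_T\phi_{1i}$ is only known to lie in $\cC_c(X_1)$.

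Summing over $i$ and taking the infimum over all representations of $\phi$ as a finite sum of elementary tensors yields
\[
\big|\big((P_T\otimes\id)^*\eta-\eta_1\otimes\eta_2\big)(\phi)\big|\ll E_T(\eta_1)\,(N_1\otimes N_2)(\phi),
\]
and taking the supremum over $\phi$ with $(N_1\otimes N_2)(\phi)\le1$ gives \eqref{eqlemmaII.1}, with implied constant depending only on the uniformity constant of $N_2$. There is no serious obstacle here; the two points to watch are the invariance identity $\eta_1(P_T\phi_{1i})=\eta_1(\phi_{1i})$ and the fact that $P_T$ does not preserve $\cA_1$, both of which are handled above.
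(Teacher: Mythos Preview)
Your proof is correct and follows essentially the same approach as the paper: write $\phi$ as a sum of elementary tensors, use the marginal condition to rewrite each term as $\int (P_T\phi_{1i}-\eta_1(\phi_{1i}))\,\phi_{2i}\,d\eta$, bound $|\phi_{2i}|$ by $\|\phi_{2i}\|_\infty\ll N_2(\phi_{2i})$, push the integral down to $\eta_1$, apply Cauchy--Schwarz, and invoke the definition of $E_T(\eta_1)$. Your remarks about why the argument must route through $E_T(\eta_1)$ (since $P_T$ need not preserve $\cA_1$) are apt and make explicit a point the paper leaves implicit.
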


\begin{proof}
Pick $\phi \in \cA_1 \otimes A_2$ and write it as a finite sum of the form
\begin{equation}\label{eq:ppp}
\phi = \sum_{i} \phi_{1i} \otimes \phi_{2i},
\end{equation}
for some $\phi_{1i} \in \cA_1$ and $\phi_{2i} \in \cA_2$. By definition, 
\[
\eta(\psi \otimes 1) = \eta_1(\psi), \quad\quad \textrm{for all $\psi \in \cA_1$}.
\]
We obtain
\begin{eqnarray*}
\big| \eta((P_T \otimes \id)\phi) - (\eta_1 \otimes \eta_2)(\phi) \big| 
&\leq& 
\sum_{i} \eta\left(| \, P_T \phi_{1i} - \eta_1(\phi_{1i}) | \otimes |\phi_{2i}|\right) \\
&\leq& 
\sum_{i} \eta\left(| \, P_T \phi_{1i} - \eta_1(\phi_{1i}) | \otimes \|\phi_{2i}\|_\infty\right) \\
&=&
\sum_{i} \eta_1\left(| \, P_T \phi_{1i} - \eta_1(\phi_{1i}) |\right) \, \| \phi_{2i} \|_\infty \\
&\le &
\sum_{i} \eta_1\left(| \, P_T \phi_{1i} - \eta_1(\phi_{1i}) |^2\right)^{1/2} \, \|\phi_{2i}\|_\infty,
\end{eqnarray*}
where we used H\"older's inequality termwise.
Hence, since $N_2$ is a uniform norm, 
\begin{eqnarray*}
\big| \eta((P_T \otimes \id)\phi) - (\eta_1 \otimes \eta_2)(\phi) \big| 
&\ll &
\sum_{i} \eta_1\left(| \, P_T \phi_{1i} - \eta_1(\phi_{1i}) |^2\right)^{1/2} \, N_2(\phi_{2i}) \\
&\leq &
E_T(\eta_1) \, \left( \sum_{i} N_1(\phi_{1i}) \, N_2(\phi_{2i})\right),
\end{eqnarray*}
where the implied constant depends only $N_2$.
Thus, taking the infimum over all representations of $\phi$ as a finite sum as in 
\eqref{eq:ppp}, we get
\[
\big| \eta((P_T \otimes \id)\phi) - (\eta_1 \otimes \eta_2)(\phi) \big| 
\ll E_T(\eta_1) \, (N_1 \otimes N_2)(\phi).
\]
This completes the proof.
\end{proof}

\begin{lemma}
\label{lemmaII.2}
For all $T \geq 1$, we have
\begin{equation}
\label{eqlemmaII.1}
|E_T(\eta_1) - E_T(m_1)| \ll \sqrt{A}\, |V(T)|^{a/2} \dist_{M_1}(\eta_1,m_1)^{1/2},
\end{equation}
where the implied constant depends only on the norms $M_1$ and $N_1$.
\end{lemma}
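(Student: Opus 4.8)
The plan is to reduce the estimate to a pointwise bound on the difference of correlation functions and then integrate. Since $E_T(\eta_1)$ and $E_T(m_1)$ are non-negative, I would first use $|E_T(\eta_1)-E_T(m_1)|\le |E_T(\eta_1)^2-E_T(m_1)^2|^{1/2}$, so that it suffices to control the difference of the squares; this is the convenient quantity because $E_T(\nu)^2$ is given by the explicit double-integral formula \eqref{expand}. Applying the elementary inequality $|\sup_S f-\sup_S g|\le \sup_S|f-g|$ (both suprema are finite, since the correlations $\cC_{\nu,\phi_1}$ are uniformly bounded by $\ll N_1(\phi_1)^2$), this bounds $|E_T(\eta_1)^2-E_T(m_1)^2|$ by $\sup_{N_1(\phi_1)\le 1}|V(T)|^{-2}\int_{V(T)}\int_{V(T)}\bigl|\cC_{\eta_1,\phi_1}(s-t)-\cC_{m_1,\phi_1}(s-t)\bigr|\,ds\,dt$. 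Everything then comes down to estimating the integrand.

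Next I would bound $\cC_{\eta_1,\phi_1}(u)-\cC_{m_1,\phi_1}(u)$ for a fixed $\phi_1$ with $N_1(\phi_1)\le 1$ and a fixed $u\in\bK$. Writing $\cC_{\nu,\phi_1}(u)=\nu\bigl((h_1(u)\cdot\phi_1)\phi_1\bigr)-\nu(\phi_1)^2$, the difference splits into $(\eta_1-m_1)\bigl((h_1(u)\cdot\phi_1)\phi_1\bigr)$ and $-\bigl(\eta_1(\phi_1)^2-m_1(\phi_1)^2\bigr)$. For the first piece, since $(h_1(u)\cdot\phi_1)\phi_1\in\cA_1$, the definition of the Wasserstein distance gives the bound $M_1\bigl((h_1(u)\cdot\phi_1)\phi_1\bigr)\,\dist_{M_1}(\eta_1,m_1)$; then sub-multiplicativity $M_1\preceq N_1$ together with the polynomial growth hypothesis \eqref{ass:polygrowth} yields $M_1\bigl((h_1(u)\cdot\phi_1)\phi_1\bigr)\ll N_1(h_1(u)\cdot\phi_1)\,N_1(\phi_1)\ll A\max(1,|u|)^a$. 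For the second piece, I would factor $\eta_1(\phi_1)^2-m_1(\phi_1)^2=(\eta_1(\phi_1)+m_1(\phi_1))(\eta_1(\phi_1)-m_1(\phi_1))$, bound the first factor by $\ll\|\phi_1\|_\infty\ll 1$ (uniformity of $N_1$) and the second by $M_1(\phi_1)\,\dist_{M_1}(\eta_1,m_1)\ll\dist_{M_1}(\eta_1,m_1)$. Since $A\ge 1$ and $\max(1,|u|)^a\ge 1$, combining these gives $\bigl|\cC_{\eta_1,\phi_1}(u)-\cC_{m_1,\phi_1}(u)\bigr|\ll A\max(1,|u|)^a\,\dist_{M_1}(\eta_1,m_1)$.

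Finally I would integrate this over $s,t\in V(T)$: the difference $s-t$ ranges over a ball of radius $\ll T$ (radius $2T$ in the archimedean case, radius $T$ in the non-archimedean one), and for $T\ge 1$ one checks that $\max(1,T)\ll|V(T)|$ in both cases, so $\max(1,|s-t|)^a\ll|V(T)|^a$ there. This produces $|E_T(\eta_1)^2-E_T(m_1)^2|\ll A\,|V(T)|^a\,\dist_{M_1}(\eta_1,m_1)$, and taking square roots gives exactly the asserted bound $\sqrt{A}\,|V(T)|^{a/2}\,\dist_{M_1}(\eta_1,m_1)^{1/2}$. The one point I expect to require care is the very first reduction: since $P_T\phi_1$ need not lie in the subalgebra $\cA_1$, one cannot apply $\dist_{M_1}$ to $P_T\phi_1$ directly, and the whole scheme works only because passing to $E_T(\nu)^2$ via \eqref{expand} re-expresses everything through honest products $(h_1(u)\cdot\phi_1)\phi_1$ of elements of $\cA_1$ — precisely the objects that the hypotheses $M_1\preceq N_1$ and \eqref{ass:polygrowth} are tailored to handle; the remaining steps are routine bookkeeping with the norm constants.
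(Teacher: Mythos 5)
Your proposal is correct and follows essentially the same route as the paper: reduce to $|E_T(\eta_1)^2-E_T(m_1)^2|$ via the square-root inequality, expand $E_T(\nu)^2$ through the correlation functions $\cC_{\nu,\phi_1}$, split the difference into the Wasserstein-controlled correlation term and the $\nu(\phi_1)^2$ term, and invoke $M_1\preceq N_1$ together with the polynomial growth hypothesis. The only cosmetic differences are that you replace the paper's pick-a-near-optimal-$\phi_1$-plus-$\eps$ argument with the cleaner inequality $|\sup f-\sup g|\le\sup|f-g|$, and you use the pointwise bound $\max(1,|s-t|)^a\ll|V(T)|^a$ (valid here since $a>0$) rather than the paper's Lemma~\ref{l:integral}, which is stated in the greater generality needed for the $-b$ exponent in Lemma~\ref{lemmaII.3}. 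Your closing remark about $P_T\phi_1$ possibly leaving $\cA_1$, and the expansion \eqref{expand} being exactly what circumvents this, accurately identifies the structural point of the argument.
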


\begin{remark}
Note the change of norms in Lemma \ref{lemmaII.2}: The expression $E_T(\cdot)$ is defined 
using the norm $N_1$ on $\cA_1$, while the asserted bound  is in terms of 
the Wasserstein distance measured with respect to the norm $M_1$. This is the 
only place in our argument where it is necessary to change the norm.
\end{remark}

\begin{proof}
Since $t-s \leq \sqrt{t^2 - s^2}$ for all $0 < s < t$, we have
\begin{equation}
\label{sqbnd}
|E_T(\eta_1) - E_T(m_1)| \leq \sqrt{|E_T(\eta)^2 - E_T(m_1)^2|}.
\end{equation}
Fix $\eps > 0$ and $T \geq 1$. We use \eqref{expand}, and pick $\phi_1 \in \cA_1$ with $N_1(\phi_1) \leq 1$ such that
\[
E_T(\eta_1)^2 \leq \frac{1}{|V(T)|^2} \int_{V(T)} \int_{V(T)}\cC_{\eta_1,\phi_1}(s-t) \, ds dt + \eps.
\]
We note that
\[
E_T(\eta_1)^2 - E_T(m_1)^2 \leq \frac{1}{|V(T)|^2} \int_{V(T)} \int_{V(T)} \big(\cC_{\eta_1,\phi_1}(s-t) -\cC_{m_1,\phi_1}(s-t) \big) \, ds dt + \eps.
\]
By definition, 
\begin{align}
\cC_{\eta_1,\phi_1}(u) -\cC_{m_1,\phi_1}(u)
=&
\big( \eta_1((h_1(u) \cdot \phi_1) \phi_1) - m_1((h_1(u) \cdot \phi_1) \phi_1) \big) \tag{I} \\
&+
\big( m_1(\phi_1)^2 - \eta_1(\phi_1)^2 \big) \tag{II}.
\end{align}
We bound the terms (I) and (II) separately.
Since $N_1(\phi_1) \leq 1$ and $N_1$ is uniform, we have
\begin{align*}
\textrm{(II)}&= (m_1(\phi_1) - \eta_1(\phi_1))(m_1(\phi_1)+ \eta_1(\phi_1))\\
&\ll \dist_{N_1}(\eta_1,m_1)\, 2\|\phi_1\|_\infty \ll \dist_{N_1}(\eta_1,m_1),
\end{align*}
where the implicit constant depends only on $N_1$.
Then since $M_1 \preceq N_1$, by Lemma \ref{lemma:normbnds},
\begin{equation}
\label{eq:II}
\textrm{(II)} \ll \dist_{M_1}(\eta_1,m_1)
\end{equation}
with the implicit constant depending only on $M_1$ and $N_1$.

To estimate the term (I), we recall that by assumption \eqref{ass:polygrowth}, 
\[
N_1(h_1(u) \cdot \phi_1) \le A \max(1,|u|)^a\, N_1(\phi_1), \quad \textrm{for all $u\in\mathbb{K}$},
\]
Hence, since $M_1 \preceq N_1$ and $N_1(\phi_1) \leq 1$, we conclude that for all $u\in\mathbb{K}$,
\begin{align}\label{eq:I}
\textrm{(I)} &\leq \dist_{M_1}(\eta_1,m_1) \, M_1((h_1(u)\cdot  \phi_1)\phi_1) \ll 
\dist_{M_1}(\eta_1,m_1)\,N_1(h_1(u) \cdot \phi_1) N_1(\phi_1) \\
&\le A\, \max(1,|u|)^a \, \dist_{M_1}(\eta_1,m_1),\nonumber
\end{align}
where the implied constant depends only on $M_1$ and $N_1$.
We use that by Lemma \ref{l:integral} below
$$
\int_{V(T)}\int_{V(T)} \max(1,|s-t|)^a\, dsdt\ll |V(T)|^{a+2}
$$
for all $T\ge 1$. Hence, combining the above estimates for (I) and (II),
we deduce that for all $T \geq 1$ (since $A\ge 1$),
\[
E_T(\eta_1)^2 - E_T(m_1)^2 \ll A\, |V(T)|^a \, \dist_{M_1}(\eta_1,m_1) + \eps,
\]
where the implicit constant depend only on $M_1$ and $N_1$. Since $\eps > 0$ is arbitrary, we can 
neglect it and conclude that
\[
E_T(\eta_1)^2 - E_T(m_1)^2 \ll A\, |V(T)|^{a} \dist_{M_1}(\eta_1,m_1).
\]
The same argument can be made with roles of $\eta_1$ and $m_1$ interchanged.
Hence,
\[
|E_T(\eta_1)^2 - E_T(m_1)^2| \ll A\, |V(T)|^{a} \dist_{M_1}(\eta_1,m_1).
\]
Now the lemma follows from \eqref{sqbnd}.
\end{proof}

\begin{lemma}
	\label{l:integral}
	Let $T\ge 1$ if $\bK=\bR$ and $T\ge 1/p$ if $\bK=\bQ_p$. Then
	$$
	I(T):=\int_{V(T)}\int_{V(T)} \max(1,|s-t|)^a\, dsdt\ll |V(T)|^{a+2}
	$$
	uniformly over $a>-1/2$.
\end{lemma}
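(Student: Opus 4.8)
The plan is to handle the Archimedean and non-Archimedean cases separately, in each case reducing the double integral to a one-dimensional integral that can be estimated elementarily, while tracking the dependence on $a$ so that the hypothesis $a>-1/2$ is exactly what delivers uniformity. Throughout, the normalization is that $|V(T)|$ denotes Haar measure, so $|V(T)|=2T$ when $\bK=\bR$ and $|V(T)|=p^{\lfloor\log_p T\rfloor}$ when $\bK=\bQ_p$; in both cases the stated lower bound on $T$ guarantees $|V(T)|\ge 1$.

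For $\bK=\bR$, so that $V(T)=[-T,T]$, I would first note that for each fixed $t\in[-T,T]$ the quantity $u=s-t$ runs over a subinterval of $[-2T,2T]$ as $s$ runs over $[-T,T]$; since the integrand is nonnegative, enlarging the domain gives
\[
I(T)\le 2T\int_{-2T}^{2T}\max(1,|u|)^{a}\,du=4T\Big(1+\frac{(2T)^{a+1}-1}{a+1}\Big).
\]
Now $a>-1/2$ forces $a+1>1/2$, hence $\tfrac{1}{a+1}<2$, and $T\ge 1$ gives $(2T)^{a+1}\ge 1$, so the bracket is at most $1+2(2T)^{a+1}\le 3(2T)^{a+1}$. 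This yields $I(T)\le 12\,T\,(2T)^{a+1}=6\,(2T)^{a+2}=6\,|V(T)|^{a+2}$, with an absolute implied constant.

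For $\bK=\bQ_p$ the set $V(T)$ is an additive subgroup of $\bQ_p$, so translation invariance of Haar measure together with Fubini gives the exact identity $I(T)=|V(T)|\int_{V(T)}\max(1,|u|_p)^{a}\,du$. Writing $|V(T)|=p^{k}$ with $k\ge 0$ and decomposing $V(T)$ into the annuli $\{|u|_p=p^{j}\}$, $j\le k$, each of measure $p^{j}(1-p^{-1})$, the terms with $j\le 0$ contribute exactly $1$ (there the integrand equals $1$), while the terms with $1\le j\le k$ form a geometric sum with ratio $p^{a+1}$. Since $a>-1/2$ gives $p^{a+1}\ge p^{1/2}>1$, this ratio is bounded away from $1$ uniformly in $a$ (and $p$), e.g. $\tfrac{p^{a+1}}{p^{a+1}-1}\le\tfrac{p^{1/2}}{p^{1/2}-1}$, so the sum is $\ll_{p}p^{k(a+1)}=|V(T)|^{a+1}$; combining with the $j\le 0$ contribution and $|V(T)|\ge 1$, one gets $\int_{V(T)}\max(1,|u|_p)^{a}\,du\ll_{p}|V(T)|^{a+1}$ and hence $I(T)\ll_{p}|V(T)|^{a+2}$.

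The only genuine point of care — and the place I would be most attentive — is the uniformity in $a$: a careless estimate would leave a factor $(a+1)^{-1}$ in the real case, or $(p^{a+1}-1)^{-1}$ in the $p$-adic case, and it is precisely the constraint $a>-1/2$ that tames both. Everything else (the domain enlargement in the real case, the translation-invariance reduction and the geometric summation in the $p$-adic case) is routine, so I expect no further obstacles.
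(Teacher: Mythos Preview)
Your proposal is correct. The $p$-adic case is essentially identical to the paper's argument (reduce to a single integral via translation invariance of the group $V(T)$, then sum the geometric series over annuli, using $a>-1/2$ to bound $(p^{a+1}-1)^{-1}$ uniformly). In the real case the paper instead makes the exact change of variables $I(T)=\int_0^{2T}(2T-u)\max(1,u)^a\,du$ and computes the closed form $\tfrac{(2T)^{a+2}}{(a+1)(a+2)}-\tfrac{2aT}{a+1}-\tfrac{a}{2(a+2)}$; your domain-enlargement estimate is cruder but shorter, and the uniformity issue (the factor $(a+1)^{-1}$) is handled identically in both. Either route is fine.
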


\begin{proof}
First, we consider the case when $\bK=\bR$. By a standard change of variables,
$$
I(T)=\int_0^{2T} (2T -u) \max(1,u)^a \, du
$$
A direct computation shows that for all $T\ge 1$,
$$
\int_0^{2T} (2T -u) \max(1,u)^a \, du =
\frac{(2T)^{a+2}}{(a+1)(a+2)}-\frac{2aT}{a+1}-\frac{a}{2(a+2)}.
$$
Hence, $I(T)=O(|V(T)|^{a+2})$ uniformly over $a>-1/2$.

When $\bK=\bQ_p$, we obtain
\begin{align*}
I(p^n)&=\sum_{s,t=0}^{p^n-1}\max(1, p^n|s-t|)^a
=p^n\sum_{u=0}^{p^n-1} \max(1, p^n|u|)^a\\
&= p^n ( (p^n-p^{n-1}) p^{na}+(p^{n-1}-p^{n-2}) p^{(n-1)a}+\cdots+(p-1)p^a+1)
\ll (p^n)^{a+2}\\
&=|V(p^n)|^{a+2}.
\end{align*}
This proves the lemma.
\end{proof}

\begin{lemma}
\label{lemmaII.3}
For all $T \ge w_1^{-1}$, we have
\begin{equation}
\label{eqlemmaII.1}
E_T(m_1) \ll \sqrt{B}\, (w_1 T)^{-b/2},
\end{equation}
where the implied constant is uniform.
\end{lemma}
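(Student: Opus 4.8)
The plan is to estimate $E_T(m_1)^2$ directly from the formula \eqref{expand} together with the polynomial mixing hypothesis \eqref{ass:polmix}. If $\phi_1 \in \cA_1$ satisfies $N_1(\phi_1) \le 1$, then \eqref{ass:polmix} yields $|\cC_{m_1,\phi_1}(u)| \le B \max(1, w_1|u|)^{-b}$ for all $u \in \bK$, so \eqref{expand} gives
\[
E_T(m_1)^2 \le \frac{B}{|V(T)|^2} \int_{V(T)} \int_{V(T)} \max(1, w_1|s-t|)^{-b}\, ds\, dt .
\]
Hence the whole problem reduces to showing that the averaged double integral on the right is $\ll (w_1 T)^{-b}$, with an implied constant independent of $T$, $w_1$, and $b$.

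First I would collapse the double integral to a one-dimensional one. When $\bK = \bR$, a change of variables writes it as $\int_{-2T}^{2T} (2T - |u|) \max(1, w_1|u|)^{-b}\, du$, and bounding the weight $2T - |u|$ by $|V(T)| = 2T$ gives $|V(T)| \int_{V(2T)} \max(1, w_1|u|)^{-b}\, du$; when $\bK = \bQ_p$ and $T$ is taken to be the relevant power of $p$ with $T \ge 1$, the set $V(T)$ is an additive subgroup, so by translation invariance the double integral equals $|V(T)| \int_{V(T)} \max(1, w_1|u|)^{-b}\, du$. In either case, after dividing by $|V(T)|^2$, it suffices to bound $|V(T)|^{-1} \int_{|u| \le cT} \max(1, w_1|u|)^{-b}\, du$ for $c \in \{1,2\}$.

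Next I would split this integral at $|u| = w_1^{-1}$ — which is legitimate precisely because $T \ge w_1^{-1}$ — and estimate each piece as in the proof of Lemma \ref{l:integral}. The part over $|u| \le w_1^{-1}$ contributes $O(w_1^{-1})$; the part over $w_1^{-1} \le |u| \le cT$ contributes $O\big(w_1^{-b} T^{1-b}/(1-b)\big)$, by an elementary integration when $\bK = \bR$ and by summing the geometric series over the annuli $|u| = p^j$ when $\bK = \bQ_p$. Dividing by $|V(T)|$ (which equals $2T$, resp.\ $T$) turns this into $(w_1 T)^{-1} + O\big((w_1 T)^{-b}/(1-b)\big)$; since $0 < b < 1/2$ keeps $1/(1-b)$ bounded, and since $w_1 T \ge 1$ together with $b < 1$ forces $(w_1 T)^{-1} \le (w_1 T)^{-b}$, the averaged double integral is $\ll (w_1 T)^{-b}$. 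Substituting back gives $E_T(m_1)^2 \ll B (w_1 T)^{-b}$, and taking square roots yields the assertion.

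I do not anticipate a genuine obstacle; the only care needed is to keep all implied constants independent of the exponent $b \in (0,1/2)$ and, in the non-Archimedean case, of the residue characteristic $p$, which is automatic on this range since $1/(1-b) \le 2$ and $p^{-(1-b)} \le 2^{-1/2}$, and to carry out the $\bQ_p$ computation with sums over annuli exactly as in Lemma \ref{l:integral} rather than with integrals.
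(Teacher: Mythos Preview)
Your proof is correct and follows essentially the same approach as the paper: both start from \eqref{expand}, apply \eqref{ass:polmix} to bound $|\cC_{m_1,\phi_1}(u)|$, and reduce the question to estimating the averaged double integral $|V(T)|^{-2}\int_{V(T)}\int_{V(T)}\max(1,w_1|s-t|)^{-b}\,ds\,dt$. The only difference is in how this last integral is bounded: the paper rescales by $w_1$ (for $\bK=\bR$ directly, for $\bK=\bQ_p$ after rounding $w_1$ down to a power of $p$) and then invokes Lemma~\ref{l:integral} with exponent $-b$, whereas you collapse to a single integral and split at $|u|=w_1^{-1}$, computing each piece by hand. Your care with the uniformity in $b\in(0,1/2)$ and in $p$ is exactly what is needed, and matches the uniformity that Lemma~\ref{l:integral} provides in the paper's version.
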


\begin{proof}
In view of \eqref{expand}, it suffices to show that for every $\phi_1 \in \cA_1$ with $N_1(\phi_1) \leq 1$ and $T\ge w_1^{-1}$,
we have
\begin{equation}
\label{eq:claim}
\frac{1}{|V(T)|^2} \int_{V(T)} \int_{V(T)}\cC_{m_1,\phi_1}(s-t) \, ds dt \ll B\, (w_1 T)^{-b}.
\end{equation}
By assumption \eqref{ass:polmix}, for all $u\in \mathbb{K}$,
$$
|C_{m_1,\phi_1}(u)|\le B\, \max(1,w_1|u|)^{-b}\, N_1(\phi_1)^2,
$$
so that we obtain
\[
\int_{V(T)} \int_{V(T)}\cC_{m_1,\phi_1}(s-t) \, ds dt
\le
B\, \int_{V(T)} \int_{V(T)}\max(1,w_1 |s-t|)^{-b} \, dsdt.
\]

When $\bK=\bR$, a simple change of variables gives
$$
\int_{V(T)} \int_{V(T)}\max(1,w_1 |s-t|)^{-b} \, dsdt
=w_1^{-2}\int_{V(w_1T)} \int_{V(w_1T)}\max(1,|s-t|)^{-b} \, dsdt,
$$
and estimate \eqref{eq:claim} follows directly from Lemma \ref{l:integral}.

When $\bK=\bQ_p$, we pick $i\ge 1$ such that $p^{-i}\le w_1 \le p^{-i+1}$, and observe that
\begin{align*}
\int_{V(T)} \int_{V(T)}\max(1,w_1 |s-t|)^{-b} \, dsdt
&\le
\int_{V(T)} \int_{V(T)}\max(1,p^{-i} |s-t|)^{-b} \, dsdt\\
&=p^{2i}\int_{V(Tp^{-i})} \int_{V(Tp^{-i})}\max(1,|s-t|)^{-b} \, dsdt.
\end{align*}
Hence, \eqref{eq:claim} follows from Lemma \ref{l:integral}.
\end{proof}

\subsection{Estimating Term (III)} 

\begin{lemma}
\label{lemmaIII}
For all Borel probability measures $\eta_1$ and $m_1$ on $X_1$ and $\eta_2$ and $m_2$ on $X_2$,
we have 
\[
\dist_{N_1 \otimes N_2}(\eta_1 \otimes \eta_2,m_1 \otimes m_2)
\ll 
\max(\dist_{N_1}(\eta_1,m_1), \dist_{N_2}(\eta_2,m_2)),
\]
where the implied constant depends only on the norms $N_1$ and $N_2$.
\end{lemma}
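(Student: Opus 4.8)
The plan is to reduce everything to a single telescoping estimate on elementary tensors. First I would record the basic homogeneity property of the Wasserstein distance: for any uniform norm $M$ on a subalgebra $\cA\subset\cC_c(Y)$ and any $\phi\in\cA$, one has $|\mu(\phi)-\nu(\phi)|\le \dist_M(\mu,\nu)\,M(\phi)$. This is immediate from the definition \eqref{defW} by rescaling $\phi$ when $M(\phi)>0$; when $M(\phi)=0$, uniformity forces $\|\phi\|_\infty=0$, hence $\phi=0$, so both sides vanish. I will apply this below with $M=N_1$ on $\cA_1$ and $M=N_2$ on $\cA_2$.

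Next I would fix $\phi\in\cA_1\otimes\cA_2$ and write it as a finite sum $\phi=\sum_i\phi_{1i}\otimes\phi_{2i}$ with $\phi_{1i}\in\cA_1$ and $\phi_{2i}\in\cA_2$. Then
\[
(\eta_1\otimes\eta_2)(\phi)-(m_1\otimes m_2)(\phi)=\sum_i\big(\eta_1(\phi_{1i})\eta_2(\phi_{2i})-m_1(\phi_{1i})m_2(\phi_{2i})\big),
\]
and I would split each summand as
\[
\eta_1(\phi_{1i})\eta_2(\phi_{2i})-m_1(\phi_{1i})m_2(\phi_{2i})=\big(\eta_1(\phi_{1i})-m_1(\phi_{1i})\big)\eta_2(\phi_{2i})+m_1(\phi_{1i})\big(\eta_2(\phi_{2i})-m_2(\phi_{2i})\big).
\]
Applying the homogeneity property to the two differences, and using that $N_1,N_2$ are uniform norms to bound $|\eta_2(\phi_{2i})|\le\|\phi_{2i}\|_\infty\ll N_2(\phi_{2i})$ and $|m_1(\phi_{1i})|\le\|\phi_{1i}\|_\infty\ll N_1(\phi_{1i})$, each summand is
\[
\ll\max\big(\dist_{N_1}(\eta_1,m_1),\dist_{N_2}(\eta_2,m_2)\big)\,N_1(\phi_{1i})\,N_2(\phi_{2i}),
\]
with an implied constant depending only on $N_1$ and $N_2$.

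Finally I would sum over $i$, take the infimum over all representations of $\phi$ as such a finite sum --- which by definition of the projective tensor product norm produces the factor $(N_1\otimes N_2)(\phi)$ --- and then take the supremum over $\phi$ with $(N_1\otimes N_2)(\phi)\le1$; this yields the claimed inequality. There is no genuine obstacle in this argument: the only point requiring a little care is the passage to the infimum, which is legitimate precisely because the bound obtained for each finite-sum representation has the shape $\mathrm{const}\cdot\sum_i N_1(\phi_{1i})N_2(\phi_{2i})$ with a constant independent of the chosen representation.
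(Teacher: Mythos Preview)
Your proposal is correct and follows essentially the same approach as the paper's proof: the same telescoping decomposition on elementary tensors, the same use of uniformity of $N_1,N_2$ to control $|\eta_2(\phi_{2i})|$ and $|m_1(\phi_{1i})|$, and the same passage to the infimum to obtain $(N_1\otimes N_2)(\phi)$. The only difference is cosmetic --- you isolate the homogeneity property $|\mu(\phi)-\nu(\phi)|\le\dist_M(\mu,\nu)\,M(\phi)$ as a preliminary observation, whereas the paper uses it implicitly.
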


\begin{proof}
Pick $\phi \in \cA_1 \otimes \cA_2$, and write it as a finite sum of the form
\[
\phi = \sum_i \phi_{1i} \otimes \phi_{2i},
\]
for some $\phi_{1i} \in \cA_1$ and $\phi_{2i} \in \cA_2$. We note that
\[
(\eta_1 \otimes \eta_2)(\phi) - (m_1 \otimes m_2)(\phi)
=
\sum_i \big( \eta_1(\phi_{1i})\eta_2(\phi_{2i}) - m_1(\phi_{1i}) m_2(\phi_{2i}) \big).
\]
Each term in this sum can be written as
\[
(\eta_1(\phi_{1i}) - m_1(\phi_{1i})) \eta_2(\phi_{2i}) + m_1(\phi_{1i}) (\eta_2(\phi_{2i}) - m_2(\phi_{2i})),
\]
and thus its absolute value can be estimated from above by
\[
\dist_{N_1}(\eta_1,m_1) N_1(\phi_{1i})\, \|\phi_{2i}\|_\infty + \|\phi_{1i}\|_\infty\,\dist_{N_2}(\eta_2,m_2)  N_2(\phi_{2i}).
\]
Since $N_1$ and $N_2$ are uniform norms, we conclude that
\[
|(\eta_1 \otimes \eta_2)(\phi) - (m_1 \otimes m_2)(\phi)|
\ll 
\max(\dist_{N_1}(\eta_1,m_1), \dist_{N_2}(\eta_2,m_2)) \sum_i N_1(\phi_{1i})N_2(\phi_{2i}),
\]
where the implied constant depends only on $N_1$ and $N_2$. 
Hence,
\[
|(\eta_1 \otimes \eta_2)(\phi) - (m_1 \otimes m_2)(\phi)|
\ll 
 \max(\dist_{N_1}(\eta_1,m_1), \dist_{N_2}(\eta_2,m_2)) (N_1\otimes N_2)(\phi).
\]
Since $\phi$ is arbitrary, this implies the lemma.
\end{proof}

From Lemma \ref{lemmaIII}, we also deduce

\begin{corollary}
	\label{c_lemmaIII}
	For all Borel probability measures $\eta_1$ and $m_1$ on $X_1$ and $\eta_2$ and $m_2$ on $X_2$,
	we have 
	\[
	\dist_{N_1 \otimes N_2}(\eta_1 \otimes \eta_2,m_1 \otimes m_2)
	\ll 
	\max(\dist_{M_1}(\eta_1,m_1)^{1/2}, \dist_{M_2}(\eta_2,m_2)^{1/2}),
	\]
	where the implied constant depends on the norms $M_1,N_1,M_2$ and $N_2$.
\end{corollary}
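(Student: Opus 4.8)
The plan is to derive Corollary \ref{c_lemmaIII} directly from Lemma \ref{lemmaIII} by trading one factor of the Wasserstein distance for its square root, using only that the norms $N_i$ are uniform and that $M_i \preceq N_i$. Indeed, Lemma \ref{lemmaIII} already bounds $\dist_{N_1 \otimes N_2}(\eta_1 \otimes \eta_2, m_1 \otimes m_2)$ by $\max(\dist_{N_1}(\eta_1, m_1), \dist_{N_2}(\eta_2, m_2))$, so it will suffice to prove that for $i = 1,2$ one has $\dist_{N_i}(\eta_i, m_i) \ll \dist_{M_i}(\eta_i, m_i)^{1/2}$ with implied constants depending only on $M_i$ and $N_i$.

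First I would record two elementary upper bounds for $\dist_{N_i}(\eta_i, m_i)$. Since $N_i$ is a uniform norm on $\cA_i$, there is a constant $F_i > 0$ with $\|\phi\|_\infty \le F_i N_i(\phi)$ for all $\phi \in \cA_i$; testing the definition of the Wasserstein distance against functions $\phi$ with $N_i(\phi)\le 1$ then gives the crude bound $\dist_{N_i}(\mu, \nu) \le 2 F_i$ for \emph{all} Borel probability measures $\mu,\nu$ on $X_i$. On the other hand, the relation $M_i \preceq N_i$ (part of the standing assumption \eqref{ass:norms}) supplies a constant $D_i > 0$ with $M_i \le D_i N_i$, and Lemma \ref{lemma:normbnds} then yields the comparison bound $\dist_{N_i}(\eta_i, m_i) \le D_i \dist_{M_i}(\eta_i, m_i)$.

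The key step is to combine these two bounds by a geometric-mean argument: writing $\dist_{N_i}(\eta_i, m_i) = \dist_{N_i}(\eta_i, m_i)^{1/2}\,\dist_{N_i}(\eta_i, m_i)^{1/2}$, bounding the first factor by $(2F_i)^{1/2}$ and the second by $\big(D_i \dist_{M_i}(\eta_i, m_i)\big)^{1/2}$ gives exactly
\[
\dist_{N_i}(\eta_i, m_i) \le (2 F_i D_i)^{1/2}\, \dist_{M_i}(\eta_i, m_i)^{1/2}.
\]
Plugging this into the bound from Lemma \ref{lemmaIII} produces the asserted inequality, with an implied constant depending only on $M_1, N_1, M_2, N_2$.

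I do not anticipate any genuine obstacle here; the statement is essentially a bookkeeping corollary. The only point that requires a moment's thought is recognising that one should \emph{not} attempt to remove the exponent $1/2$ (a bound $\dist_{N_i}\ll\dist_{M_i}$ would of course also follow from $M_i\preceq N_i$, but it is the square-root form that is needed later, e.g. in the proof of Corollary~\ref{c_lemmaIII}'s intended applications where $\dist_{M_i}$ enters as a square), and to see that multiplying the crude uniform bound against the comparison bound is exactly what delivers the square root cleanly and uniformly over all couplings.
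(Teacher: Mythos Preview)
Your proof is correct and follows essentially the same approach as the paper: apply Lemma~\ref{lemmaIII}, use $M_i\preceq N_i$ together with Lemma~\ref{lemma:normbnds} to pass from $\dist_{N_i}$ to $\dist_{M_i}$, and use uniformity of the norms to trade a full power for a square root via the geometric-mean trick. The only cosmetic difference is that the paper uses uniformity of $M_i$ (bounding $\dist_{M_i}\le 2F_i$) rather than of $N_i$, which is immaterial since both are assumed uniform.
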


\begin{proof}
Since $M_1 \preceq N_1$ and $M_2 \preceq N_2$, there are constants $E_1,E_2>0$
such that the bounds $M_i \leq E_i\, N_i$ hold on $\cA_i$, $i=1,2$. By Lemma \ref{lemma:normbnds}, we have
\[
\dist_{N_i}(\eta_i,m_i) \leq E_i \dist_{M_i}(\eta_i,m_i), \quad \textrm{for $i = 1,2$}.
\]
Hence, it follows from Lemma \ref{lemmaIII} that 
\begin{equation}
\label{eq:est}
\dist_{N_1 \otimes N_2}(\eta_1 \otimes \eta_2,m_1 \otimes m_2)
\ll
\max(\dist_{M_1}(\eta_1,m_1), \dist_{M_2}(\eta_2,m_2)).
\end{equation}
Finally, note that since $M_1$ and $M_2$ are uniform norms, there are constants $F_1, F_2 > 0$ such that for all $\phi_i\in \cA_i$,
\[
\|\phi_i\|_\infty \leq F_i\, M_i(\phi_i), \quad \textrm{for $i = 1,2$},
\]
and thus $\dist_{M_i}(\cdot,\cdot) \leq 2F_i$ on $\cP(X_i)$ for $i = 1,2$. Hence, \[
\dist_{M_i}(\cdot,\cdot) \leq \sqrt{2F_i} \dist_{M_i}(\cdot,\cdot)^{1/2},
\]
on $\cP(X_i)$ for $i = 1,2$.
These estimates combined with \eqref{eq:est} finish the proof.
\end{proof}

\subsection{Completion of the proof of Proposition \ref{mainest}}

Combining the bounds on 
Term I, Term II and Term III, which Lemma \ref{lemmaI}, Lemma \ref{lemmaII} and Corollary \ref{c_lemmaIII} 
respectively provide, we deduce that for all $T\in [w_1^{-1},w_2^{-1}]$,
\[
\dist_{N_1 \otimes N_2}(\eta,m_1 \otimes m_2)
\ll
\max(w_2 T, |V(T)|^{a/2} \dist_{M_1}(\eta_1,m_1)^{1/2},(w_1 T)^{-b/2},\dist_{M_2}(\eta_2,m_2)^{1/2}),
\]
where the implied constant depends only on the constants $A, B, C$ and on the norms $M_1,N_1,M_2$ and $N_2$. If we set
\[
\mathcal{M} = \max(\dist_{M_1}(\eta_1,m_1),\dist_{M_2}(\eta_2,m_2)),
\]
then we can merge the $\dist_{M_i}$-terms above (provided that $T \geq 1$) into $\mathcal{M}$, and thus
\[
\dist_{N_1 \otimes N_2}(\eta,m_1 \otimes m_2)
\ll
\max(w_{2} T, |V(T)|^{a/2} \sqrt{\mathcal{M}},(w_1 T)^{-b/2}),
\]
for all $T \in [w_1^{-1},w_2^{-1}]$. This finishes the proof of Proposition \ref{mainest}.

\end{document}